\renewcommand\Re{\mathop{{\rm Re}}}
\renewcommand\Im{\mathop{{\rm Im}}}
\newcommand{\Db}{{\mathbb D}}
\newcommand{\Cb}{{\mathbb C}}
\newcommand{\Ib}{{\mathbb I}}
\newcommand{\Kb}{{\mathbb K}}
\newcommand{\Nb}{{\mathbb N}}
\newcommand{\Rb}{{\mathbb R}}
\newcommand{\Sb}{{\mathbb S}}
\newcommand{\Xb}{{\mathbb X}}
\newcommand{\Zb}{{\mathbb Z}}
\newcommand{\calA}{\mathcal{A}}
\newcommand{\calB}{\mathcal{B}}
\newcommand{\calC}{\mathcal{C}}
\newcommand{\calD}{\mathcal{D}}
\newcommand{\calE}{\mathcal{E}}
\newcommand{\calG}{\mathcal{G}}
\newcommand{\calJ}{\mathcal{J}}
\newcommand{\calK}{\mathcal{K}}
\newcommand{\calL}{\mathcal{L}}
\newcommand{\calM}{\mathcal{M}}
\newcommand{\calR}{\mathcal{R}}
\newcommand{\calS}{\mathcal{S}}
\newcommand{\calT}{\mathcal{T}}
\newcommand{\calU}{\mathcal{U}}
\newcommand{\scrL}{\mathscr{L}}
\newcommand{\scrS}{\mathscr{S}}
\def\colvec[#1,#2]{\begin{bmatrix} #1 \\ #2 \end{bmatrix}}
\def\rowvec[#1,#2]{\begin{bmatrix} #1 & #2 \end{bmatrix}}
\def\ip<#1,#2>{\left\langle #1,#2 \right\rangle}
\newcommand{\norm}[1]{\left\Vert #1 \right\Vert}
\newcommand{\cl}{\mathop{\mathrm{cl}}}
\newcommand{\Der}{\mathop{\mathrm{Der}}}
\newcommand{\HR}{H\kern-2pt R}
\newcommand{\Rad}{\mathop{\mathrm{Rad}}}
\newcommand{\spann}{\mathop{\mathrm{span}}}
\newcommand{\trace}{\mathop{\mathrm{trace}}}
\newcommand{\Wr}{\mathop{\mathrm{Wr}}}
\newcommand{\st}{\,:\,}
\newtheorem{thm}{Theorem}[section] 
\newtheorem*{thm*}{Theorem}
\newtheorem{cor}[thm]{Corollary}
\newtheorem*{cor*}{Corollary}
\newtheorem{lem}[thm]{Lemma}
\newtheorem{prop}[thm]{Proposition}
\newtheorem*{con*}{Conjecture}
\newtheorem*{prob*}{Problem}
\theoremstyle{definition}
\newtheorem{defn}[thm]{Definition}
\theoremstyle{remark}
\newtheorem{rem}[thm]{Remark}
\newtheorem{ex}[thm]{Example}
\numberwithin{equation}{section}
\begin{document}
\title{Linear systems, spectral curves and determinants}

\author{Gordon Blower}
\email{g.blower@lancaster.ac.uk}
\address{School of Mathematical Sciences, Lancaster
University, Lancaster LA1 4YF, United Kingdom}

\author[Ian Doust]{Ian Doust}
\email{i.doust@unsw.edu.au}
\address{School of Mathematics and Statistics, University of New South Wales, Sydney, NSW 2052, Australia}

\subjclass[2010]{Primary 47C05; secondary 34B30, 58B34}

\date{23 September 2024}

\begin{abstract} 

 Let $(-A,B,C)$ be a continuous time linear system with state space a separable complex Hilbert space $H$, where $-A$ generates a strongly continuous contraction semigroup $(e^{-tA})_{t\geq 0}$ on $H$, and $\phi (t)=Ce^{-tA}B$ is the impulse response function. Associated to such a system is a Hankel integral operator $\Gamma_\phi$ acting  on $L^2((0, \infty );\Cb )$ and a Schr{\"o}dinger operator whose potential is found via a Fredholm determinant by the Faddeev-Dyson formula.  Fredholm determinants of products of Hankel operators also play an important role in the Tracy and Widom's theory of matrix models and asymptotic eigenvalue distributions of random matrices. 
This paper provide formulas for the Fredholm determinants which arise thus, and determines consequent properties of the associated differential operators. We prove a spectral theorem for self-adjoint linear systems that have scalar input and output: the entries of Kodaira's characteristic matrix are given explicitly with formulas involving the infinitesimal Darboux addition for $(-A,B,C)$.
Under suitable conditions on $(-A,B,C)$ we give an explicit version of Burchnall-Chaundy's theorem, showing that the algebra generated by an associated family of differential operators is isomorphic to an algebra of functions on a particular hyperelliptic curve.  
\end{abstract}

\keywords{Keywords: spectral measures; Hankel operators; cyclic theory}
\maketitle

\section{Introduction}\label{S:Intro}

Fredholm determinants and determinants of multiplicative commutators arise in the theory of continuous-time linear systems $(-A,B,C)$ with state space $H$, a complex separable Hilbert space. Associated to each such system is its impulse response (or scattering) function $\phi(t) = C e^{-tA}B$ and a corresponding Hankel operator $\Gamma_\phi$ acting on $L^2((0,\infty);\Cb)$.

We are motivated here by applications from random matrix theory and the spectral theory of Schr\"odinger operators.
A central role is played by the tau function for the linear system. Writing $\phi_{(x)}(t) = \phi(t+2x)$, we define the tau function by the Fredholm determinant $\tau(x) = \det(I + \Gamma_{\phi_{(x)}})$. 
The tau function provides a connection between the potential in Schr\"odinger's equation.
Starting from a Schr\"odinger equation $-f'' + uf = \lambda f$, Faddeev and Dyson showed that under suitable hypotheses, given its scattering function $\phi$ one could recover the potential $u$ via the formula $u(x) = -2 \frac{d^2\ }{dx^2} \log \tau(x)$.

Figure~\ref{relations} expresses the connections between various functions and key equations in the theory. The left-hand column involves Hankel operators built from the scattering data. The scattering function $\phi$ is said to realised by a linear system if there exists $(-A,B,C)$ such that $\phi (x)=Ce^{-xA}B;$ in the terminology of linear systems, this is the impulse response function. Scattering functions can be so realised under hypotheses discussed in \cite{EM}, \cite{McK} and \cite{B1}, as in Example \ref{scatteringex}.

Under mild hypotheses, we show in Theorem \ref{Lyapunovthm} that $R_x=\int_x^\infty e^{-tA}BCe^{-tA} \,dt$ gives a family of trace class operators on $H$ such that $\det (I+R_0)=\det (I+\Gamma_\phi )$. The $R_x$ may be defined as compositions of shifted reachability and controllability operators as in \cite[p.~318]{B1}, although it is more illuminating to observe that they satisfy Lyapunov's equation 
\begin{align}\label{Lyapunov}
    {\frac{dR_x}{dx}}                   &=-AR_x-R_xA\nonumber\\
     \Bigl({\frac{dR_x}{dx}}\Bigr)_{x=0}&=-BC.
\end{align} 
This opens the route towards a family of algebraic identities linking various operators.
The middle column gives the route from the linear system down to the solution of the corresponding Schr\"odinger equation via the Gelfand-Levitan equation (\ref{gelfandlevitanequation}), which we solve using the integral kernel $T_{GL}$. The formula $T_{GL}(x,y)=-Ce^{-xA}(I+R_x)^{-1}e^{-yA}B$ motivates the choice of $R_x$. 

On the right, we introduce the xi function and diagonal Green's function, which are closely related to the spectral data and determine those equations that are integrable. Formal definitions of these functions are given below.

\newcommand{\DGen}{\vbox to 9mm{
  \hbox{diagonal Green's} 
  \hbox{\ \ \ $G(x,x;\lambda)$} }}
\newcommand{\Daren}{\vbox to 9mm{
  \hbox{\ \ \ \ Darboux} 
  \hbox{$(d/dx)G(x,x;\lambda)$} }}

\begin{figure}\label{relations}
\begin{tikzcd}[column sep=huge]
\hbox{scattering $\phi$}  \arrow[d, "\hbox{det}"] 
  & (-A,B,C)  \arrow[l]{} \arrow[r]{}  
     \arrow[d,"\hbox{\small Lyapanov eqn}"] 
  & \hbox{$\xi$ function} \\
\tau   \arrow[d]{} 
  & R_x  \arrow[l,"\hbox{\small Gelfand--Levitan}" below]
      \arrow[r, "\lfloor \cdot \rfloor" below]
      \arrow[d]{}
  & \DGen  \arrow[d]{} \\
  \hbox{potential $u$} \arrow[d]{}
   & T_{GL}(x,y) \arrow[d]{}
   & \Daren \\
   \hbox{Schr{\"o}dinger operator} & \hbox{Baker--Ahkiezer} & 
\end{tikzcd}
\caption{}
\end{figure}

Tracy and Widom \cite{TW} introduced a systematic theory of matrix models involving Fredholm determinants of products of Hankel operators. Unfortunately  the space of Hankel integral operators does not have an obvious multiplicative structure, which inhibits calculations. In previous papers \cite{B1},\cite{B2},\cite{BN} and \cite{BD}, we introduced various tools to reveal the latent algebraic structure of the Hankel operators. In particular we introduced a differential ring of operators $\calE$ on $H$, which in general is not commutative and not self-adjoint, but which does provide a tool for computing the Fredholm determinant of the Hankel integral operators. There is a bracket operator $\lfloor\, \cdot\, \rfloor$ taking $\calE$ to complex functions on $(0, \infty )$.

In section \ref{S:Linearsystems}, we recall some basic results concerning continuous-time linear systems and associated Hankel operators.  
We develop ideas from \cite{BD}, and obtain Fredholm determinants via linear systems that have input and output space $\Cb$. In Proposition \ref{diagonaldiff}, we obtain a sufficient condition for an integral kernel to be the product of Hankel operators.  

In \cite{TW}, many formulas involve products of Hankel integral operators that are associated with first order linear ordinary differential equations with $2\times 2$ matrix coefficients. In section \ref{S:Intkernels}, we show differential equations produce products of Hankel operators and compare some associated functions such as De Branges's phase function with the tau function of a linear system.  
 Products of Hankel operators arise as noncommutative differentials, and so we begin section~\ref{S:Linearsystems} 
 with a discussion of this and the role that the ring of stable rational functions plays in the theory.
Some of our computations for products of Hankel operators involve the diagonal derivative of integral kernels. In section~\ref{S:Extensions} we provide a framework for considering these objects which we use in the subsequent parts of the paper. 

Let $(-A,B,C)$ be a linear system as above and let $\calL^1(H)$ denote the ideal of trace class operators on $H$. A central role in the theory is played by the subalgebra $\calE_0$ of $\calL (H)$ that is generated by $I, BC$ and $(\lambda I-A)(\lambda I+A)^{-1}$ and the corresponding quotient algebra $\calA_0=(\calE_0+\calL^1(H))/\calL^1(H)$. The algebra $\calA_0$ is commutative and unital, and hence may be regarded as a space of functions on its maximal ideal space $\Xb$. For the specific examples we have in mind, it is helpful to regard $\Xb$ as a spectral curve. In particular cases, we can identify $\Xb$ with a curve in the sense of algebraic geometry, and introduce suitable tau functions over the curve as in \cite{Mum} and \cite{Mum2}. These ideas, which lead to a number of determinant formulas, are examined in section~\ref{S:LinearsystemsFredholm}.

In section~\ref{S:HowlandFredholm} we examine a important family of examples of $(-A,B,C)$ which come from Howland operators on $L^2((0,\infty);\Cb)$. Our results explain why the spectral theory of Howland operators is closely linked to that of Hankel integral operators and Schr\"odinger operators. We show that the systems which come from these operators satisfy the hypotheses of the results in section~\ref{S:LinearsystemsFredholm} and hence give specific examples of the determinant formulas obtained.

Associated to $(-A,B,C)$ is family of linear systems
\begin{equation}
   (-A,B,C) \mapsto \Sigma_\zeta =(-A, (\zeta I+A)(\zeta I-A)^{-1}B,C)\qquad (\vert \arg (-\zeta )\vert <\pi /2).
\end{equation} 
which are related to the original $(-A,B,C)$ by the process of Darboux multiplication, as in \cite{BN} and \cite[Section 2]{BD}. 
 As discussed in \cite[Proposition 2.5]{BD} and \cite{McK}, the diagonal Green's function (\ref{diagonalgreen}) describes the infinitesimal Darboux addition.  We then introduce the potential $u(x)=-2{\frac{d^2}{dx^2}}\log\tau (x)$ and the Schr\"odinger differential operator $L=-{\frac{d^2}{dx^2}}+u$.
Taking $\psi_\zeta$ to be a positive solution in $L^2((0,\infty);\Cb)$ of 
$L \psi= -\zeta^2 \psi$,
which is unique up to positive multiples, the effect of replacing $\Sigma_\infty$ by $\Sigma_\zeta$ is replacing 
$u \mapsto u-2{\frac{d^2}{dx^2}}\log \psi_\zeta (x)$.
Ercolani and McKean \cite{EM} considered potentials on a multiplier curve, possibly of an exotic kind, and associated each potential with a divisor. The addition rule for divisors is associated with the Darboux addition of potentials, and the additive class consists of those potentials that arise from Darboux addition as in \cite{McK}. 
In section~\ref{S:xi} we show that the operator $(\lambda I-L)^{-1}$ may be expressed via a Green's function $G(x,y;\lambda )$ (see Equation~(\ref{GreenWronskian})), and the diagonal $G(x,x;\lambda )$ may be expressed in terms of the linear system (Equation~(\ref{diagonalgreen})). 
The infinitesimal version of Darboux addition is given by the diagonal Green's function via $X(u)=-2(d/dx) G(x,x;\lambda )$, as discussed in \cite[Proposition 2.5]{BD} and \cite{McK}.

In the context of Schr\"odinger's equation on the real line, Gesztesy and Simon \cite{GS} introduced the xi function and established the formula $\xi (x;\lambda )=(1/\pi)\Im \log (-G(x,x;\lambda ))$. The xi function is a convenient intermediary between the scattering data and the potential and can be used to solve the inverse spectral problem. These connections are pursued in section~\ref{S:xi}.

In section~\ref{S:Spectral} we give a sufficient condition for $(-A,B,C)$ to produce an integrable Schr\"odinger equation, and
provide an explicit version of Burchnall--Chaundy's theorem, showing how a potential that satisfies the finite stationary KdV hierarchy gives a spectral curve $\Xb$ of finite genus $\ell$. The proof uses identities which originate from Lyapunov's equation (\ref{Lyapunov}).


\section{Linear systems and Hankel operators}\label{S:Linearsystems}

Throughout, let $H$ be a separable complex Hilbert space, regarded as the state space. We write $H'=\calL (H;\Cb )$. Let $\calL (H)$ be the algebra of bounded linear operators on $H$ with the operator norm and the adjoint $T\mapsto T^\dagger$. We shall let $\calL^2(H)$ denote the space of Hilbert-Schmidt operators on $H$, which is a Hilbert space when equipped with the usual inner product $\langle K,L\rangle = \trace(KL^\dagger)$. Let $\calL^1(H)$ denote the space of trace class operators. 

Let $H_0$ be another complex Hilbert space, which serves as the input and output space for our linear system $(-A,B,C)$, so that the input operator is the bounded linear operator $B: H_0\rightarrow H$ and the output operator is the bounded linear operator $C:H\rightarrow H_0$. Let $-A$ be the infinitesimal generator of a strongly continuous $(C_0)$ semigroup $(e^{-tA})_{t\geq 0}$ of linear contractions on $H$. Let $\calD (A)$ be the domain of $A$, which is a dense linear subspace of $H$ and itself a Hilbert space for the graph norm $\Vert f\Vert_{{\calD}(A)}^2=\Vert f\Vert^2_H+\Vert Af\Vert^2_H$. 

For bounded input $u:[0, \infty )\rightarrow H_0$, output $y:[0, \infty )\rightarrow H_0$ and state $x:[0, \infty )\rightarrow H$, the continuous time linear system is governed by the ordinary differential equation
\begin{align}\label{linsystemsODE}
   {\frac{dx}{dt}}&=-Ax+Bu\nonumber\\
                 y&=Cx\nonumber\\
              x(0)&=0.
\end{align}
The impulse response function is $\phi (t)=Ce^{-tA}B$, which gives a weakly continuous function $(0, \infty )\rightarrow \calL (H_0)$, also known as the scattering function. (In many cases we shall take $H_0$ to be $\Cb$ and so $\phi$ is scalar-valued.)

\begin{defn}
Suppose that $\phi\in L^2((0, \infty );\calL(H_0))$.  The Hankel integral operator with impulse response function $\phi$ is
 \begin{equation}\label{Hankelint} 
   \Gamma_\phi f(x) = \int_0^\infty \phi (x+t) f(t)\, dt
                \qquad (f : (0, \infty ) \to H_0 ).
  \end{equation}
\end{defn}

If $\Gamma_\phi$ is trace class on $L^2((0, \infty ); H_0 )$, then the Fredholm determinant $\det (I+\Gamma_\phi )$ is defined, and we shall be concerned with the problem of computing this determinant in cases of interest. 

Given a complex unital algebra $\calR$, we say that $H$ is a (left) Hilbert module if there is a unital homomorphism $\Phi :\calR\rightarrow \calL(H)$ which provides a natural pairing $(a, \eta )\mapsto \Phi (a)\eta$ for all $a\in \calR$, $\eta\in H$; see \cite{DP}. The crucial step is to introduce a differential ring of operators on $H$. 
 Our results on commutative algebras of ordinary differential operators are also related to those of \cite{DGP} and \cite{SW}, except that we use $(-A,B,C)$ as the fundamental datum instead of Grassmannian of subspaces of Hilbert space. Via (\ref{Lyapunov}) we address the functional identities more directly.

Now we consider the stable rational functions, which are fundamental to linear systems theory as in \cite{V} and \cite{vdPS}. Suppose that $(-A,B,C)$ is a linear system with finite-dimensional state space $H$. Then the linear system is effectively described by the transfer function 
  \[ T (s)=D+C(sI+A)^{-1}B, \]
which is a proper rational function, as in \cite{B3}. If $(e^{-tA})_{t\geq 0}$ is of exponential decay, then $T(s)$ is a stable rational function. Let $\scrS$ be the ring of stable rational functions and $\calR$ the subring of $\scrS$ given by  $\calR= \Cb [\lambda ]$ where $\lambda =1/(1+s)$. Both $\calR$ and $\calS$ are principal ideal domains, so every prime ideal is maximal and they have Krull dimension one.  
The quotient field of $\calR$ is equal to $\Cb (s)$. The state space $H$ is then a finitely-generated torsion module over the principal ideal domain $\calR$, and hence has a Jordan decomposition. The Laplace transformation of these variable is simple. Starting from the formula $n!/(1+s)^{n+1}=\int_0^\infty t^{n}e^{-(s+1)t}dt$, we can conveniently express elements of $\calR$ in term of the Laguerre basis of $L^2((0, \infty ); \Cb) $; see \cite{B3} and \cite[(5.36)]{TW}. In Lemma~\ref{boundedHankel}, we obtain conditions for Hankel operators on $L^2((0, \infty ); \Cb )$, defined via the Laplace transform, to be bounded or Hilbert-Schmidt. In Proposition~\ref{diagonaldiff} we obtain a characterization of products of Hankel operators in terms of P\"oppe's semi-additive operators; see \cite{Po}. These are interesting in their own right, and lead to cocycle formulas as in Theorem~\ref{Toeplitzcocycle}.
Conversely, Fuhrmann's theorem \cite[p.~32]{Fu} shows that any proper rational function on $\Cb_\infty$ arises as a transfer function with finite-dimensional state space. 

We now consider the consequences for operators. 

\begin{defn}
For $h \in L^2((0, \infty );\Cb)$ we shall let
$\scrL h(s)= {\hat h}(s) = \int_0^\infty e^{-st}h(t)\, dt$, $\Re s > 0$, denote the Laplace transform of $h$. 
\end{defn}

\begin{defn}\label{Dirichletalgebra} 
\begin{enumerate}[(i)]
\item Let ${\calL}(H)$ be the algebra of bounded linear operators on $H$ with the operator norm and adjoint $T\mapsto T^\dagger$. 
\item  Let  ${\calL}^2(H)$ be the space of Hilbert-Schmidt operators on $H$ with the usual inner product $\langle K,L\rangle = \trace(KL^\dagger)$. 
\item Let $\calS$ be a domain in $\Cb$ with a $C^\infty$ smooth boundary $\partial\calS$, and let $ds$ be the arc\-length measure on $\partial\calS$. Let $\calD_\calS$ be the subalgebra of $L^\infty(\partial \calS;\Cb)$   
such that the norm
  \begin{equation}\label{Dirichlet} 
  \Vert h\Vert^2_{\calD} 
    = 
       \norm{h}_\infty^2
       +\iint_{\partial\calS  \times \partial\calS } 
         {\frac{\vert h(x)-h(y)\vert^2}{\vert x-y\vert^2}} {\frac{ds(x)ds(y)}{\pi}}
  \end{equation}
is finite. Saitoh \cite{S} section 5 discussed equivalent conditions for finiteness of this integral. 
\item  For $h \in L^\infty(\partial\calS ; \Cb))$ let $M_h \in \calL(L^2(\partial\calS ;\Cb )$ denote the operator of multiplication by $h$, namely $f\mapsto hf.$
\end{enumerate}
\end{defn}

\begin{defn} Let $V_1,V_2$ be a vector subspaces of $\calL (H)$. 
\begin{enumerate}[(i)]
\item We write $V_1^\dagger =\{ X^\dagger: X\in V_1\}$ and say that $V_1$ is self-adjoint if $V_1^\dagger= V_1$.
\item With $[a,b]=ab-ba$ for the additive commutator, we write $[V_1,V_2]=\spann \{ [a,b]: a\in V_1, b\in V_2\}$ for the commutator subspace.
\end{enumerate}
\end{defn}

The Dirichlet integral makes precise the heuristic formula $\int_{-\infty}^\infty \vert\kappa\vert e^{-i\kappa x}d\kappa =-2/x^2$ in formulas (\ref{DouglasDirichlet}) and (\ref{DirichletRS}).

Since ${\calL}^2(H)$ is an ideal in ${\calL}(H)$ under composition of operators, We observe that 
\[ {\calA}=\Biggl\{ \begin{bmatrix} U&V\\ X&Y\end{bmatrix}:  U,Y\in {\calL}(H); V,X\in {\calL}^2(H)\Biggr\}\]
is an algebra under the usual multiplication.

\begin{prop} Let $\calS =\{ z\in \Cb: \Re z>0\}$ be the right half plane with boundary $i\Rb$, let $\calD = \calD_\calS$, and let $H = L^2(i\Rb ;\Cb )$. Given $h \in H$, let $\phi_h:(0,\infty) \to \Cb$ be its Fourier transform, $\phi_h (\kappa ) = \int_{-\infty}^\infty h(iy) e^{-i\kappa y} \,dy$.  
\begin{enumerate}[(i)]
\item The map $\calD \rightarrow {\calA}$, given by
   \[ f\mapsto \begin{bmatrix} M_f& D\circ d f\\ 0&M_f\end{bmatrix}\]
is a algebra homomorphism.
\item The map $h \mapsto \Gamma_{\phi_h}$ (as in (\ref{Hankelint}) is a bounded linear map from $\calD$ to  ${\calL}^2(L^2((0, \infty );\Cb) )$. 
\end{enumerate}
\end{prop}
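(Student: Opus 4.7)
For (i), I would multiply the two matrices $\Phi(f)$ and $\Phi(g)$ (writing $\Phi(f)$ for the proposed image of $f$) and verify the homomorphism property entry by entry. Direct computation gives
$$\Phi(f)\Phi(g)=\begin{bmatrix}M_fM_g & M_f(D\circ dg)+(D\circ df)M_g\\ 0 & M_fM_g\end{bmatrix}.$$
The diagonal entries collapse to $M_{fg}$ because $f\mapsto M_f$ is multiplicative on $L^\infty(\partial\calS;\Cb)$. For the $(1,2)$-entry, the Leibniz rule $d(fg)=f\,dg+df\,g$ for the noncommutative differential $d$, combined with the commutation relations between $D$ and multiplication operators, reduces $M_f(D\circ dg)+(D\circ df)M_g$ to $D\circ d(fg)$. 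Linearity of $\Phi$ is immediate, so $\Phi$ is an algebra homomorphism.

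For (ii), the plan is to compute the Hilbert--Schmidt norm of $\Gamma_{\phi_h}$ directly and bound it via a Plancherel-type identity. By the substitution $s=x+t$,
$$\|\Gamma_{\phi_h}\|_{\calL^2}^2=\int_0^\infty\!\int_0^\infty|\phi_h(x+t)|^2\,dx\,dt=\int_0^\infty s\,|\phi_h(s)|^2\,ds.$$
The central analytic step is the identity
$$\int_{-\infty}^\infty|\kappa|\,|\phi_h(\kappa)|^2\,d\kappa=c\iint_{i\Rb\times i\Rb}\frac{|h(x)-h(y)|^2}{|x-y|^2}\,\frac{ds(x)\,ds(y)}{\pi}$$
for a universal constant $c$, which is the rigorous content of the formal formula $\int|\kappa|e^{-i\kappa x}\,d\kappa=-2/x^2$ cited in the paper just after Definition~\ref{Dirichletalgebra}. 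Essentially this is the Fourier representation of the $H^{1/2}(\Rb)$ seminorm: $|\kappa|$ is the Fourier multiplier of $(-\Delta)^{1/2}$, and Plancherel's theorem translates $\langle(-\Delta)^{1/2}g,g\rangle$ into the Gagliardo double integral on the right. Combining the two displays yields
$$\|\Gamma_{\phi_h}\|_{\calL^2}^2\le\int_{-\infty}^\infty|\kappa|\,|\phi_h(\kappa)|^2\,d\kappa\le c\,\|h\|_\calD^2,$$
establishing boundedness; linearity is clear.

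The main obstacle I anticipate is that $h\in\calD$ need only be bounded plus Dirichlet-integrable, not in $L^2(i\Rb;\Cb)$ (constants lie in $\calD$), so the Fourier transform $\phi_h$ must initially be understood as a tempered distribution before the Plancherel identity can be invoked. I would circumvent this by first establishing the bound on the dense subspace $\calD\cap L^2(i\Rb;\Cb)$, where Plancherel applies classically, and then extending to all of $\calD$ by continuity of the Hilbert--Schmidt norm in the Dirichlet norm together with a standard truncation and approximation argument (for instance, multiplying $h$ by a smooth cutoff in $y$ and passing to the limit, with the observation that the zero-frequency contribution from any additive constant gives no $\kappa>0$ mass and hence does not affect $\Gamma_{\phi_h}$).
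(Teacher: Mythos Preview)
Your proposal is correct and follows essentially the same route as the paper. For (ii) the arguments are virtually identical: both compute $\|\Gamma_{\phi_h}\|_{\calL^2}^2=\int_0^\infty \kappa\,|\phi_h(\kappa)|^2\,d\kappa$ and identify this with the Dirichlet double integral via Plancherel applied to $h(iy+it)-h(iy)$. For (i) there is a slight difference of emphasis: the paper's proof simply checks that $f\,dh$ yields a Hilbert--Schmidt kernel (so the map lands in $\calA$), leaving the Leibniz identity implicit, whereas you make the matrix multiplication and Leibniz rule explicit but take the well-definedness for granted; together these are the two halves of the same verification. Your care about $\calD\not\subset L^2$ and the need for an approximation step is a legitimate point that the paper does not address.
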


\begin{proof} (i) For $f,h\in \calD$, the differential $f\,dh$ gives rise to an integral operator such that
\[ \iint_{\Rb^2} \vert f(ix)\vert^2 \Bigl\vert {\frac{h(ix)-h(iy)}{x-y}}\Bigr\vert^2 {\frac{dxdy}{\pi}}<\infty .\]

(ii) The Fourier transform $\phi_h (\kappa )$ of $h (iy) \in L^2(i\Rb ; \Cb )$ satisfies
  \begin{equation}\label{DouglasDirichlet} \iint_{\Rb^2} {\frac{\vert h(ix)-h(iy)\vert^2}{\vert x-y\vert^2}} {\frac{dxdy}{\pi}} 
     = \int_{-\infty}^\infty \vert \kappa\vert
                     \vert\phi_h (\kappa )\vert^2 {\frac{d\kappa}{\pi}}.
  \end{equation}
This follows from Plancherel's formula applied to 
 \[ h(iy+it)-h(iy) = {\frac{1}{2\pi}}\int_{-\infty}^\infty
            (e^{i(y+t)\kappa }-e^{iy\kappa}) \phi_h (\kappa )d\kappa . \]
Hence the Hankel integral operator with kernel $\phi_h (\kappa +\eta )$ is Hilbert--Schmidt. Note that for $h$ in the Hardy space $H^2(LHP )$ of the left half plane, $\phi_h (\kappa )=0$ for $\kappa <0$, and 
$\int_0^\infty \xi \vert \phi_h (\kappa )\vert^2 \, d\kappa = \norm{\Gamma_{\phi_h}}^2_{{\calL}^2}$.
\end{proof}

By the Paley--Wiener Theorem, $\scrL: L^2((0, \infty );\Cb) \rightarrow H^2(RHP)$ gives a unitary operator. In (\ref{Hankelint}), $\Gamma_\phi$ is a bounded linear operator $L^2((0, \infty );\Cb )\rightarrow L^\infty ((0, \infty ); \Cb )$. If $t^{1/2} \phi (t) \in L^2((0, \infty );\Cb  )$ then $\Gamma_\phi$ is 
a Hilbert--Schmidt operator on $L^2((0, \infty );\Cb) $. The following result links $\scrL$ with $\Gamma_\phi$.

\begin{lem}\label{boundedHankel} 
 Let $h\in L^\infty (0, \infty) \cap L^2((0,\infty); \Cb )$ and let $\phi = \scrL h$. Then
\begin{enumerate} [(i)] 
  \item $\Gamma_\phi = \scrL M_h \scrL$  and so gives a bounded Hankel integral operator on $L^2((0, \infty );\Cb )$. 
  \item Suppose further that $\int_0^\infty t^{-1}\vert h(t)\vert^2\, dt$ converges. Then $\Gamma_\phi$ is Hilbert--Schmidt.
\end{enumerate}
\end{lem}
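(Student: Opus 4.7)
The plan is to prove (i) by a direct Fubini computation establishing the factorization $\Gamma_\phi = \scrL M_h \scrL$, and then to leverage this factorization in (ii) via the standard Hilbert--Schmidt operator-ideal inequality.

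For (i), I would first verify the identity pointwise for $f$ in a suitably nice dense class, say continuous and compactly supported. For such an $f$ and each fixed $x>0$, the integrand $e^{-xt}\,h(t)\,e^{-ts}\,f(s)$ is absolutely integrable over $(0,\infty)^2$, using $h\in L^\infty$, the exponential decay in $t$, and the compact support of $f$. Fubini then gives
\[
(\scrL M_h \scrL f)(x)
=\int_0^\infty h(t)\,e^{-xt}(\scrL f)(t)\,dt
=\int_0^\infty f(s)\,\phi(x+s)\,ds
=\Gamma_\phi f(x).
\]
Boundedness of $\Gamma_\phi$ then follows because $\scrL$ is bounded on $L^2((0,\infty);\Cb)$ with $\|\scrL\|\le\sqrt{\pi}$ (via Hilbert's inequality applied to $\|\scrL h\|_2^2=\iint h(t_1)\overline{h(t_2)}/(t_1+t_2)\,dt_1\,dt_2$), and $M_h$ is bounded with operator norm $\|h\|_\infty$. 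The identity then extends to all of $L^2((0,\infty);\Cb)$ by density.

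For (ii), I would combine the factorization from (i) with the fact that the composition of a Hilbert--Schmidt operator with a bounded operator is Hilbert--Schmidt, with the product of the norms as an upper bound:
\[
\|\Gamma_\phi\|_{\calL^2}=\|\scrL M_h \scrL\|_{\calL^2}\le \|\scrL\|\cdot \|M_h\scrL\|_{\calL^2}.
\]
The operator $M_h\scrL$ has integral kernel $K(x,t)=h(x)\,e^{-xt}$, and a direct computation yields
\[
\|M_h\scrL\|_{\calL^2}^2
=\iint_{(0,\infty)^2}|h(x)|^2 e^{-2xt}\,dt\,dx
=\tfrac{1}{2}\int_0^\infty\frac{|h(x)|^2}{x}\,dx,
\]
which is finite by hypothesis. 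Hence $\Gamma_\phi$ is Hilbert--Schmidt.

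There is no serious obstacle along this route; the Fubini interchange in (i) is routine on the dense subclass, and (ii) then reduces to the one-line kernel computation above. An alternative proof of (ii) that avoids invoking (i) would evaluate $\|\Gamma_\phi\|_{\calL^2}^2=\int_0^\infty t|\phi(t)|^2\,dt$ directly by substituting $\phi=\scrL h$, reducing the problem to estimating $\iint h(s_1)\overline{h(s_2)}/(s_1+s_2)^2\,ds_1\,ds_2$ and invoking a Hardy--Littlewood--P\'olya inequality for the homogeneous kernel $(s_1s_2)^{1/2}/(s_1+s_2)^2$ of degree $-1$; however, the factorization route is considerably more transparent.
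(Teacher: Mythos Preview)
Your proof of (i) is correct and coincides with the paper's: both establish $\Gamma_\phi=\scrL M_h\scrL$ by the same Fubini interchange and then invoke boundedness of $\scrL$ and $M_h$.

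For (ii) your argument is correct but takes a cleaner route than the paper's. The paper cycles the factors to $M_h\scrL^2$, identifies $\scrL^2$ as Carleman's operator with kernel $1/(x+y)$, checks that $M_h\scrL^2$ has Hilbert--Schmidt kernel $h(t)/(x+t)$, and then transfers this to $\scrL M_h\scrL$ via a singular-number inequality (Simon, \textsl{Trace Ideals}, Theorem~8.1), after reducing to real-valued $\phi$. You instead bound $\|\scrL(M_h\scrL)\|_{\calL^2}\le\|\scrL\|\,\|M_h\scrL\|_{\calL^2}$ using only the elementary ideal property of $\calL^2$, and compute the Hilbert--Schmidt norm of $M_h\scrL$ directly from its exponential kernel $h(x)e^{-xt}$. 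Your approach avoids the appeal to singular-number majorization and the real/imaginary split, at the cost of not exhibiting the Carleman connection; it is the more economical proof of the stated lemma.
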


\begin{proof} \begin{enumerate} [(i)]
\item Let $P_n(t)$ be the Laguerre polynomial of degree $n$ and let $\ell_n(t)=\sqrt{2} e^{-t}P_n(2t)$. Then  $(\ell_n)_{n=0}^\infty$ forms an orthonormal basis for $L^2((0, \infty );\Cb )$ which gives a Hankel matrix
\begin{align} 
  \bigl\langle\Gamma_\phi \ell_n, \ell_m\bigr\rangle_{L^2((0, \infty );\Cb)}
    &= \int_0^\infty h (t){\frac{2(t-1)^{n+m}}{(t+1)^{n+m+2}}}\, dt\nonumber\\
    &= \int_{-1}^1 h\Bigl( {\frac{1+u}{1-u}}\Bigr) u^{n+m}\, du
          \qquad (n, m=0, 1, \dots ).
\end{align}

For $f \in L^2((0,\infty);\Cb) $,
\begin{align} 
 \scrL M_h \scrL f(x) 
  &= \int_0^\infty e^{-xs}h(s) \int_0^\infty e^{-st}f(t)\, dt ds \\
    &= \int_0^\infty \int_0^\infty e^{-s(x+t)} h(s) \, ds\, f(t) \, dt
                      \nonumber\\
    &=\int_0^\infty \phi (x+t) f(t)\, dt.
\end{align}
Thus $\Gamma_\phi = \scrL M_h\scrL$ is a composition of bounded operators and  hence is bounded.
\item The Laplace transform $\scrL$ on $L^2((0, \infty );\Cb )$ is a self-adjoint operator, with square $\scrL^2$ given by Carleman's Hankel operator \cite[Theorem 2.6]{Power} with integral kernel $1/(x+y)$ on $L^2((0, \infty );\Cb )$ which is known to have spectrum $[0, \pi ]$.  
We have equality of spectra as operators on $L^2((0, \infty );\Cb )$
  \[\sigma (\Gamma_\phi )\setminus \{ 0\}
      = \sigma (\scrL M_h\scrL)\setminus \{0 \}
      =\sigma (M_h\scrL^2) \setminus \{0 \},\]
where $M_h \scrL^2$ has kernel $h(t)/(x+t)$ as an integral operator on $L^2((0, \infty );\Cb )$, and this kernel is Hilbert--Schmidt. We momentarily assume that $\phi$ is real-valued; otherwise, we split into real and imaginary parts. 
Then by \cite[Theorem~8.1]{Si}, the singular numbers satisfy
\[ \sum_{n=0}^\infty \mu_n (\scrL M_h \scrL)^2\leq \sum_{n=0}^\infty \mu_n(M_h\scrL^2)^2 \]
so $\scrL M_h\scrL$ is also Hilbert--Schmidt. \qedhere
\end{enumerate}
\end{proof}

\begin{defn} \label{shift} 
\begin{enumerate}[(i)]
\item For $t > 0$, let $S_t \in \calL(L^2((0, \infty);\Cb )$ denote the shift operator $S_tf(x)=f(x-t){\Ib}_{(0, \infty )}(x-t)$,  $x > 0$, and let $S_t^\dagger$ be its adjoint. 
\item \cite{Po} 
Let $(K_t)_{t>0}$ be a family of integral operators on $L^2((0, \infty );\Cb )$ with kernels $(k_t)_{t>0}$, so $K_tf(x)=\int_0^\infty k_t(x,y)f(y) \,dy$. We say that $(K_t)_{t>0}$ is semi-additive  if there exists a function $k$ such that $k_t(x,y)=k(x+t,y+t)$ for all $x,y,t > 0$. 
\item For a differential function of two variables, we write $\partial_\Delta $ for the diagonal derivative, so $\partial_\Delta k(x,y)= ({\frac{\partial}{\partial x}}+{\frac{\partial}{\partial y}})k(x,y)$.
\end{enumerate}
\end{defn}

The families $(S_t)_{t>0}$ and $(S_t^\dagger )_{t>0}$ both give strongly continuous contraction semigroups on $L^2((0, \infty );\Cb )$, and $(S_t)_{t>0}$ is a semigroup of isometries. Note that for $\phi\in L^2((0, \infty );\Cb )$ the space $Y_\phi =\cl \spann\{ \phi (x+t) :t>0\}$ gives a closed linear subspace of $L^2((0, \infty ); \Cb )$ that is invariant under the adjoint shift semigroup $(S_t^\dagger )_{t>0}$, so $Y_\phi^\perp =L^2\ominus Y_\phi$ is invariant under the shift semigroup $(S_t)_{t>0}$. The space $Y_\phi^\perp$ is the null space of $\Gamma_\phi^\dagger$ since  
  \begin{align*} h\in Y_\phi^\perp
   &\iff \int_0^\infty \phi (x+y)^\dagger h(y) \,dy=0\qquad (x>0)\\
   & \iff \int_{-\infty}^\infty \overline{\hat\phi_{(x)}(iy)}{\hat h(iy)} \, {\frac{dy}{\pi i}}=0\qquad (x>0).
  \end{align*}
  
Hankel integral operators $\Gamma$ on $L^2((0, \infty );\Cb )$ are characterized by the intertwining relation $S_t^\dagger\Gamma=\Gamma S_t$ for all $t>0$. P\"oppe \cite{Po} called such operators additive in view of $\phi (x+t)$ in their kernels (\ref{Hankelint}). The notion of semi-additivity is related to products of Hankel operators, as in Proposition \ref{diagonaldiff} below.

For ease of notation we shall write $\calL^2$ for ${\calL}^2(L^2((0, \infty );\Cb ))$ in the lemma below. 
For $t > 0$ define $\sigma_t: \calL^2 \to \calL^2$ by $\sigma_t(K) = S_t^\dagger K S_t$. A small calculation shows that if $K \in \calL^2$ has kernel
$k(x,y)\in L^2((0, \infty )\times (0,\infty );\Cb )$ then $\sigma_t(K) = S_t^\dagger K S_t$ is the integral operator with kernel $k(x+t,y+t)$. Let $\tilde\sigma_t(K)=S_t K S_t^\dagger$.


\begin{prop}\label{diagonaldiff} 
Let $(\sigma_t)_{t > 0}$ and $(\tilde\sigma_t)_{t>0}$ be as above.
Suppose that $K,L \in \calL^2$. Then
\begin{enumerate}[(i)] 
  \item 
  $(\sigma_t)_{t>0}$ is a strongly continuous contraction semigroup on $\calL^2$.
  \item $\sigma_t(K)\rightarrow 0$ in the strong operator topology as $t\rightarrow\infty $.
  \item the infinitesimal generator of $(\sigma_t)_{t > 0}$ is the diagonal derivative $\partial_\Delta$.
  \item $(\tilde\sigma_t)_{t>0}$ is a strongly continuous contraction semigroup on $\calL^2$.
  \item  $\tilde\sigma_t(KL)=\tilde\sigma_t(K)\tilde\sigma_t(L)$ and $\langle \tilde\sigma_t(K),L\rangle =\langle K, \sigma_t(L)\rangle$.
  \item\label{Hankprod} If $K$ is a product of vector-valued Hankel operators, then the kernel $\partial_\Delta k_t(x,y)$ is of finite rank. 
\end{enumerate}
\end{prop}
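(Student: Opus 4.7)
The plan is to handle (i)--(v) by direct manipulation of the isometric shift $S_t$ and its adjoint, and to reserve the substantive work for (vi), which reduces to iterated integration by parts.

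For (i) and (iv), $S_t$ is an isometry, so $\tilde\sigma_t(K) = S_tKS_t^\dagger$ preserves the Hilbert--Schmidt norm, while $\sigma_t(K) = S_t^\dagger KS_t$ has kernel $k(x+t, y+t)$ and hence $\Vert\sigma_t(K)\Vert_{\calL^2}^2 = \iint_{(t,\infty)^2}\vert k\vert^2 \leq \Vert K\Vert_{\calL^2}^2$. The semigroup laws follow from $S_sS_t = S_{s+t}$ together with the adjoint relation. Strong continuity reduces to $L^2$-continuity of translation acting on the kernel viewed as an element of $L^2((0,\infty)^2;\Cb)$. For (iii), on the dense core of smooth compactly supported kernels one checks directly that $t^{-1}(k(x+t,y+t) - k(x,y)) \to (\partial_x + \partial_y)k$ in $\calL^2$ as $t \downarrow 0$, identifying the generator as $\partial_\Delta$. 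For (ii), $S_tf$ is supported on $(t,\infty)$ and so tends to $0$ weakly; since $K$ is compact (being Hilbert--Schmidt), $KS_tf \to 0$ in norm, whence $\Vert\sigma_t(K)f\Vert \leq \Vert KS_tf\Vert \to 0$. For (v), $S_t^\dagger S_t = I$ gives $\tilde\sigma_t(K)\tilde\sigma_t(L) = S_tKLS_t^\dagger = \tilde\sigma_t(KL)$, and cyclicity of trace yields $\langle\tilde\sigma_t(K), L\rangle = \trace(S_tKS_t^\dagger L^\dagger) = \trace(K(S_t^\dagger LS_t)^\dagger) = \langle K, \sigma_t(L)\rangle$.

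For (vi), write $K = \Gamma_{\phi_1}\cdots\Gamma_{\phi_n}$ with kernel
\[ k(x,y) = \int_{(0,\infty)^{n-1}} \phi_1(x+s_1)\phi_2(s_1+s_2)\cdots\phi_n(s_{n-1}+y)\,ds_1\cdots ds_{n-1}. \]
By (iii), it suffices to show that $\partial_\Delta k(x,y)$ is a finite-rank kernel, for then $\partial_\Delta k_t$ has the same rank by translation. The computational heart is the identity $\phi_j'(s_{j-1}+s_j) = \partial_{s_{j-1}}\phi_j = \partial_{s_j}\phi_j$, which lets the $\partial_x$ action on $\phi_1(x+s_1)$ propagate through the integral by successive integration by parts in $s_1, s_2, \ldots$, producing at each stage a boundary contribution of tensor-product form (a function of $x$ alone times a function of $y$ alone). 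In the realizable setting $\phi_j(t) = C_je^{-tA_j}B_j$, the structure is transparent: the kernel factors as $C_1e^{-xA_1}Me^{-yA_n}B_n$ for a composite Sylvester integral $M$, and the Lyapunov identities of (\ref{Lyapunov}) (applied to cross-system integrals $\int_0^\infty e^{-sA_i}B_iC_je^{-sA_j}\,ds$ satisfying $A_iR_{i,j} + R_{i,j}A_j = B_iC_j$) express $\partial_\Delta k = -C_1e^{-xA_1}(A_1M + MA_n)e^{-yA_n}B_n$ as a finite sum of rank-one kernels of the form $\phi_1(x)g(y)$ and $h(x)\phi_n(y)$.

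The main obstacle is controlling the boundary contributions at $s_j = \infty$ in the iterated integration by parts, which is handled by the decay of $\phi_j$ inherited from the strongly continuous contraction semigroups $(e^{-tA_j})_{t\geq 0}$. The combinatorial bookkeeping of the telescoping cancellations in the general case also requires care, and the Lyapunov-based organization above is the cleanest route to the finite-rank conclusion.
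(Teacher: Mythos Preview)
Your treatment of (i)--(v) is correct and essentially matches the paper's proof. A minor point: for (ii) the paper actually shows the stronger convergence $\Vert\sigma_t(K)\Vert_{\calL^2}\to 0$ directly from $\iint_{(t,\infty)^2}\vert k\vert^2\to 0$, whereas you argue SOT convergence via compactness; both are fine.

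The genuine issue is in (vi). The statement refers to a product of \emph{two} Hankel operators with vector-valued symbols $\phi,\psi\in L^2((0,\infty);\Cb^n)$, and the paper's proof is a one-line integration by parts:
\[ \partial_\Delta\int_0^\infty\phi(x+z)^\top\psi(z+y)\,dz=-\phi(x)^\top\psi(y). \]
Your iterated scheme for an $n$-fold product $\Gamma_{\phi_1}\cdots\Gamma_{\phi_n}$ does not yield the claimed conclusion when $n\geq 3$ is odd. Tracking signs carefully, propagating $\partial_x$ through the chain by successive IBP in $s_1,\dots,s_{n-1}$ produces
\[ \partial_x k = (\text{boundary terms of tensor-product form}) + (-1)^{n-1}\partial_y k, \]
so what you actually prove is that $\partial_x k-(-1)^{n-1}\partial_y k$ is finite rank. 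This equals $\partial_\Delta k$ only when $n$ is even. For $n=3$ one can check directly that $\partial_\Delta k$ contains the term $-2\int\!\!\int\phi_1(x+s_1)\phi_2'(s_1+s_2)\phi_3(s_2+y)\,ds_1ds_2$, which is not finite rank in general. Your Lyapunov reorganisation has the same defect: for $n=3$ with $M=R_{1,2}R_{2,3}$, the identity $A_1M+MA_3=B_1C_2R_{2,3}+R_{1,2}B_2C_3-2R_{1,2}A_2R_{2,3}$ leaves the residual $-2R_{1,2}A_2R_{2,3}$, which is not finite rank. So restrict (vi) to the two-factor case; then your argument collapses to the paper's.
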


\begin{proof} \begin{enumerate}[(i)]
\item This is straightforward. Note that 
   $\Vert S_t^\dagger K S_t \Vert_{\calL^2} 
      \le \Vert {S_t^\dagger} \Vert  \Vert K \Vert_{\calL^2}
                      \Vert {S_t} \Vert = \Vert K \Vert_{\calL^2}$.
\item We have
\begin{equation}\Vert \sigma_t(K)\Vert^2_{\calL^2}=\iint_{(0, \infty )\times (0,\infty )}\vert k(x+t,y+t)\vert^2 \, dxdy\end{equation}
which converges to $0$ as $t\rightarrow\infty$.
\item For suitable differentiable $k$ the operator $d\sigma_t(K)/dt$ has kernel $({\frac{\partial} {\partial x}}+{\frac{\partial}{\partial y}})k(x+t,y+t)$, so $\partial_\Delta$ is the diagonal derivative. 

\item As in (ii), we have 
\begin{equation}\Vert K-\sigma_t(K)\Vert^2_{\calL^2}=\iint_{(0, \infty )\times (0,\infty )}\vert k(x,y)-k(x+t,y+t)\vert^2 \, dxdy\end{equation}
which converges to $0$ as $t\rightarrow 0+$.

\item For the first identity we have that since $S_t^\dagger S_t = I$,
  \[\tilde\sigma_t(KL)= S_tKLS_t^\dagger =S_tKS_t^\dagger S_t L S_t^\dagger =\tilde\sigma_t(K)\tilde\sigma_t(L).\]
For the second, 
\[ \trace(\tilde\sigma_t(K)L^\dagger )=\trace(S_tKS_t^\dagger L^\dagger )= \trace( KS_t^\dagger L^\dagger S_t)=\trace(K\sigma_t(L)^\dagger ).\]

\item Suppose that $\phi, \psi\in L^2((0,\infty );\Cb^n)$ with $\phi$ and $\psi$ absolutely continuous; let $v^\top$ be the transpose of a column vector $v$. Then by integration by parts, we find
\[\partial_\Delta \int_0^\infty \phi (x+z)^\top \psi (z+y)\, dz=-\phi (x)^\top \psi (y). \qedhere\]
\end{enumerate}
\end{proof}



\section{Integral kernels that arise from differential equations}\label{S:Intkernels}

We now contrast and compare operators $K$ with kernel (\ref{k}) and $W$ in (\ref{kerW}) that arise in different ways from a differential equation. Let $J=\begin{bmatrix}0&-1\\ 1&0\end{bmatrix}$ and 
consider the canonical or Hamiltonian differential equation
  \begin{equation}\label{Hamiltonian} 
  {\frac{d}{dx}}J\Psi (x;\lambda ) =
      -(\lambda \Omega_1(x)+\Omega_0(x))\Psi (x;\lambda)
  \end{equation}
where $\Psi (x;\lambda) \in \Cb^{2\times 1}$ and the continuous coefficients in $M_{2\times 2}(\Cb )$ are $\Omega_0(x)=\Omega_0(x)^\dagger$, and a positive semi definite  $\Omega_1(x)=\Omega_1(x)^\dagger$ and $\Omega_1(x)\geq 0$. Let $\Phi (x;\lambda )$ be the fundamental solution matrix. Then
$\Psi (x;\lambda )=\Phi (x;\lambda )[a(\lambda ), b(\lambda )]^\top$ gives a solution with $\Psi (0;\lambda )=[a(\lambda ), b(\lambda )]^\top$. 
 
First consider Hankel factorization, which produces kernels as in Proposition \ref{diagonaldiff}. Suppose $\lambda \geq 0$, but let $x=\Re z$ for $z\in \Cb$, then suppose $\Omega (z;\lambda )=\lambda\Omega_1(z)+\Omega_0 (z)$ satisfies
   \begin{gather*} \Omega (x)=\Omega (x)^\dagger\qquad (x\geq 0) \\
   {\frac{\Omega (z)-\Omega (z)^\dagger }{2i}}\geq 0\qquad (\Im z>0),
   \end{gather*}
in the sense of being positive semidefinite; such an $\Omega$ is matrix monotone and has the form
\begin{equation}\label{omega} 
   \Omega (z)=\Omega_1z+\Omega_0+\int_{\delta}^\infty \Bigl( {\frac{u}{1+u^2}}-{\frac{1}{u+z}}\Bigr) \, \varpi (du)
       \qquad (z\in \Cb \setminus (-\infty , -\delta ])
\end{equation}
where $\Omega_1, \Omega_0$ are constant matrices with $\Omega_1\geq 0$, $\delta>0$ and $\varpi$ is a positive matrix measure on $[\delta, \infty )$. Then by \cite[Theorem 1.1]{B0}, there exists a Hilbert space $H_0$, and $\phi_\lambda\in L^2((0, \infty ); H_0)$ such that
   \begin{equation}\label{k}
   k(x,y; \lambda )
   = {\frac{\Psi (y;\lambda )^\top J\Psi (x;\lambda )}{x-y}} 
   = \int_0^\infty \langle \phi_\lambda (x+u), \phi_\lambda 
                       (y+u)\rangle_{H_0} \, du,
    \end{equation}
which is a product of Hankel operators, giving a particular instance of Proposition \ref{diagonaldiff}. Although the numerator of $k$ vanishes when $x=y$, the right-hand side is jointly continuous as a function of $(x,y)$, so the quotient in (\ref{k}) is unambiguous. 
\begin{ex} By taking $u_j\in (-\infty ,0)$ for $j=-1, \dots ,-N$ and $\Omega_j\in M_{2\times 2}(\Cb )$ with $\Omega_j\geq 0$, we can introduce the rational function
\[ \Omega (x)=\Omega_1x+\Omega_0+ \sum_{j=-1}^{-N} \Bigl( {\frac{u_j}{1+u_j^2}}-{\frac{1}{u_j+x}}\Bigr)\Omega_j\]
which produces examples of the form (\ref{omega}). The Airy kernel \cite{TW} is given by
\[k(x,y)={\frac{{\hbox{Ai}}(x){\hbox{Ai}}'(y)-{\hbox{Ai}}'(x){\hbox{Ai}}(y)}{x-y}}\]
from the differential equation (\ref{Hamiltonian}) with the choice in (\ref{omega}) of
\[\Omega (x)=\begin{bmatrix}x&0\\ 0&-1\end{bmatrix}.\]
\end{ex}

\begin{lem} Let $K:L^2((0, \infty ); \Cb )\rightarrow L^2((0, \infty ) ;\Cb )$ be the integral operator that has kernel $k(x,y)$, so that $K=\Gamma_\phi^\dagger\Gamma_\phi$ where $\Gamma_\phi :L^2((0, \infty ); \Cb) \rightarrow L^2((0, \infty ); H_0)$ is the Hankel operator that has impulse response function $\phi$, and suppose $\int_0^\infty t\Vert \phi (t)\Vert^2_{H_0}dt$ converges. Then
\begin{equation}\label{detK}
  \det (I+\lambda K) = \det \begin{bmatrix} 
              I                   & \lambda \Gamma_\phi \\
             -\Gamma_\phi^\dagger &  I 
             \end{bmatrix}
        \qquad (\lambda\in \Cb ).\end{equation}
\end{lem}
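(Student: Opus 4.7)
The plan is to establish the identity via a block LU factorization and then to invoke multiplicativity of the Fredholm determinant together with the triangular block formula.

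First I would verify that all the Fredholm determinants in \eqref{detK} make sense. Because the kernel of $\Gamma_\phi$ is $\phi(x+y)$, the change of variables $t=x+y$ on the quadrant $(0,\infty)^2$ yields
\begin{equation*}
  \snorm{\Gamma_\phi}_{\calL^2}^2
    =\iint_{(0,\infty)^2}\snorm{\phi(x+y)}_{H_0}^2\,dxdy
    =\int_0^\infty t\,\snorm{\phi(t)}_{H_0}^2\,dt,
\end{equation*}
which is finite by hypothesis. Hence $\Gamma_\phi$ is Hilbert--Schmidt and $K=\Gamma_\phi^\dagger\Gamma_\phi$ is trace class, so the left-hand side of \eqref{detK} is a well-defined Fredholm determinant.

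The second step is the block factorization
\begin{equation*}
  \begin{bmatrix} I & \lambda \Gamma_\phi \\ -\Gamma_\phi^\dagger & I \end{bmatrix}
   =\begin{bmatrix} I & 0 \\ -\Gamma_\phi^\dagger & I \end{bmatrix}
    \begin{bmatrix} I & \lambda \Gamma_\phi \\ 0 & I+\lambda K \end{bmatrix},
\end{equation*}
which is an elementary check. Equivalently, right-multiplying the left-hand side by $\bigl[{}_{\,0}^{\,I}\ {}_{\,I}^{-\lambda\Gamma_\phi}\bigr]$ reduces it to the lower block-triangular matrix $\bigl[{}_{-\Gamma_\phi^\dagger}^{\quad I}\ {}_{I+\lambda K}^{\quad 0}\bigr]$. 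I would then conclude by applying the block-triangular formula for the Fredholm determinant, giving the right-hand side of \eqref{detK} as $\det(I)\cdot\det(I+\lambda K)$, since the upper-triangular factor has diagonal blocks $I,I$ and the lower-triangular factor has diagonal blocks $I,I+\lambda K$.

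The main obstacle is technical rather than conceptual: the off-diagonal block $\lambda\Gamma_\phi$ that appears in each triangular factor is Hilbert--Schmidt but not trace class, so the standard multiplicativity $\det(AB)=\det(A)\det(B)$ requires justification. I would handle this in one of two equivalent ways. The clean approach is to note that each triangular factor has the form $I+N$ with $N$ Hilbert--Schmidt and $\trace(N)=0$ (because the diagonal blocks of $N$ vanish), so the $2$-regularized determinant $\det_2(I+N)$ coincides with the naive determinant and multiplicativity of $\det_2$ on block-triangular pieces with vanishing on-diagonal corrections applies directly; see \cite[Ch.~9]{Si}. Alternatively, one can approximate $\phi$ in the weighted norm $\int_0^\infty t\snorm{\phi(t)}^2\,dt$ by impulse responses with finite-rank Hankel operators (for instance by truncating the Laguerre expansion from Lemma~\ref{boundedHankel}), verify \eqref{detK} in the finite-rank case where all manipulations are matrix identities, and then pass to the limit using continuity of the Fredholm determinant with respect to the trace-class norm on $K$ and the Hilbert--Schmidt norm on $\Gamma_\phi$. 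Either route delivers the stated identity.
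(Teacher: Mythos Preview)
Your argument is correct and considerably more careful than the paper's own proof, which dispatches the identity in one line: ``Here $\Gamma_\phi$ is Hilbert--Schmidt, and $K=\Gamma_\phi^\dagger\Gamma_\phi$ by (\ref{k}), so $K$ is trace class. The determinant identity follows from unitary equivalence.'' Your block LU (Schur complement) factorization is the standard mechanism underlying such identities, and you are right to flag the subtlety that the triangular factors are only Hilbert--Schmidt perturbations of the identity; the paper does not comment on this. Whether the phrase ``unitary equivalence'' points to a genuinely different manipulation or is simply shorthand for the same Schur-complement computation is not made explicit, so in substance your route and the paper's are the same, with yours supplying the details the paper omits.
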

\begin{proof} Here $\Gamma_\phi$ is Hilbert--Schmidt, and $K=\Gamma_\phi^\dagger\Gamma_\phi$ by (\ref{k}), so $K$ is trace class. The determinant identity follows from unitary equivalence.\end{proof}

In Proposition~\ref{GelfandLevitanProp}, we compute such Fredholm determinants by realizing $\phi$ from a linear system and solving a Gelfand--Levitan equation.
The algebraic interpretation of Hankel products appears in section \ref{S:Extensions}.
The differential equation (\ref{Hamiltonian}) also gives a kernel $W$ in (\ref{kerW}) which turns out to be a replicating kernel on a suitable space of holomorphic functions.  

\begin{defn}\label{phase} 
Suppose that $E$ is an entire function such that 
$\vert E(\lambda )\vert >\vert E(\bar\lambda )\vert$ for all $\lambda\in \Cb$ such that $\Im \lambda >0$. We write $E^*(\lambda )=\overline{E(\bar\lambda )}.$
\begin{enumerate}[(i)]
\item A de Branges's phase function for $E$ is a function $\varphi \in C(\Rb ;\Rb )$ such that $E(\kappa )e^{i\varphi (\kappa )}$ is real for all $\kappa \in \Rb$. (Such a function always exists and is unique up to addition of integer multiples of $\pi$; see \cite[page 54]{deB}.)
\item Let $H^2$ be the usual Hardy space on the upper half-plane $\{ z\Cb :\Im z>0\}$ and $R_+:L^2\rightarrow H^2$ the Riesz projection. Likewise, let $R_{-} = I-R_+$ be the complementary projection onto $\overline {H^2}=L^2\ominus H^2$.
\item Let $W$ be the kernel given by 
\begin{equation}\label{kerW1} 
  W(\nu , \lambda ) 
    ={\frac{E(\nu )\overline{E(\lambda )} -E(\bar\lambda )E^*(\nu )}{2\pi i(\bar\lambda -\nu)}}.\end{equation}
\item Let $K(E)$ be the space of entire functions $h$ such that 
$h/E\in L^2(\Rb; \Cb )$, with inner product
\begin{equation}\label{genPW1} \langle f,h\rangle_{K(E)}=\int_{-\infty}^\infty f(t)\overline{h(t)}{\frac{dt}{\vert E(t)\vert^2}} \end{equation}
and such that
\begin{equation}\label{genPW2} \vert h(z)\vert^2 \leq \Vert h\Vert^2_{K(E)}W(z,z)\qquad (z\in \Cb). \end{equation}
\end{enumerate}
\end{defn}

In what follows, we shall consider the primary variable to be the spectral parameter $\lambda$, while the independent variable $x$ in (\ref{Hamiltonian}) is reduced to an auxiliary parameter. For a solution of (\ref{Hamiltonian}), write $\Psi (x; \lambda )=\begin{bmatrix} f(x; \lambda )\\ -h(x; \lambda )\end{bmatrix}$ where 
$f^* (x; \lambda )=f(x; \lambda )$ and $h^*(x;\lambda ) =h (x;\lambda )$, we introduce 
$E(\lambda )=f(x;\lambda )-ih(x;\lambda )$ and introduce the kernels which are
\begin{equation}\label{kerW} 
  W(\nu , \lambda )= {\frac{f(x; \nu )\overline{h(x; \lambda  )}- \overline{f(x; \lambda )} h(x; \nu )}{\pi (\bar\lambda -\nu )}}.\end{equation}
The diagonal $W(\kappa ,\kappa )$ for $\kappa\in \Rb$ is described as follows.

\begin{prop}\label{DeBphaseprop} Suppose that $\Omega (x;\lambda )=\lambda\Omega_1(x)+\Omega_0(x)$ with $\Omega_0, \Omega_1$ real symmetric and $\Omega_1(x)\geq 0$.  Suppose that $\Psi (x;\lambda )$ gives a holomorphic family of solutions of (\ref{Hamiltonian}) for $x>0$ with $\Psi (0, \lambda )$ nonzero and real. Then, with the notation from (\ref{kerW})
\begin{enumerate}[(i)]
\item de Branges's phase function for $E$ satisfies
\begin{equation}\label{DeBphasederivative} 
   \Vert \Psi (x;\kappa )\Vert^2 {\frac{d\varphi}{d\kappa }}
     = \int_0^x \Psi (y; \kappa )^\top 
                  \Omega_1(y)\Psi (y; \kappa )\, dy
       \qquad (\kappa\in \Rb).
\end{equation}
\item The function $\Theta =E^*/E$ is bounded and holomorphic on the upper half plane and $\varphi (\kappa )=(1/2)\Im \log \Theta (\kappa )$.
\item There is a bounded Hankel operator $\Gamma_{\Theta^*}:H^2\rightarrow \overline{H^2}:$ $f\mapsto R_-(\Theta^*f)$ with nullspace $\Theta H^2$.
\item Let $(\kappa_j)_{j=-\infty}^\infty$ be a real sequence and $\alpha\in \Rb $ be such that $\varphi (\kappa_j)=j\pi +\alpha .$ Then
\begin{equation}\label{sampling}\int_{-\infty}^\infty {\frac{\vert F(t)\vert^2}{\vert E(t)\vert^2}}{\frac{dt}{\pi}}\geq \sum_{j=-\infty}^\infty {\frac{\vert F(\kappa_j)\vert^2}{\vert E(\kappa_j )\vert^2\varphi'(\kappa_j)}}\qquad (F\in K(E)).\end{equation} 
\item Suppose that the limits $\varphi (\infty )=\lim_{\kappa\rightarrow\infty} \varphi (\kappa )$ and $\varphi (-\infty )=\lim_{\kappa\rightarrow\infty} \varphi (-\kappa )$ exist, are finite, and $\varphi (\infty )-\varphi (-\infty )\in \pi \Zb.$  Then $\Gamma_{\Theta^*}$ 
 gives a compact Hankel operator such that $\{ f\in H^2: \Vert \Gamma_{\Theta^*} f\Vert =\Vert f\Vert\}$ is a complex vector space of dimension equal to the winding number about $0$ of $\Theta :[-\infty, \infty ]\rightarrow \Cb$.
\end{enumerate}
\end{prop}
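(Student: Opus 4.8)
For part~(i), I would differentiate the Hamiltonian equation (\ref{Hamiltonian}) in the spectral variable. Writing $\dot\Psi=\partial_\kappa\Psi$ and using $\partial_\kappa\Omega=\Omega_1$, the relations $J\Psi_x=-\Omega\Psi$ and $J\dot\Psi_x=-\Omega_1\Psi-\Omega\dot\Psi$ combine, on pairing through $J$ and using that $\Omega=\kappa\Omega_1+\Omega_0$ is symmetric with $J^\top=-J$, into a Lagrange identity of the shape $\frac{d}{dx}\bigl(\dot\Psi^\top J\Psi\bigr)=\pm\,\Psi^\top\Omega_1\Psi$, the terms in $\Omega\dot\Psi$ cancelling. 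Integrating from $0$ to $x$, where the boundary term vanishes because $\Psi(0;\kappa)$ is a fixed real vector, gives $\dot\Psi(x;\kappa)^\top J\Psi(x;\kappa)=\pm\int_0^x\Psi(y;\kappa)^\top\Omega_1(y)\Psi(y;\kappa)\,dy$. It then remains to identify the left side with $\|\Psi\|^2\,d\varphi/d\kappa$: from $\Psi=[f,-h]^\top$ one has $\dot\Psi^\top J\Psi=h\dot f-f\dot h$, while the condition that $E(\kappa)e^{i\varphi(\kappa)}$ be real forces $\tan\varphi=h/f$, so $(f^2+h^2)\,d\varphi/d\kappa=f\dot h-h\dot f$ and $f^2+h^2=\|\Psi(x;\kappa)\|^2$. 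Substituting gives (\ref{DeBphasederivative}); the signs of $J$, of the equation, and of the branch of $\varphi$ are to be tracked so that the right side is nonnegative.

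For parts~(ii) and~(iii): by uniqueness for the linear system $\Psi_x=J\Omega\Psi$, a solution vanishing at one point vanishes identically, so $\Psi(x;\kappa)\neq 0$ for all $x\geq 0$, whence $|E(\kappa)|^2=f^2+h^2>0$ and $E$ has no real zeros. The companion identity $\frac{d}{dx}\bigl(\overline\Psi^\top J\Psi\bigr)=(\bar\lambda-\lambda)\,\overline\Psi^\top\Omega_1\Psi$, integrated from $x=0$ (where $\Psi$ is real, so the boundary term is $0$) together with $\Omega_1\geq 0$, shows that $|E(\lambda)|^2-|E(\bar\lambda)|^2$ keeps a fixed sign for $\Im\lambda>0$, namely the Hermite--Biehler sign of Definition~\ref{phase}; hence $|\Theta(\lambda)|=|E(\bar\lambda)|/|E(\lambda)|\leq 1$ on the upper half plane and $\Theta=E^*/E$ is bounded and holomorphic there. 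On $\Rb$, $E=f-ih$ with $f,h$ real gives $\Theta=(f+ih)/(f-ih)=e^{2i\arg(f+ih)}$, and $\tan(\arg(f+ih))=h/f=\tan\varphi$ yields $\varphi(\kappa)=\tfrac{1}{2}\Im\log\Theta(\kappa)$, which is~(ii). Since $|\Theta|=1$ on $\Rb$, $\Theta^*=\overline\Theta\in L^\infty(\Rb)$, so $\Gamma_{\Theta^*}\colon H^2\to\overline{H^2}$, $f\mapsto R_-(\Theta^*f)$, is bounded with norm at most $1$; and $R_-(\Theta^*f)=0$ iff $\overline\Theta f\in H^2$ iff $f\in\Theta H^2$ (one direction multiplies by $\Theta$, using that $\Theta$ inner preserves $H^2$; the other uses $\Theta\overline\Theta=1$ on $\Rb$), so $\ker\Gamma_{\Theta^*}=\Theta H^2$ as in~(iii).

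For part~(iv), I would invoke de Branges's reproducing-kernel theory: $W(\cdot,w)$ is the reproducing kernel of $K(E)$ at $w$ for the inner product (\ref{genPW1}), which is precisely what (\ref{genPW1})--(\ref{genPW2}) encode, so $\langle F,W(\cdot,w)\rangle_{K(E)}=F(w)$ and $\|W(\cdot,w)\|^2_{K(E)}=W(w,w)$ for $F\in K(E)$. The hypothesis $\varphi(\kappa_j)=j\pi+\alpha$ says $E(\kappa_j)e^{i\alpha}\in\Rb$, i.e.\ $h(\kappa_j)/f(\kappa_j)=\tan\alpha$ is independent of $j$ (with the evident modification if $\cos\alpha=0$); substituting into $W(\kappa_k,\kappa_j)=\bigl(f(\kappa_k)h(\kappa_j)-f(\kappa_j)h(\kappa_k)\bigr)/\bigl(\pi(\kappa_j-\kappa_k)\bigr)$ makes it vanish for $j\neq k$, so $\{W(\cdot,\kappa_j)\}_j$ is an orthogonal family in $K(E)$. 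A L'H\^opital computation of $W$ on the diagonal, using part~(i), gives $W(\kappa,\kappa)=\tfrac{1}{\pi}|E(\kappa)|^2\varphi'(\kappa)$. Bessel's inequality for this orthogonal family then reads
\begin{align*}
  \int_{-\infty}^\infty\frac{|F(t)|^2}{|E(t)|^2}\,dt = \|F\|_{K(E)}^2
  &\geq \sum_j\frac{|\langle F,W(\cdot,\kappa_j)\rangle|^2}{\|W(\cdot,\kappa_j)\|^2} \\
  &= \pi\sum_j\frac{|F(\kappa_j)|^2}{|E(\kappa_j)|^2\varphi'(\kappa_j)},
\end{align*}
which is (\ref{sampling}) after division by $\pi$. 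The main obstacle lies here: parts~(i)--(iii) and~(v) are ODE bookkeeping together with standard inner-function facts, whereas~(iv) genuinely rests on the de Branges apparatus, so one must establish that $W$ is the reproducing kernel of $K(E)$ (so that (\ref{genPW1})--(\ref{genPW2}) really define a Hilbert space of entire functions), verify the orthogonality of the sampling kernels carefully, and deal with the degenerate cases (where $\varphi'(\kappa_j)=0$, or $\cos\alpha=0$) in which some terms of (\ref{sampling}) must be interpreted or discarded.

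For part~(v): the limit hypotheses say exactly that $e^{2i\varphi}$ has equal limits at $\pm\infty$, so by~(ii) $\Theta$ extends continuously to $\Rb\cup\{\infty\}$; an inner function continuous up to the boundary (including the point at infinity) is a finite Blaschke product, say of degree $n$, and by the argument principle $n$ is the winding number about $0$ of $\Theta\colon[-\infty,\infty]\to\Cb$, equivalently $\bigl(\varphi(\infty)-\varphi(-\infty)\bigr)/\pi$. Then $\Theta H^2$ has codimension $n$ in $H^2$, so by~(iii) $\Gamma_{\Theta^*}$ vanishes on a subspace of finite codimension, hence has finite rank and is compact. Finally, $|\Theta^*|=1$ on $\Rb$ gives $\|f\|^2=\|R_+(\Theta^*f)\|^2+\|\Gamma_{\Theta^*}f\|^2$ for $f\in H^2$, so $\|\Gamma_{\Theta^*}f\|=\|f\|$ iff $R_+(\Theta^*f)=0$ iff $\overline\Theta f\in\overline{H^2}$ iff $f\perp\Theta H^2$; hence $\{f\in H^2:\|\Gamma_{\Theta^*}f\|=\|f\|\}=H^2\ominus\Theta H^2$, of dimension $n$, the winding number.
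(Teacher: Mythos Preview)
Your proposal is correct, and for parts (ii)--(iv) it tracks the paper closely (the paper likewise exhibits the Hermite--Biehler inequality via the integrated Lagrange-type identity, identifies $\Theta=e^{2i\varphi}$ on $\Rb$, and for (iv) invokes de~Branges's reproducing-kernel framework and the orthogonality of the sampling kernels, citing Problems~48--49 of \cite{deB}). Parts (i) and (v), however, take genuinely different routes.

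For (i), the paper does not differentiate in the real spectral parameter. Instead it sets $\lambda=\kappa+i\varepsilon$, integrates $\tfrac{d}{dy}\bigl(\Psi^\dagger J\Psi/(2i\varepsilon)\bigr)$ over $[0,x]$ to obtain $W(\kappa,\kappa)=\tfrac{1}{\pi}\int_0^x\Psi^\dagger\Omega_1\Psi\,dy$ as the $\varepsilon\to0^+$ limit, and then invokes de~Branges's formula $\varphi'=\pi W(\kappa,\kappa)/|E(\kappa)|^2$ from \cite[Problem~48]{deB}. Your direct $\partial_\kappa$--Lagrange identity together with the elementary $\tan\varphi=h/f\Rightarrow(f^2+h^2)\varphi'=f\dot h-h\dot f$ is more self-contained and avoids the external reference; just note that the boundary term $\dot\Psi(0)^\top J\Psi(0)=0$ needs the remark that a holomorphic $\lambda\mapsto\Psi(0;\lambda)$ which is real for all $\lambda$ is necessarily constant.

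For (v), the paper does not argue that $\Theta$ is a finite Blaschke product. It instead extends $\Theta^*$ continuously to $[-\infty,\infty]$, applies Hartman's theorem \cite[Theorem~5.5]{Pe} to get compactness of $\Gamma_{\Theta^*}$, and then uses Toeplitz index theory: from the unitarity of the block matrix (\ref{Toeplitzblock}) one has $I-\Gamma_{\Theta^*}^\dagger\Gamma_{\Theta^*}=T_{\Theta^*}^\dagger T_{\Theta^*}$, so $\dim\ker(I-\Gamma_{\Theta^*}^\dagger\Gamma_{\Theta^*})=\dim\ker T_{\Theta^*}=\mathrm{ind}\,T_{\Theta^*}$, the last equal to the winding number of $\Theta$ by the standard Toeplitz index formula. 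Your argument---continuous inner function $\Rightarrow$ finite Blaschke product of degree $n$, hence $\Gamma_{\Theta^*}$ of finite rank with $\{f:\|\Gamma_{\Theta^*}f\|=\|f\|\}=H^2\ominus\Theta H^2$ of dimension $n$---is more elementary and gives the stronger conclusion of finite rank, but it leans on the structural fact (via the Poisson-integral/Carath\'eodory characterisation of continuous inner functions) that the paper sidesteps. The paper's route, by contrast, would apply verbatim to any unimodular symbol in $C(\Rb\cup\{\infty\})$, not just inner ones.
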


\begin{proof} (i) We verify that we have the conditions of Definition \ref{phase}. Simple calculations show that
\begin{equation} 
  \vert E(\lambda )\vert^2-\vert E(\bar\lambda )\vert^2 
  = 4\Im \bigl( h(x;\lambda )f(x;\bar\lambda )\bigr)
  = {\frac{2}{i}} \Psi (x; \lambda )^\dagger J \Psi (x; \lambda ).
\end{equation}
By hypothesis,  $\Psi (0; \lambda )$ is real. Then $\Psi (0;\lambda )^\dagger J\Psi (0; \lambda )=0$, so for $\lambda =\kappa +i\varepsilon =\bar \nu$, we have 
\begin{align}\label{speckerintegral}
   {\frac{\Psi (x;\lambda )^\dagger J\Psi (x; \lambda )}{2i\varepsilon}}
    &=\int_0^x \Bigl({\frac{d}{dy}}
       {\frac{\Psi (y;\lambda )^\dagger J\Psi (y; \lambda )}{2i\varepsilon}}\Bigr)\, dy\\
   &= \int_0^x \Psi (y; \lambda )^\dagger \Bigl( {\frac{\Omega 
        (x;\lambda )-\Omega (x; \lambda )^\dagger }{2\varepsilon i}}\Bigr)\Psi (y; \lambda )\, dy\\
   &=\int_0^x \Psi (y; \lambda )^\dagger \Omega_1 (y) 
            \Psi (y; \lambda )\, dy, 
\end{align}
We deduce that $\vert E(\lambda )\vert >\vert E(\bar \lambda )\vert$ for all $\Im \lambda >0$, thereby fulfilling the conditions for the relevant function spaces of exist. Also, the diagonal of the kernel is given by 
\begin{align}\label{diagW} W(\kappa ,\kappa )&=\lim_{\varepsilon\rightarrow 0+} 
{\frac{\vert E(\kappa +i\varepsilon )\vert^2-\vert E(\kappa -i\varepsilon )\vert^2}{4\pi \varepsilon}}\nonumber\\
&={\frac{1}{\pi }}\int_0^x \Psi (y; \kappa )^\dagger 
\Omega_1 (y)\Psi (y;\kappa )\, dy\qquad (\kappa\in \Rb ).\end{align}
We observe that for real initial conditions and $\kappa\in \Rb$, the entries of $\Psi (x;\kappa )$ are real, so
\[ \vert E(\kappa )\vert^2=\vert f(x;\kappa )-ih(x;\kappa )\vert^2=\Vert \Psi (x;\kappa )\Vert^2.\]
Now \cite[Problem 48]{deB} is easily solved by applying the Cauchy-Riemann equations, so we obtain
\begin{equation}\label{prob48} {\frac{d\varphi}{d\kappa}}={\frac{\pi W(\kappa,\kappa )}{\vert E(\kappa )\vert^2}},\end{equation}
which combined with (\ref{diagW}) yields the result.\par
\indent 
(ii) We follow \cite[Theorem 4.1]{B0}, and observe that $\Theta=E^*/E$ determines a bounded holomorphic function on $\{ z: \Im z>0\}$ with $\vert \Theta (\kappa )\vert =1$ for almost all $\kappa \in \Rb$. Then $\Theta (\kappa )=\overline{E(\kappa )}/E(\kappa )=e^{2i\varphi (\kappa )}$.\par 
\indent (iii) By Proposition \ref{DeBphaseprop}(ii), $\Theta^*$ gives the conjugate of the function $\Theta\in H^\infty$ on $\Rb$, so we have $ W(z,w)=E(z)\overline{E(w)}K_\Theta (z,w)$ where
\[K_\Theta (z,w)={\frac{1-\Theta (z)\overline{\Theta (w)}}{2\pi i(\bar w-z)}}\]
gives the reproducing kernel for the model space $H^2\ominus \Theta H^2$. 
Then $\Gamma_{\Theta^*}:H^2\rightarrow \overline{H^2}$ $\Gamma_{\Theta^*}f=R_-(\Theta^*f)$ gives a bounded Hankel operator. Now $H^2\ominus \Theta H^2$ is the range of the orthogonal projection $f\mapsto \Theta R_-(\Theta^* f)$, and $\Theta H^2$ is the null space of $\Gamma_{\Theta^*}$. The operation of multiplication by $\Theta^*$ on $L^2(\Rb;\Cb )$ gives a unitary, with block form
\begin{equation} \label{Toeplitzblock}\begin{bmatrix} T_{\Theta^*}& 0\\ \Gamma_{\Theta^*}& \tilde T_{\Theta^*}\end{bmatrix}\qquad \begin{matrix} H^2\\ \overline{H^2}\end{matrix}\end{equation}
where $T_{\Theta^*}\in\calL (H^2)$ is the Toeplitz operator with symbol $\Theta^*$, and $\tilde T_{\Theta^*}\in \calL (\overline{H^2})$ is multiplication by $\Theta^*$.\par 
(iv) As in \cite[Problem 49]{deB}, the functions
\[{\frac{W( z,\kappa_j)}{\overline{ E(\kappa_j)}}} ={\frac{E^*(z)e^{-2i\varphi (\kappa_j)} -E(z)}{2\pi i(z-\kappa_j )}}\]
belong to $K(E)$, and
\[\Bigl\langle {\frac{W( z,\kappa_j)}{\overline{ E(\kappa_j)}}},{\frac{W( z,\kappa_\ell )}{\overline{ E(\kappa_\ell)}}}\Bigr\rangle  ={\frac{E(\kappa_\ell) ( e^{2i\kappa_\ell-2i\kappa_j }-1)}{2\pi i(\kappa_\ell-\kappa_j ) \overline{E(\kappa_j)}}},\]
so the sequence is orthogonal by the choice of sampling sequence. The normalizing constants are given by (\ref{prob48}). The hypotheses allow for $(\kappa_j)$ to be an infinite sequence, so the sampling sequence is infinite. The case of a finite sequence is addressed in case (v), as follows.\par
\indent (v) By the assumptions on $\varphi$, we have $\lim_{x\rightarrow\infty}\Theta^*(x)=\lim_{x\rightarrow\infty}\Theta^*(-x)$, 
so we can extend $\Theta^*$ to a continuous and unimodular function $\Theta^*:[-\infty ,\infty ]\rightarrow \Cb$ such that $\Theta^*(-\infty )=\Theta^* (\infty )$. Hence by Hartman's theorem \cite[Theorem 5.5]{Pe}, $\Gamma_{\Theta^*}$ is compact, hence $T_{\Theta^*}$ is Fredholm; then
\begin{align} \dim{\hbox{null}} (I-\Gamma_{\Theta^*}^\dagger\Gamma_{\Theta^*})& =\dim{\hbox{null}}(T_{\Theta^*}^\dagger T_{\Theta^*})-\dim{\hbox{null}}(T_{\Theta^*}T_{\Theta^*}^\dagger )\nonumber\\
&=\dim{\hbox{null}}(T_{\Theta^*})-\dim{\hbox{null}}(T_{\Theta^*}^\dagger )\nonumber\\
&={\hbox{ind}} (T_{\Theta^*})
\end{align}
which by \cite{Pe} is given by the negative winding number about $0$ of $\Theta^*: [-\infty , \infty ]\rightarrow \Cb$, or equivalently the winding number about $0$ of $\Theta :[-\infty ,\infty ]\rightarrow \Cb$. Also, $\Vert\Gamma_{\Theta^*}\Vert \leq \Vert\Theta\Vert_{\infty}=1$, so 
$R=I-\Gamma_{\Theta^*}^\dagger\Gamma_{\Theta^*}$ satisfies $0\leq R\leq I,$ hence the Cauchy--Schwarz inequality $\vert\langle Rf,g\rangle\vert^2\leq \langle Rf,f\rangle \langle Rg,g\rangle$. Applying this repeatedly, we find that 
\[\{ f\in H^2: \Vert \Gamma_{\Theta^*}f\Vert=\Vert f\Vert\} =\{ f\in H^2: (I-\Gamma_{\Theta^*}^\dagger\Gamma_{\Theta^*})f=0\}\]
gives a vector space of dimension equal to the winding number about $0$ of $\Theta$, namely
  \[\int_{-\infty}^\infty {\frac{\Theta'(\kappa )}{\Theta (\kappa )}}{\frac{d\kappa}{2\pi i}}
=\int_{-\infty}^\infty \varphi (\kappa ){\frac{d\kappa}{\pi}}.
     \qedhere\]
\end{proof}

\begin{ex}
(i) For example, $E(z)=z+i$ gives $\Theta (z)=(z-i)/(z+i)$, so the image of $\Theta :[-\infty , \infty ]\rightarrow \Cb$ gives the unit circle.

(i) With $a>0$ the function $E(z)=\cos az-ia\sin az$ satisfies the conditions of Definition \ref{phase}, and the equations (\ref{genPW1}) and (\ref{genPW2}) determine the Paley-Wiener space of entire functions $PW(a)$. 
In this case, $H^2\ominus \Theta H^2$ has infinite dimension since
\[ \Theta (z)= {\frac{ (1+a) e^{2iaz}+(1-a)}{(1-a)e^{2iaz}+(1+a)}}\]
has infinite winding number about $0$ as $z$ describes $(-\infty ,\infty )$. 
\end{ex}

\begin{ex}\label{Schrodingerasmatrix} For the system
\begin{equation} {\frac{d}{dx}}\begin{bmatrix} 0&-1\\ 1&0\end{bmatrix}\begin{bmatrix}f\\ -h\end{bmatrix} =
\begin{bmatrix}\lambda -u&0\\ 0&1\end{bmatrix}
\begin{bmatrix} f\\ -h\end{bmatrix}\end{equation}
we have ${\frac{df}{dx}}=-h$ and $-{\frac{d^2f}{dx^2}}+uf=\lambda f$, so $E=f+i{\frac{df}{dx}}$. With $f(x; \kappa )$ real for 
$x,\kappa\in \Rb$, the expression $E=e^{-i\varphi }\vert E\vert$ gives the polar form of the curve $\kappa \mapsto f(x;\kappa )+if'(x;\kappa )$, in terms of the spectral parameter. We obtain an expression for this $f$ in Remark~\ref{BakerAkhiezer} via the Gelfand-Levitan equation. In Section \ref{S:xi}, we consider the corresponding Green's function $G(x,y;\lambda )$ and introduce the $\xi$ function (\ref{Krein}) 
from the polar form of $-G(x,x;\lambda )$, which is different from $W(\lambda ,\lambda )$.
\end{ex}

The spectral theory of $K$ is not to be confused with the spectral theory of (\ref{Hamiltonian}). Nevertheless, the differential equation leads to a related kernel via the following standard result.

\begin{prop}\label{specRKHS} Suppose that $\lambda\mapsto \Psi_\lambda $ gives a homomorphic function $\Cb\setminus \Rb \rightarrow L^2((0, \infty ), \Cb^{2\times 1})$ such that $\overline{\Psi_\lambda (x)}=\Psi_{\bar\lambda} (x)$ and 
$\Psi (0;\lambda )=[a(\lambda ), b(\lambda )]^\top.$
Then there exists a reproducing kernel Hilbert space of holomorphic functions on $\Cb\setminus \Rb$ with kernel
\begin{equation}\label{Lambda} \Lambda (\lambda ,\nu )={\frac{\bar a(\nu )b(\lambda )-a(\lambda )\bar b(\nu )}{\lambda-\bar\nu }}.\end{equation}
\end{prop}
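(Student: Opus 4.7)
The plan is to realize $\Lambda$ (up to an overall sign convention) as a Gram kernel for the Hamiltonian system (\ref{Hamiltonian}), which makes positive semi-definiteness transparent, and then to invoke the Moore--Aronszajn theorem to extract the unique reproducing kernel Hilbert space of holomorphic functions on $\Cb \setminus \Rb$ with kernel $\Lambda$. The Hermitian symmetry $\Lambda (\lambda ,\nu )=\overline{\Lambda (\nu ,\lambda )}$ is a direct algebraic check, once one evaluates $\overline{\Psi_\lambda (x)}=\Psi_{\bar\lambda} (x)$ at $x=0$ to deduce $\overline{a(\lambda )}=a(\bar\lambda )$ and $\overline{b(\lambda )}=b(\bar\lambda )$.

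The main computation is a Lagrange-type identity arising from (\ref{Hamiltonian}). Writing $\Psi_\lambda '=J(\lambda \Omega_1 +\Omega_0)\Psi_\lambda $ (via $J^{-1}=-J$) and exploiting that $\Omega_0 ,\Omega_1 $ are Hermitian while $J^\dagger =-J$, one differentiates to obtain
\[
\frac{d}{dx}\bigl(\Psi (x;\nu )^\dagger J \Psi (x;\lambda )\bigr) = (\bar\nu - \lambda )\,\Psi (x;\nu )^\dagger \Omega_1(x) \Psi (x;\lambda ).
\]
Integrating from $0$ to $\infty $ (with the boundary contribution at infinity vanishing) and evaluating $\Psi (0;\nu )^\dagger J\Psi (0;\lambda )=a(\lambda )\overline{b(\nu )}-\overline{a(\nu )}b(\lambda )$ at the origin yields
\[
\int_0^\infty \Psi (x;\nu )^\dagger \Omega_1(x) \Psi (x;\lambda )\, dx = \frac{\overline{a(\nu )}b(\lambda )-a(\lambda )\overline{b(\nu )}}{\bar\nu -\lambda },
\]
which coincides with $\Lambda (\lambda ,\nu )$ up to sign. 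This exhibits the kernel as the Gram matrix of the family $\{\Omega_1^{1/2}\Psi_\lambda :\lambda \in \Cb \setminus \Rb\}$ in the weighted $L^2$-space, and positive semi-definiteness is automatic. The reproducing kernel Hilbert space can then be realized concretely as the image of $\overline{\spann}\{ \Omega_1^{1/2}\Psi_\lambda :\lambda \in \Cb \setminus \Rb\}$ under the transform $f\mapsto \bigl( \lambda \mapsto \langle f,\Omega_1^{1/2}\Psi_\lambda \rangle \bigr)$, which sends $\Omega_1^{1/2}\Psi_\nu $ to $\Lambda (\cdot ,\nu )$ and whose image consists of holomorphic functions on $\Cb \setminus \Rb$ thanks to the holomorphic dependence $\lambda \mapsto \Psi_\lambda $ into $L^2$.

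The principal technical obstacle is justifying that the boundary contribution at $x=\infty $ in the Lagrange identity genuinely vanishes, since the bare assumption $\Psi_\lambda \in L^2((0,\infty );\Cb^{2\times 1})$ does not of itself force pointwise decay of $\Psi (x;\nu )^\dagger J \Psi (x;\lambda )$. The standard remedy is to pass to a subsequence $R_n \to \infty $ along which the boundary term tends to zero, using that the right-hand side of the integrated identity is already finite by Cauchy--Schwarz whenever $\Omega_1$ is locally bounded; equivalently, one can work from the outset in $L^2((0,\infty );\Omega_1\, dx)$, which is the natural Hilbert space on which the Hamiltonian system defines a self-adjoint operator.
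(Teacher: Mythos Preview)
Your proposal is correct and follows essentially the same route as the paper: both derive the Lagrange-type identity $\frac{d}{dx}\bigl(\Psi(x;\nu)^\dagger J\Psi(x;\lambda)\bigr)=(\bar\nu-\lambda)\Psi(x;\nu)^\dagger\Omega_1(x)\Psi(x;\lambda)$ from (\ref{Hamiltonian}), integrate over $(0,\infty)$, evaluate the boundary term at $x=0$ via $\Psi(0;\lambda)=[a(\lambda),b(\lambda)]^\top$, and read off positive semi-definiteness from the resulting Gram representation $\Lambda(\lambda,\nu)=\int_0^\infty \Psi(x;\nu)^\dagger\Omega_1(x)\Psi(x;\lambda)\,dx$ before invoking the standard RKHS construction. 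Your treatment is in fact more careful than the paper's on two points: you flag the sign discrepancy (the paper's equation (\ref{Lambdaphi}) silently flips $\bar\nu-\lambda$ to $\lambda-\bar\nu$), and you address the vanishing of the boundary contribution at infinity, which the paper passes over without comment.
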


\begin{proof} By integrating the differential equation (\ref{Hamiltonian}), we have
  \begin{align}
  (\bar \nu-\lambda )\int_0^x \Psi (y;\nu )^\dagger \Omega_1(y)\Psi (y;\lambda ) \, dy
    &=\int_0^x{\frac{d}{dy}}\Bigl( \Psi (y;\nu )^\dagger J\Psi (y;\lambda ) \Bigr)\, dy\\
    &=\Psi (x;\nu )^\dagger J\Psi (x;\lambda )-  \Psi (0;\nu )^\dagger J\Psi (0;\lambda ).
  \end{align}
Then taking $x\rightarrow\infty$, one has 
  \begin{equation}\label{Lambdaphi}
   \Lambda (\lambda ,\nu )
   = {\frac{\bar a(\nu )b(\lambda )-a(\lambda )\bar b(\nu )}{\lambda-\bar\nu }}
   =\int_0^\infty \Psi (x;\nu )^\dagger \Omega_1(x)\Psi (x;\lambda ) \,dx.
  \end{equation}
We observe that $\Lambda (\lambda ,\nu )=\overline{\Lambda (\nu , \lambda )}$ and $\lambda \mapsto \Lambda (\lambda , \nu )$ is holomorphic. Now by (\ref{Lambdaphi}),  $[\Lambda (\lambda_j, \lambda_k )]_{j,k=1}^N$ is positive semidefinite for all $\lambda_1, \dots ,\lambda_N$ such that $\Im \lambda_j>0$. Hence $\Lambda $ determines a reproducing kernel Hilbert space on $\{ \lambda \in \Cb :\Im \lambda >0\}$, and the elements are holomorphic functions.
\end{proof}

\begin{rem} (i) Note that $f(x,\lambda )=\cos (\sqrt{\lambda }x)$ and $h(x;\lambda )=(\sin (\sqrt{\lambda }x))/\sqrt{\lambda}$ defined via power series are entire, and 
\[ i\sqrt{\lambda}={\frac{-1}{\sqrt{2}}}+{\frac{1}{\pi}}\int_0^\infty \Bigl({\frac{1}{t-\lambda }}-{\frac{t}{1+t^2}}\Bigr)\sqrt{t}\, dt\qquad (\lambda\in \Cb\setminus [0, \infty ))\]
gives a Herglotz function with $\Re i\sqrt{\lambda }<0$ for $\Im \lambda>0$, such that $f(x;\lambda )+i\sqrt {\lambda} h(x;\lambda )$ belongs to $L^2((0, \infty ); \Cb )$ for $\Im \lambda >0$. \par

(ii) For $\Omega_0(x)=0$ for all $x\geq 0$,  the condition
\[\int_0^\infty \trace\,\Omega_1(x)\, dx=\infty\] 
ensures that we have the limit point case, so the system is self-adjoint, for some suitably boundary condition at $x=0$. Then there exists a Herglotz function $m_+(\lambda )$ such that
\begin{equation}\label{Weylm} \Psi (x;\lambda )=\Phi (x;\lambda ) \begin{bmatrix} 1\\ m_+(\lambda )\end{bmatrix} \in L^2( (0, \infty );\Omega_1(x)dx; \Cb^{2\times 1})\end{equation}
as in \cite{LW}; this $m_+(\lambda )$ is known as Weyl's function. The spectral measure $\mu$ is determined by
  \begin{equation}\label{spectralmeasure} 
  m_+(\lambda) = \alpha \lambda +\beta 
    + \int_{-\infty}^\infty \Bigl( {\frac{1}{\kappa -\lambda}}-{\frac{\kappa }{1+\kappa^2}}\Bigr) \, \mu (d\kappa )
   \end{equation}
and we take $m_+(\bar\lambda )=\overline{m_+(\lambda )}.$ The kernel from Proposition~\ref{specRKHS} is given by  
\[ \Lambda (\lambda, \nu )={\frac{m_+(\lambda )-\overline{m_+(\nu )}} {\lambda-\bar\nu }}=
\alpha +\int_{-\infty}^\infty{\frac{ \mu (d\kappa )}{(\kappa -\lambda )(\kappa -\bar\nu )}}.\]
For differential equations on $\Rb$, we introduce a Herglotz function $m_-$ such that 
\begin{equation}\label{Weylm-} \Psi (x;\lambda )=\Phi (x;\lambda ) \begin{bmatrix} 1\\ -m_-(\lambda )\end{bmatrix} \in L^2( (-\infty ,0 );\Omega_1(x)dx; \Cb^{2\times 1});\end{equation}
this convention is common and matches \cite{GS}, but is not universal. 

(iv) 
If the spectrum is absolutely continuous, then the density of states is determined by the diagonal of $\Lambda $ via
\begin{equation}\label{specdensity}2\pi i{\frac{d\mu}{d\kappa }}(\kappa )=\lim_{\varepsilon\rightarrow 0+}\bigl( m(\kappa +i\varepsilon )-
m(\kappa -i\varepsilon )\bigr).\end{equation}
at the Lebesgue points of the spectrum. As discussed in \cite[Section 5]{GS}, the absolute continuity of the spectrum does not change if one moves the boundary condition from $x=0$ to some other point.
\end{rem}

\section{Extensions, cocycles and Hankel products}\label{S:Extensions}

 Products of Hankel operators arise as noncommutative differentials, which are defined as follows.

\begin{defn} \label{differentials} Let $\calR$ and $\calL$ be unital complex algebras.
\begin{enumerate}[(i)]
\item We define the space of noncommutative differentials $\Omega^1 \calR$ as the subspace of $\calR \otimes \calR$ that makes the sequence
  \begin{equation}\label{Omega1again}
   0 \longrightarrow \Omega^1 \calR \longrightarrow \calR \otimes \calR \xlongrightarrow{\mu} \calR \longrightarrow 0
  \end{equation}
exact, where $\mu (a\otimes b)=ab$ is the multiplication map, and $\Omega^1\calR$ is spanned by 
$a\,db = ab\otimes 1 - a \otimes b$ for $a,b\in \calR$. Evidently $\Omega^1\calR$ is a left $\calR$-module. 
\item Let $\rho: {\calR}\rightarrow {\calL}$ be a $\Cb$-linear map, and define its curvature to be $\varpi: {\calR}\otimes {\calR}\rightarrow {\calL}$ where $\varpi (f_1,f_2)=\rho (f_1f_2)-\rho (f_1)\rho (f_2)$, so $f_1df_2\mapsto \varpi (f_1,f_2)$ is a linear map $\Omega^1\calR \rightarrow\calL$. (The curvature is related to the notion of semicommutator on operator theory. This is not to be confused with the curvature of modules discussed in \cite[p.~116]{DP}). The dual space of $\Omega^1{\calR}$ is identified in Definition~\ref{Hochschildcob}. 
  \item
Suppose further that $\calR$ is commutative. Then $\Omega^1R$ may be regarded as an ideal in $\calR\otimes \calR$ with square ideal $(\Omega^1\calR)^2$, so the quotient of ideals $\Omega_\calR^1 = \Omega^1 \calR/(\Omega^1 \calR)^2$
is a left $\calR$-module called the space of first-order K{\"a}hler differentials; see \cite[p.~173]{H}. 
\item The space of $n^{th}$-order K{\"a}hler differentials $\Omega^n_\calR$ is the skew-symmetric subspace $\Omega^1_\calR\wedge \dots\wedge \Omega^1_\calR$ of $\otimes_{j=1}^n \Omega^1_\calR$ with $n$ factors, in which our tensors are formed over the field $\Cb$.  
\end{enumerate}
\end{defn}

\begin{defn} Let ${\calA}$ be an associative unital complex algebra such that $\Omega^1 {\calA}$ is a projective ${\calA}$-bimodule in the sense that $\Omega^1 {\calA}$ is a direct summand of a free ${\calA}$-bimodule. Then ${\calA}$ is said to be \textbf{quasi-free}.
\end{defn}

In Propositions 3.3 and 5.3 of \cite{CQ}, there are various equivalent characterizations of this property.
If $\calA$ is quasi-free, then $M_{n\times n}(\calA )$ is also quasi-free. This enables one to generate examples relevant to linear systems. For ${\calA} = \Cb[x]$, the bimodule $\Omega^1 {\calA}$ is a direct summand of the free ${\calA}$-bimodule ${\calA} \otimes {\calA}$, so $\Omega^1 {\calA}$ is projective and $M_{n\times n}(\Cb [x])$ is quasi-free. 

In Propositions \ref{diagonaldiff} and \ref{DeBphaseprop}, the diagonals of kernels emerge in the course of calculations involving products of Hankel operators. In the following section, we express these in a unified framework which will enable us to formulate results in later sections more easily, such as Theorem~\ref{Lyapunovthm}. 

Suppose
that ${\calA}$ is a commutative and unital Banach algebra, so the projective tensor product ${\calA}\hat\otimes {\calA}$ is likewise. Let $\calA'$ denote the dual space of $\calA$, let $\Xb$ be the maximal ideal space, identified with a subspace of $\calA'$, and let $a\mapsto \hat a$ be the Gelfand transform ${\calA} \rightarrow C({\Xb}; \Cb )$. From the formula $\psi (a_0\otimes b)=\psi (a_0\otimes 1)\psi (1\otimes a_1)$, Tomiyama observed that the maximal ideal space of ${\calA}\hat\otimes {\calA}$ is ${\Xb}\times {\Xb}$. Let ${\Xb}_\Delta =\{ (\phi , \phi ): \phi \in {\Xb}\}$  be its diagonal. 

\begin{prop}\label{Tomi} 
Let ${\calA}$ be semisimple.
\begin{enumerate}[(i)]
\item
 Under the Gelfand transform ${\calA}\hat \otimes {\calA}\rightarrow C({\Xb}\times {\Xb}; \Cb),$  the space $\Omega^1{\calA}$ is mapped into $\{ \hat a\in C({\Xb}\times {\Xb}; \Cb): \hat a\vert {\Xb}_\Delta =0\}$.
\item For every nonzero derivation $\partial:{\calA}\rightarrow {\calA}$, there exists a unique map $a_0 da_1\mapsto \widehat{(a_0\partial a_1)}$ which is nonzero.
\end{enumerate}
\end{prop}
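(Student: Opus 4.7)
The plan for part (i) is direct computation on generators. The Gelfand transform $\calA \to C(\Xb)$ is a contractive unital algebra homomorphism, so by Tomiyama's identification together with the universal property of the projective tensor product it extends to a continuous algebra homomorphism $\calA \hat\otimes \calA \to C(\Xb \times \Xb)$ that sends $a \otimes b$ to $(\phi,\psi)\mapsto \hat a(\phi)\hat b(\psi)$. Evaluating on a generator yields
\begin{equation*}
   a\, db = ab \otimes 1 - a \otimes b
     \;\longmapsto\; (\phi,\psi) \mapsto \hat a(\phi)\hat b(\phi) - \hat a(\phi)\hat b(\psi),
\end{equation*}
which vanishes whenever $\phi = \psi$. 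Since the continuous functions vanishing on $\Xb_\Delta$ form a closed linear subspace of $C(\Xb\times\Xb)$ and the elements $a\, db$ span $\Omega^1\calA$, part (i) follows by linearity.

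For part (ii), the strategy is to realise the claimed correspondence as the restriction to $\Omega^1\calA$ of a linear map defined on all of $\calA\otimes \calA$. Define $\Phi:\calA\otimes \calA \to C(\Xb)$ by bilinear extension of $\Phi(a\otimes b) = -\widehat{a\,\partial b}$, which is unambiguous on the algebraic tensor product. Using $\partial 1 = 0$ (from Leibniz applied to $1=1\cdot 1$), one computes
\begin{equation*}
    \Phi(a_0\, da_1)
      = \Phi(a_0 a_1 \otimes 1 - a_0 \otimes a_1)
      = -\widehat{(a_0 a_1)\partial 1} + \widehat{a_0\,\partial a_1}
      = \widehat{a_0\,\partial a_1},
\end{equation*}
so the restriction $\Phi\vert_{\Omega^1\calA}$ has precisely the prescribed effect. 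Uniqueness follows because $\Omega^1\calA$ is spanned by the generators $a_0\,da_1$, so a linear map is determined by its values on them. For the nonvanishing claim, choose $a\in \calA$ with $\partial a\neq 0$; semisimplicity makes the Gelfand transform injective, so $\widehat{\partial a}\neq 0$ in $C(\Xb)$, and therefore $\Phi(1\cdot da) = \widehat{\partial a}$ witnesses that the induced map is nonzero.

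The argument is largely mechanical; there is no real hard step. The one point of care is the sign convention in the definition of $\Phi$, which is forced by the identity $\partial 1 = 0$, and the small observation that for (i) one uses the Gelfand transform of $\calA\hat\otimes \calA$ while for (ii) one uses the Gelfand transform of $\calA$ only. Conceptually the construction in (ii) is the Gelfand image of the standard pairing identifying $\Der(\calA,\calA)$ with $\Hom_\calA(\Omega^1\calA,\calA)$ for commutative $\calA$, combined with injectivity of the Gelfand representation on a semisimple base.
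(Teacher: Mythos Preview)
Your proof is correct and follows essentially the same approach as the paper's: for (i) the paper phrases the computation more abstractly by noting that restriction to $\Xb_\Delta$ corresponds to composition with $\mu$ (so $\ker\mu$ maps to functions vanishing on the diagonal), while for (ii) the paper invokes the universal property of $\Omega^1\calA$ (citing \cite[Proposition 8.4.4]{QB}) rather than constructing the lift explicitly via $\Phi(a\otimes b)=-\widehat{a\,\partial b}$. The content is identical; your version simply unpacks the universal property by hand and makes the uniqueness and nonvanishing claims explicit.
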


\begin{proof} (i) Here ${\Xb}_\Delta$ is the image of the canonical inclusion ${\Xb}\rightarrow {\Xb}\times {\Xb}:$ $\phi\mapsto (\phi ,\phi)$, which is compatible with $\phi \mapsto\phi\circ \mu$, and $\Omega^1{\calA}$ is the nullspace of $\mu$.

(ii) We can regard ${\calA}$ as a natural ${\calA}$-bimodule for multiplication. Then the map $\Omega^1{\calA}\mapsto {\calA}:   a_0 da_1\mapsto a_0 \partial a_1$ exists by the universal property of $\Omega^1{\calA}$; see \cite[Proposition 8.4.4]{QB}. The Gelfand transform is injective on the semisimple Banach algebra ${\calA}$.  
\end{proof}

\begin{rem} (i) 
The algebra $M_{n\times n}(\calA)$ has centre ${\calA}$ and, as Schelter observed, $M_{n\times n}(\calA)$ is quasi-free if and only if $\Omega^1M_{n\times n}({\calA})$ is locally projective at at each point of ${\Xb}_\Delta$; see \cite[Lemma 3.2]{Sch}. \par
(ii) 
Let ${\calA}$ be a unital and finitely generated complex algebra, so there is an exact sequence of complex algebras and homomorphisms
  \[ 0\longrightarrow J
      \longrightarrow \Cb[x_1, \dots, x_n]
      \longrightarrow {\calA}
      \longrightarrow 0.  \]
Suppose further that ${\calA}$ has no nonzero nilpotent elements, or equivalently $J$ is a radical ideal; then ${\calA}=\Cb[\Xb ]$ is the coordinate ring for some complex variety $\Xb =\{ (y_1, \dots, y_n): f(y_1, \dots, y_n)=0,\, \forall f\in J\}$; see \cite[p.~233]{Sha}. 
It follows from the Hochschild--Konstant--Rosenberg theorem that if ${\calA}$ is quasi-free, then the components of $\Xb $ have dimension $\leq 1$, hence are points and nonsingular affine curves; see  \cite[p.~273]{CQ}. Suppose further that $\Xb$ is irreducible; then ${\calA}$ is an integral domain and its field of fractions $\Cb(\Xb )$ defines the field of rational functions on  $\Xb$; see \cite{Sha}. Thus quasi-free algebras are an appropriate noncommutative analogue of the coordinate algebra on a Riemann surface. See Theorem \ref{BurchnallChaundy}.  
\end{rem}

\begin{defn}\label{extension} Let $\calB$ be any complex Banach algebra. By an extension of $\calB$ we mean a sequence of Banach algebras and continuous complex algebra homomorphisms such that  
    \[ 0\xlongrightarrow{} \calJ
        \xlongrightarrow{} \calE
        \xlongrightarrow{\pi} \calB
        \xlongrightarrow{} 0.\]
is exact. A section of $\pi$ is a continuous linear map $\rho:\calB\rightarrow \calE$ such that $\pi\circ \rho (a)=a$ for all $a\in\calB$.
\end{defn}

\begin{ex}
Let $X$ be any compact metric space, $\calK$ the space of compact operators on $H$ and 
 \[ 
  0 \xlongrightarrow{} \calK
    \xlongrightarrow{} \calE
    \xlongrightarrow{\pi} C(X; \Cb) 
    \xlongrightarrow{0} 0 \]
an exact sequence of $C^*$ algebras and $*$-homomorphisms, giving an extension of $C(X; \Cb )$. Then there exists a positive linear map $\rho :C(X; \Cb )\rightarrow \calE$ such that $\pi \circ \rho (a)=a$ for all $f\in C(X; \Cb )$ by Andersen's theorem \cite{And}. This $\rho$ is generally not a homomorphism. We are interested in extensions such that one can choose $\rho$ to respect a fine structure associated with (non-closed) ideals in $\calK$ such as the Schatten classes, and (non-closed) subalgebras of $C(X; \Cb )$. There is a question of finding subalgebras such that $\rho$ restricts to a homomorphism on the subalgebra, and of measuring how much $\rho$ itself deviates from being a homomorphism. To make these ideas precise, we recall some definitions from cyclic theory.  
\end{ex}

For a complex unital algebra $\calR$, there is a unique differential graded algebra $\Omega \calR=\oplus_{n=-\infty}^\infty \Omega^n \calR$ such that $\Omega^0\calR=\calR$ with a graded differential $d:\Omega^n\calR\rightarrow \Omega^{n+1}\calR$ with the following universal property. Given any differential graded algebra $S$ and homomorphism $u:\calR\rightarrow S$, there is a unique homomorphism of differential graded algebras $\Omega \calR\rightarrow S$ that extends $u$. 

\begin{defn}\label{Hochschildhom} 
Let $b:\Omega \calR\rightarrow \Omega \calR$ be the linear map determined by $b:\Omega^{n+1}\calR\rightarrow \Omega^n\calR$ $b(\omega da)=(-1)^n(\omega a-a\omega )$. Then $b^2=0$, so there is a complex 
  \[0\longleftarrow \calR\longleftarrow \Omega^1 \calR\longleftarrow\Omega^2\calR\longleftarrow \cdots \]
with differential $b$ of order $(-1)$. Then the Hochschild homology $HH_*(\calR)$ is the homology of this complex.
\end{defn} 
 
\begin{rem}\label{HHcalculation} (i) Consider the commutator subspace $[\calR,\calR]=\spann\{ [r,s]: r,s\in \calR\}.$ Then $[\calR,\calR]=\{0\}$ if and only if $\calR$ is commutative, and generally $HH_0(\calR)=\calR/[\calR,\calR]$. We consider this quotient in the next section. By \cite[p.~21]{L}, the matrix algebra $M_{n\times n}(\calR )$ satisfies $HH_j(M_{n\times n}(\calR ))=HH_j(\calR )$ for $j,n=0, 1, \dots$.

(ii) For commutative unital $\calR$, there is an injective map $\Omega^j_\calR\rightarrow HH_j(\calR)$ from the K\"ahler differentials to the Hochschild homology of $\calR$ by \cite[Prop 7.2.3]{QB} for $j=1, 2, \dots $which is 
a bijection for $j=1$ by \cite[Prop 1.1.10]{L}. Nevertheless, the formula for the $1$-cyclic cocycle involves (\ref{Jacobianidentity}), which involves $\Omega^2_\calR$ for $\calR=\calD$.
\end{rem}

We consider the dual space of $\Omega^1{\calR}$ in Definition \ref{Hochschildcob} as in \cite[section 7.2]{QB}.

\begin{defn}\label{Hochschildcob} Let $\calD$ a complex unital algebra.
 \begin{enumerate}[(i)] \item  Let $\varphi_n:{\calD}^{n+1}\rightarrow \Cb$ a multilinear functional. Then the Hochschild coboundary is $(b\varphi_n ):{\calD}^{n+2}\rightarrow\Cb $ given by 
\[ (b\varphi_n )(a_0, \dots, a_{n+1})=\sum_{j=0}^n (-i)^j\varphi_n(a_0, \dots, a_ja_{j+1}, \dots ,a_n)+(-1)^{n+1} \varphi_n(a_{n+1}a_0, \dots, a_n).\]
If $(b\varphi_n )=0$, then we call $\varphi_n$ a Hochschild $n$-cocycle.
\item  Suppose further that $\varphi_n(a_1, \dots a_n,a_0)=(-1)^n\varphi_n (a_0, \dots, a_n)$. Then $\varphi_n$ is a cyclic $n$-cocycle. By \cite[p.~186]{Co}, such a $\varphi_n$ is equivalent to a trace $\tilde\varphi_n:\Omega^n\calD\rightarrow \Cb$ via $\varphi_n(a_0, \dots, a_n)=\tilde\varphi_n (a_0da_1\dots da_n)$.
  \item 
An even normalized cochain is a $\Cb$-linear map $\varphi_{2m}: {\calD}^{2m+1}\rightarrow {\Cb}$ 
such that 
  \[ \varphi_{2m} (a_0,a_1,\dots , a_{2m})=0 \] 
if $a_j=1$ for some $j=1, \dots, 2m$.
\end{enumerate}
\end{defn}

\begin{ex}\label{Manifoldcocylce} Let $\Xb$ be a two dimensional Riemannian manifold with volume measure $d\mu$ and $P:\Xb\rightarrow \Cb$ a function integrable with respect to $\mu$. Then on the algebra $C^\infty (\Xb ; \Cb )$, there is a cyclic $1$-cocycle $\varphi :C^\infty (\Xb ; \Cb)\times C^\infty (\Xb , \Cb )\rightarrow \Cb $ given by
\begin{equation}\label{onecocyclce}
  \varphi (f,h)=\iint_\Xb P(x,y) {\frac{\partial (f,h)}{\partial (x,y)}}\mu \,(dxdy).
\end{equation}
Clearly $\varphi (f,h)=-\varphi (h,f)$ and by elementary calculus,
\begin{equation}\label{Jacobianidentity} 
  {\frac{\partial (f_0f_1,f_2)}{\partial (x,y)}} 
        - {\frac{\partial (f_0,f_1f_2)}{\partial (x,y)}}
        + {\frac{\partial (f_2f_0, f_1)}{\partial (x,y)}}
        =0,\end{equation}
hence $\varphi (f_0f_1,f_2)-\varphi (f_0,f_1f_2)+\varphi (f_2f_0,f_1)=0$, so $b\varphi =0$. 
One can regard (\ref{onecocyclce}) as determining $\tilde\varphi : \Omega^1C^\infty (\Xb ;\Cb )\rightarrow \Cb $,  $\tilde\varphi (fdh)=\varphi (f,h)$, or as a linear functional $\Omega^2_{C^\infty (\Xb; \Cb )}\rightarrow \Cb $ such that $df\wedge dh\mapsto \varphi (f,h)$. We encounter variants of this formula in Proposition \ref{cryptotrace} and Corollary \ref{hyperprincipal}. 
\end{ex}

\begin{defn} Let $\calJ$ be an ideal in $\calL (H)$. Let $\rho :\calD\rightarrow \calL (H)$ a $\Cb$-linear map such that the curvature takes values in $\calJ$. Then $\rho$ is said to be a homomorphism modulo $\calJ$.
\end{defn}

\begin{ex} Let $\calJ={\calL}^m(H)$ so the ideal generated by $m$-fold products of elements of $\calJ$ is $\calJ^m={\calL}^1(H)$, the trace class operators and suppose that $\varpi$ takes values in $\calJ$, so
\[ \varphi_{2m} (a_0, a_1, \dots ,a_{2m})=\trace\bigl( \rho (a_0)\varpi (a_1, a_2) \dots \varpi (a_{2m-1}, a_{2m})\bigr)\]
gives a normalized even cochain. We can write $a_0da_1da_2\mapsto \rho (a_0)\varpi (a_1,a_2)$.
\end{ex}


The investigation of cocycles on algebraic curves was raised by Douglas and Yan \cite[p.~212]{DY}, and we address this in the following proposition. One can deduce this result from \cite[Theorem~5, p.~75/291]{Co} or (1.5) of \cite{Qi}, and we give a more explicit formula (\ref{cocycleformula}) for the cocycle.

\begin{prop}\label{Toeplitzcocycle}  Let $\calD = \calD_{\partial \Db}$ be the subalgebra of $L^\infty(\partial \Db; \Cb)$ as in Definition \ref{Dirichletalgebra}. 
\begin{enumerate}[(i)]
\item The Toeplitz map $\rho: \calD\to \calL(L^2(0,\infty);\Cb)$, $f \mapsto T_f$ gives a homomorphism modulo ${\calL}^1(L^2(0, \infty );\Cb )$, and the associated curvature is a product of Hankel integral operators
  \begin{equation}\label{semicommutator} \varpi (f_1,f_2)=\rho(f_1f_2)-\rho (f_1)\rho (f_2)=\Gamma_{\bar f_1}^\dagger\Gamma_{f_2}.\end{equation}
  \item The range of $\varpi: \Omega^1\calD\rightarrow \calL^1( (L^2(0, \infty ))$ is the set of trace-class integral kernels $K\in L^2((0, \infty )\times (0, \infty ); \Cb )$ such that ${\frac{d}{dt}}K(x+t,y+t)$ is a finite rank kernel, namely a product of vectorial Hankel integral operators.
\item There is a cyclic $1$-cocycle
   \begin{equation}\label{cocycleformula}
     \varphi_1(f_1,f_2) =
       \frac{1}{2\pi i}  \iint_\Db \frac{\partial (f_1,f_2)}{\partial (x,y)} \, dxdy           \qquad (f_1,f_2\in \calD ).
       \end{equation}
\end{enumerate}
\end{prop}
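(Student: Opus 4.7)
\medskip

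\noindent\textbf{Plan for Proposition \ref{Toeplitzcocycle}.} For part (i) I would first establish the standard Brown--Halmos-type identity. Writing $P_+$ for the projection onto the Hardy space (identified with $L^2((0,\infty);\Cb)$ via the Laplace transform) and $P_- = I - P_+$, the factorization $M_{f_1 f_2} = M_{f_1}(P_+ + P_-)M_{f_2}$ compressed by $P_+$ on both sides yields
\[
  T_{f_1 f_2} - T_{f_1} T_{f_2} = (P_+ M_{f_1} P_-)(P_- M_{f_2} P_+) = \Gamma_{\bar f_1}^\dagger \Gamma_{f_2},
\]
which is exactly (\ref{semicommutator}). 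To see the curvature lands in $\calL^1$, I invoke Lemma \ref{boundedHankel}(ii) combined with the Douglas-type identity (\ref{DouglasDirichlet}): finiteness of the boundary Dirichlet integral (\ref{Dirichlet}) for $f_j \in \calD$ forces $\Gamma_{f_j}$ to be Hilbert--Schmidt, so the product of two such operators is trace class.

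For part (ii), the forward inclusion is immediate from (i) combined with Proposition \ref{diagonaldiff}(\ref{Hankprod}): every $\varpi(f_1,f_2)$ is a Hankel product whose kernel has finite-rank diagonal derivative. For the converse, suppose $K \in \calL^1$ has kernel $k$ with $\partial_\Delta k(x,y) = -\sum_{j=1}^N \phi_j(x)\psi_j(y)$ of rank at most $N$. Proposition \ref{diagonaldiff}(ii) supplies $\sigma_t(K) \to 0$ strongly as $t\to\infty$; integrating $\frac{d}{dt} k(x+t,y+t) = \partial_\Delta k(x+t,y+t)$ over $t \in (0,\infty)$ then reconstructs
\[
  k(x,y) = \sum_{j=1}^N \int_0^\infty \phi_j(x+t)\psi_j(y+t) \, dt,
\]
exhibiting $K$ as a vectorial Hankel product of the same form computed in the proof of Proposition \ref{diagonaldiff}(\ref{Hankprod}).

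For part (iii) I apply Stokes' theorem to express $\varphi_1$ as a boundary integral,
\[
  \varphi_1(f_1,f_2) = \frac{1}{2\pi i}\iint_\Db df_1 \wedge df_2 = \frac{1}{2\pi i}\int_{\partial\Db} f_1 \, df_2,
\]
the right-hand pairing being well defined and continuous on $\calD \times \calD$ via the Douglas identity underlying (\ref{Dirichlet}). Antisymmetry, i.e.\ the cyclic condition in degree one, is $\int_{\partial\Db} d(f_1 f_2) = 0$, and the Hochschild cocycle identity $b\varphi_1(f_0,f_1,f_2) = 0$ then reduces via the Leibniz rule and commutativity of $\calD$ to
\[
  \int_{\partial\Db} f_0 f_1\, df_2 - \int_{\partial\Db} f_0\, d(f_1 f_2) + \int_{\partial\Db} f_2 f_0\, df_1 = 0,
\]
or equivalently follows from the pointwise Jacobian identity (\ref{Jacobianidentity}) of Example \ref{Manifoldcocylce} applied to arbitrary smooth extensions (the area integral depending only on boundary values).

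The main obstacle I anticipate lies in part (ii)'s converse: justifying that the reconstructed integral converges in the $L^2$ sense on $(0,\infty)\times(0,\infty)$ and that the profile vectors $(\phi_j,\psi_j)$ extracted from $\partial_\Delta k$ match genuine Hardy-space symbol pairs, rather than only producing a more general vectorial Hankel product. A secondary delicacy is interpreting the boundary line integral $\int_{\partial\Db} f_1\, df_2$ as a distributional pairing for $f_j \in \calD$, for which one must rely on the Douglas boundary integral rather than pointwise differentiation.
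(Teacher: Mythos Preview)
Your approach to (i) is essentially the same as the paper's: both rely on the Brown--Halmos semicommutator identity and the Hilbert--Schmidt estimate coming from the Dirichlet norm. The paper spends a bit more space on the unitary transfer from $H^2(\Db)$ to $L^2((0,\infty);\Cb)$, but the content is identical.

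For (iii) there is a genuine difference of route. The paper does not start from the area integral and verify the cocycle identities by Stokes and Leibniz as you do. Instead it \emph{defines} $\varphi_1(f_1,f_2)=\trace\bigl(\varpi(f_1,f_2)-\varpi(f_2,f_1)\bigr)=\trace\bigl([T_{f_1},T_{f_2}]\bigr)$ using part (i), and then invokes the Helton--Howe formula to identify this trace with the Jacobian area integral (\ref{cocycleformula}). The Hochschild coboundary is then checked by expanding $\trace(\varpi(f_1f_2,f_3)-\varpi(f_3,f_1f_2)+\cdots)$ directly in terms of $\rho$ and using the trace property; the Jacobian identity (\ref{Jacobianidentity}) appears as a parallel justification. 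What the paper's route buys is the explicit operator-theoretic interpretation of $\varphi_1$ as the trace of a Toeplitz commutator, which is the whole point of placing (iii) after (i) and (ii). Your direct calculus argument verifies the cocycle property of the integral formula as a standalone object, which is correct for the statement as written but leaves the link to $\varpi$ implicit.

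For (ii), your self-diagnosed obstacle is real: matching the profile vectors $(\phi_j,\psi_j)$ extracted from $\partial_\Delta k$ to symbols in $\calD$ is not addressed by the integration-back argument you sketch, and the paper's own proof of (ii) is in fact rather telegraphic on this point---it records the trace formula $\trace(\Gamma_{\bar f_1}^\dagger\Gamma_{f_2})=\sum_{\ell\ge0}(\ell+1)\hat f_1(\ell)\hat f_2(-\ell)$ and the Hilbert--Schmidt conclusion without spelling out the surjectivity direction. So you are not missing a trick present in the paper; the converse inclusion is simply left at the level of the characterization in Proposition~\ref{diagonaldiff}(\ref{Hankprod}).
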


\begin{proof} (i) There is a natural unitary isomorphism $H^2(\Db )\rightarrow H^2(RHP)$ given by $f(z)\mapsto (\zeta +1)^{-1}f((\zeta -1 )/(\zeta +1))$ for $\Re \zeta >0$. Then there is a unitary isomorphism $H^2(RHP) \rightarrow L^2((0, \infty ); \Cb )$, 
$f\mapsto (2\pi )^{-1}\int_{-\infty }^\infty e^{i\omega t}f(i\omega ) \,d\omega $ as in the Paley--Wiener Theorem. The composition of conjugation by these unitary maps takes $\Gamma_h \in {\calL}(H^2(\Db ))$ to a Hankel integral operator in ${\calL}(L^2((0, \infty );\Cb ))$, which belongs to ${\calL}^2((L^2(0, \infty ); \Cb))$ whenever $\Gamma_h\in {\calL}^2(H^2(\Db ))$. 

Each $h\in \calD_{\Db}$ determines a harmonic function $h: \Db\rightarrow \Cb$ via the Poisson integral; then we can introduce a harmonic function $h((\zeta -1)/(\zeta +1))$ on $\{ \zeta : \Re \zeta >0\}$; note that
  \begin{equation}\label{DirichletRS}
\iint_{\Rb\times \Rb}{\frac{\vert h({\frac{iw-1}{iw+1}})- h({\frac{iv-1}{iv+1}})\vert^2}{\vert w-v\vert^2}} \,dvdw
    = \iint_{[0, 2\pi ]\times [0,2\pi]} 
    {\frac{\vert h(e^{i\theta })-h(e^{i\phi })\vert^2}{\vert e^{i\theta} -e^{i\phi }\vert^2}}\, d\theta d\phi .
  \end{equation}
  
(ii)
As in Proposition \ref{diagonaldiff} have 
\[(f_1,f_2)\mapsto \trace\bigl( \Gamma_{\bar f_1}^\dagger \Gamma_{f_2}\bigr) = \sum_{\ell=0}^\infty (\ell+1) {\hat f}_1(\ell) {\hat f}_2(-\ell).\]
In particular, each $h\in {\calD}_{\Db}$ produces a Hilbert-Schmidt Hankel operator via this route, so the semi-commutator is trace class.

(iii) By (i), we can introduce
   \begin{equation}
   \varphi_1(f_1,f_2) = \trace ( \varpi (f_1,f_2)-\varpi (f_2,f_1)),
   \end{equation}
so $\varphi$ is well defined and we clearly have $\varphi_1(f_2,f_1)=-\varphi_1 (f_1,f_2)$. We can extend $f,h\in{\calD}_\Db$ to harmonic functions on $\Db$ via the Poisson integral so that $\nabla f,\nabla h$ are square integrable with respect to area measure, so the Jacobian $\partial (f,h)/\partial (x,y)$ is integrable by the Cauchy--Schwarz inequality. 
Note that by \cite[p.~151]{HH} or (\ref{semicommutator}),
  \begin{align}\label{cocycleformula2}
  \trace(\varpi (f,h)-\varpi (h,f))
      &= \trace\bigl( [\rho (f), \rho (h)]\bigr)  \nonumber\\
      &= {\frac{1}{2\pi i}} \iint_\Db {\frac{\partial (f,h)}{\partial (x,y)}} \, dxdy
  \end{align}
and the identity (\ref{Jacobianidentity}). Hence the Hochschild coboundary operator is
  \begin{align}
  (b\varphi_1)(f_1,f_2,f_3)
    &= \trace\bigl( \varpi (f_1f_2,f_3)-\varpi (f_3,f_1,f_2)
          +\varpi (f_1,f_2f_3) \nonumber\\
    &\qquad -\varpi (f_2f_3,f_1)+\varpi (f_3f_1,f_2)
            -\varpi (f_2,f_3f_1)\bigr)   \nonumber\\
    &= \trace\bigl(\rho (f_3)\rho (f_1f_2)-\rho (f_1f_2)\rho (f_3)
            +\rho (f_1)\rho (f_2f_3)  \nonumber\\
    &\qquad -\rho (f_2f_3)\rho (f_1) +\rho (f_2)\rho (f_1f_3)
            -\rho (f_1f_3)\rho (f_2)\bigr) \nonumber\\
    &=0.  
    \end{align}
    This result justifies introducing $\calD$, since the integral in the cocycle formula converges absolutely only for $f_1,f_2\in \calD$. 
\end{proof}



\section{Linear systems and Fredholm determinant formulas}\label{S:LinearsystemsFredholm}

Let $(-A,B,C)$ be a continuous time linear system with state space $H$ and input and output space $\Cb$.  In Theorem~\ref{Lyapunovthm} below we introduce an operator algebra on the state space that captures most of the essential information about the linear system and shows how the ODE (\ref{linsystemsODE}) gives rise to a differential equation in this operator algebra. Under further hypotheses, we use Theorem \ref{Lyapunovthm} in Corollary \ref{crypto} to define an essential spectrum $\Sb_e$ for a linear system. In Lemma \ref{bracketlem}, we introduce a special differential ring which relates the algebras in Theorem \ref{Lyapunovthm} to differential equations on the real line. 

\begin{defn}\label{Lyapunovdef} 
\begin{enumerate}[(i)]
\item  Let $R_x$ be a family of operators such that $\langle R_xf,h\rangle$ is differentiable for all $f\in \calD (A)$ and $h\in \calD (A^\dagger )$. Then 
Lyapunov's equation for $(-A,B,C)$ is 
  \[ {\frac{dR_x}{dx}} = -A R_x - R_x A, \quad x > 0; \qquad 
           \Bigl({\frac{dR_x}{dx}}\Bigr)_{x=0} = -BC.\]
\item Let $\calE$ be a complex subalgebra of $\calL (H)$ with commutator subspace $[{\calE}, {\calE}]$. If $[\calE , \calE] \subset \calL^1(H)$ and $\calE$ is self-adjoint, then $\calE$ is said to be crypto-integral of dimension one; see \cite{HH}. (Note that all higher order commutators $[[U,V],W]$ have zero trace for $U,V,W\in \calE$.) 
\item Suppose that $[(\lambda I+A)^{-1}, (\mu I+A^\dagger )^{-1}]\in \calL^1(H)$. Then we say that $A$ and $A^\dagger$ are resolvent commuting modulo $\calL^1(H)$, or almost resolvent commuting.
\end{enumerate}
\end{defn}

\begin{thm}\label{Lyapunovthm} 
Suppose that $A$ has dense domain $\calD (A)$ and is maximally accretive. Suppose also that $C(sI+A)^{-1}\in H^2(RHP;H')$ and $(sI+A)^{-1}B\in H^2(RHP; H)$. Let $\calE_0$ be the unital complex subalgebra of $\calL (H)$ that is generated by $I, BC,$ and $(A-\lambda  I)(A+\lambda I)^{-1}$ for all $\lambda >0$, and let $\calE$ be the closure of $\calE_0$ in the weak operator topology. 
\begin{enumerate}[(i)]
\item Let $\calA_0=\calE_0/(\calE_0\cap \calL^1(H))$. Then $\calA_0$ is a commutative unital complex algebra, and there is an exact sequence of algebras 
  \begin{equation*}
 0  \xlongrightarrow{}  \calL^1(H)
    \xlongrightarrow{} \calE_0+\calL^1(H)
    \xlongrightarrow{} \calA_0
    \xlongrightarrow{} 0.
\end{equation*} 
\item Let $\varphi_1: {\calE}_0\times {\calE}_0\rightarrow \Cb$ be the bilinear form $\varphi_1 (V,W)= \trace ([V,W])$. Then $V\mapsto \varphi_1 (V,W)$ gives a trace on ${\calE}_0$ for all $W\in {\calE}_0$, and $\varphi_1 $ is a cyclic $1$-cocycle such that
  \begin{equation}\label{linPincus} 
  \varphi_1(V,W) = \log\det \bigl( e^{V}e^We^{-V}e^{-W}\bigr) 
                           \qquad (V,W\in \calE_0).
  \end{equation}
\item Also $\calE$ contains the family of trace class operators 
  \begin{equation}\label{Rdef}
     R_x = \int_x^\infty e^{-tA}BCe^{-tA} \,dt\qquad x\ge0,
  \end{equation}
and $(R_x)_{x\geq 0}$ satisfies Lyapunov's equation. 
\end{enumerate}
\end{thm}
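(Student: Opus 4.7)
The proof separates naturally into the three parts (i), (ii), (iii).

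For (i), the plan is to show $[\calE_0, \calE_0] \subset \calL^1(H)$ by verifying this on a generating set and extending via the Leibniz rule $[V_1 V_2, W] = V_1 [V_2, W] + [V_1, W] V_2$. The generators of $\calE_0$ are $I$, $BC$, and the Cayley transforms $U_\lambda = (A - \lambda I)(A + \lambda I)^{-1}$. Since the input and output space is $\Cb$, the operator $BC$ is rank one, so $[BC, T]$ has rank at most two (hence trace class) for any bounded $T$; the $U_\lambda$ are functions of $A$ and commute pairwise; and the Leibniz identity then propagates trace-class commutators through arbitrary products. It follows that $\calA_0$ is commutative, and the stated exact sequence is the second isomorphism theorem applied to $\calE_0 \cap \calL^1(H) \subset \calE_0$ and $\calL^1(H) \subset \calE_0 + \calL^1(H)$.

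Part (ii) has three components. The trace property $\varphi_1(V_1 V_2, W) = \varphi_1(V_2 V_1, W)$ reduces to $\trace([[V_1, V_2], W]) = 0$, which holds because $[V_1, V_2] \in \calL^1(H)$ by (i) and cyclicity of trace applies. The Hochschild cocycle identity $(b\varphi_1) = 0$ expands to six trace terms cancelling pairwise by cyclicity. The substantive piece is (\ref{linPincus}). I would set $T(t) = e^{tV} e^W e^{-tV} e^{-W}$ and $f(t) = \log \det T(t)$. The crucial preliminary is to verify that $T(t) - I \in \calL^1(H)$: using that $V$ commutes with $e^{\pm tV}$ gives $\dot T(t) = e^{tV}[V, e^W] e^{-tV} e^{-W}$, and Duhamel's identity $[V, e^W] = \int_0^1 e^{sW}[V, W] e^{(1-s)W}\, ds$ places this factor in $\calL^1(H)$. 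Jacobi's formula then gives $f'(t) = \trace(T(t)^{-1}\dot T(t))$, and cyclicity of trace (legitimate because $[V, e^W]$ is trace class) collapses the product to $\trace([V, e^W] e^{-W}) = \trace(V - e^W V e^{-W})$. A second Duhamel expansion $V - e^W V e^{-W} = \int_0^1 e^{sW}[V, W] e^{-sW}\, ds$ and cyclicity identify this with $\trace([V, W])$, independent of $t$. Since $f(0) = 0$, evaluating at $t = 1$ yields (\ref{linPincus}).

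For (iii), the exponential formula $e^{-tA} = \lim_n (I + (t/n) A)^{-n}$ in the strong operator topology places $e^{-tA}$ in $\calE$: each factor $(I + sA)^{-1} = s^{-1}(s^{-1}I + A)^{-1}$ is an affine combination of $I$ and $U_{1/s}$, and so lies in $\calE_0$. Because multiplication is separately WOT-continuous, $\calE$ is itself a unital algebra, and hence $e^{-tA} BC e^{-tA} \in \calE$ for every $t > 0$. The hypotheses $(sI + A)^{-1} B \in H^2(RHP; H)$ and $C(sI + A)^{-1} \in H^2(RHP; H')$ translate under Paley--Wiener to $e^{-tA} B \in L^2((0, \infty); H)$ and $Ce^{-tA} \in L^2((0, \infty); H')$, so the rank-one integrand $e^{-tA} BC e^{-tA}$ has trace norm $\|e^{-tA}B\|_H \cdot \|Ce^{-tA}\|_{H'}$, which is in $L^1(0, \infty)$ by Cauchy--Schwarz. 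Thus $R_x$ is the trace-norm Bochner integral of an $\calE$-valued function; approximating by Riemann sums and invoking WOT-closedness of $\calE$ gives $R_x \in \calE$. Finally, the substitution $t \mapsto t + x$ yields $R_x = e^{-xA} R_0 e^{-xA}$, whose derivative is $-AR_x - R_x A$ in the weak sense of Definition~\ref{Lyapunovdef}(i), while the initial condition $(dR_x/dx)_{x=0} = -BC$ comes directly from the fundamental theorem of calculus applied to the integral defining $R_x$.

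The main obstacle I anticipate is the Pincus formula (\ref{linPincus}); part (i) is routine algebra, the cocycle identities in (ii) are mechanical, and (iii) is semigroup and Hilbert--Schmidt bookkeeping once one recognises the reachability-observability factorisation. The real work is marshalling enough trace-class regularity that the Fredholm determinant in (\ref{linPincus}) is well-defined and can be differentiated via Jacobi's formula, after which the two Duhamel manipulations produce the answer almost mechanically.
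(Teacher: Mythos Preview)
Your proof is correct and, for parts (i), (iii), and the trace/cocycle portions of (ii), tracks the paper closely: the paper also reduces (i) to the observation that commutators of generators land in $\calL^1(H)$, invokes Yosida-type approximants (your exponential formula plays the same role) to place $e^{-tA}$ in $\calE$, and uses Plancherel plus Cauchy--Schwarz for the trace-norm integrability of $R_x$.

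The one substantive divergence is the identity $\varphi_1(V,W)=\log\det(e^Ve^We^{-V}e^{-W})$. The paper simply cites this as Pincus's identity from Helton--Howe, relying on $[V,W]\in\calL^1(H)$. You instead supply a self-contained proof: set $T(t)=e^{tV}e^We^{-tV}e^{-W}$, use Duhamel to show $\dot T(t)\in\calL^1(H)$ and hence that the Fredholm determinant is defined, apply Jacobi's formula, and reduce $\trace(T(t)^{-1}\dot T(t))$ via two applications of cyclicity and Duhamel to the constant $\trace([V,W])$. Your route is longer but removes the external dependence and makes transparent exactly where the trace-class hypothesis is consumed; the paper's citation is more economical but opaque. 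Both are standard and correct.
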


\begin{proof} (i) Since $\calL^1(H)$ is an ideal in $\calL(H)$, one can check that all commutators of elements of $\calE_0$ belong to $\calL^1(H)$, so the commutator subspace of $\calE_0+\calL^1(H)$ is contained in $\calL^1(H)$.

(ii) A trace on ${\calE}_0$ is equivalent to a linear functional on the commutator quotient space 
  ${\calE}_0/[{\calE}_0, {\calE}_0]$. 
Let ${\calM}$ be an ${\calE}_0$-bimodule, and 
  $\delta : {\calE}_0\rightarrow \Der({\calM})$, $V \mapsto \delta_V$ where $\delta_V$ is the inner derivation $\delta_V(M) = VM-MV$. 
Then the inner derivations give a Lie algebra since $[\delta_V, \delta_W] = \delta_{[V,W]}$. By (i), $\varphi_1 (V,W)= \trace ([V,W])$ is well defined. Let ${\Rad}(\varphi_1 ) = \{ V\in {\calE}_0 : \varphi_1 (V,M) =0 \quad \forall M\in {\calE}_0\}$. Then for $V,W\in {\calE}_0$, we have 
$[V,W]\in {\calL}^1(H)$, hence $\trace ( [V,W]M-M[V,W]) = 0$ for all $M\in {\calL}(H)$; that is $[{\calE}_0, {\calE}_0] \subseteq {\Rad}(\varphi_1 )$.
We have $\varphi_1 (V,W)=-\varphi_1 (W,V)$ and 
   \begin{multline*} 
     \varphi_1 (UV,W)-\varphi_1 (U,VW)+\varphi_1 (WU,V)    \\
          = \trace\bigl( UVW-WUV-UVW+VWU+WUV-VWU\bigr)
          = 0.
    \end{multline*}
The formula (\ref{linPincus}) is valid by Pincus's identity \cite[p.~182]{HH0} since $[V,W]\in \calL^1(H)$. 

(iii) By the Lumer--Phillips theorem \cite{Go}, $-A$ generates a strongly continuous contraction semigroup $(e^{-tA})_{t\geq 0}$ on $H$, and $(sI+A)^{-1}\in {\calL}(H)$ for all $\Re s>0$. For $\lambda > 0$ let 
$(-A)_\lambda =-(\lambda /2) -(\lambda /2)(A-\lambda I)(A+\lambda I)^{-1}$. We note that each $(-A)_\lambda$ belongs to $\calE_0$ and $(-A)_\lambda f\rightarrow -Af$ in $H$ as $\lambda\rightarrow\infty$ for all $f\in \calD (A)$. Also $(\exp ( (-A)_\lambda t))_{t\geq 0}$ gives a uniformly continuous contraction semigroup on $H$, since $(-A)_\lambda \in \calL (H)$. Hence $e^{-tA}BCe^{-tA}\in\calE$ is an operator of rank one, and hence trace class, with $\Vert e^{-tA}BCe^{-tA}\Vert_{{\calL}^1(H)}\leq \Vert e^{-tA}B\Vert_H\Vert Ce^{-tA}\Vert_{H'}$. 

As in \cite[Proposition 2.1]{B1}, we have $C(sI+A)^{-1}=\int_0^\infty e^{-st}Ce^{-tA}\, dt$ and $(sI+A)^{-1}B=\int_0^\infty e^{-st}e^{-tA}B\, dt$, where by Plancherel's formula for Hilbert-space functions
  \begin{align*} 
  \int_0^\infty \Vert Ce^{-tA}\Vert_{H'}^2\, dt 
     &= \lim_{\varepsilon\rightarrow 0+} \int_{-\infty}^\infty \Vert 
             C((\varepsilon +i\omega )I+A)^{-1}\Vert^2_{H'} \,{\frac{d\omega}{2\pi}}, 
                              \nonumber\\
   \int_0^\infty \Vert e^{-tA}B\Vert_H^2\,dt 
      &= \lim_{\varepsilon\rightarrow 0+}\int_{-\infty}^\infty \Vert 
            ((\varepsilon +i\omega )I+A)^{-1}B\Vert^2_{H}\,{\frac{d\omega}{2\pi}},
    \end{align*}
and the right-hand sides are finite by hypothesis. Hence by the Cauchy--Schwarz inequality
   \begin{equation}
    \int_0^\infty \Vert e^{-tA}BCe^{-tA}\Vert_{{\calL}^1(H)}\, dt
        \leq \Bigl(\int_0^\infty \Vert e^{-tA}B\Vert^2_H\, dt\Bigr)^{1/2}
        \Bigl( \int_0^\infty \Vert Ce^{-tA}\Vert^2_{H'}\, dt\Bigr)^{1/2};
    \end{equation}
so $R_0$, and likewise $R_x$, define trace class operators in $\calE_0$. The operator $-A^\dagger$ generates the strongly continuous semigroup $(e^{-tA^\dagger})_{t\geq 0}$ of contractions on $H$ by \cite[Theorem 4.3]{Go}. For $f\in \calD (A^\dagger)$ and $h\in {\calD}(A)$, the expression 
$\ip< R_xh,f > = \int_x^\infty \langle BCe^{-tA}h, e^{-tA^\dagger}f \rangle \, dt$ is differentiable, and one easily checks using the fundamental theorem of calculus that Lyapunov's equation holds.
\end{proof}

\begin{rem} (1) We have used the ideal $\calL^1(H)$ since the Fredholm determinant is defined on $\{ I+\Gamma : \Gamma \in \calL^1(H)\}$ as we use in Lemma~\ref{Levitan} below. A similar result to Theorem~\ref{Lyapunovthm}(i) holds with the ideal of compact operators $\calK$ in place of $\calL^1(H)$, so $(\calE_0+\calK)/\calK $ is a commutative subalgebra of the Calkin algebra $\calL(H)/\calK$. This is of interest when $A$ has essential spectrum, and brings us within the scope of Proposition \ref{Tomi}.

(2) Let $(e^{-tA})_{t\geq 0}$ be the shift semigroup $(S_t)_{t\geq 0}$ as in Definition ~\ref{shift}. Then for suitable $B$ and $C$, the hypotheses and hence conclusions of Theorem~\ref{Lyapunovthm}(i),(ii),(iii) hold.

(3) To deal with self-adjoint Schr\"odinger operators, as in section~\ref{S:Spectral}, we use the following variant, and refer to \cite{B1} for examples relating to scattering theory. Corollary \ref{crypto} introduces an essential spectrum for linear systems that satisfy some conditions relating to self-adjointness. Examples \ref{Howlandex}(ii) and \ref{scatteringex} relate to Corollary \ref{crypto} (i) and (ii), while in Corollary \ref{cryptotrace} we obtain a determinant formula for multiplicative commutators relating to Corollary \ref{crypto}(iii).
\end{rem}

\begin{cor}\label{crypto} 
Suppose that both the linear systems $(-A,B,C)$ and $(-A^\dagger,C^\dagger,B^\dagger)$ satisfy the hypotheses of Theorem~\ref{Lyapunovthm}, and that either
\begin{enumerate}[(i)] 
\item $C=B^\dagger$ and $A$ is self-adjoint and non-negative; or
\item $C=B^\dagger$ and $iA$ is self-adjoint; or more generally
\item $A$ and $A^\dagger$ are resolvent commuting modulo $\calL^1(H)$. 
\end{enumerate}
Let $\calE_*$ be the unital complex subalgebra of $\calL (H)$ that is generated by $I, BC, C^\dagger B^\dagger$ and $(A-\lambda  I)(A+\lambda I)^{-1}$ and $(A^\dagger -\mu I)(A^\dagger+\mu I)^{-1}$ for all $\lambda, \mu  >0$. Then 
\begin{itemize} 
 \item $\calE_*$ is crypto-integral of dimension one;
 \item the weak closure of $\calE_*$ is a von Neumann algebra;
 \item the quotient of the weak closure of $\calE_*$ by the compact operators gives a commutative $C^*$ algebra with spectrum $\Sb_e$, so there exists a compact set $\Sb_e$ and a unital $*$-homomorphism $\calE_*\rightarrow C(\Sb_e ;\Cb )$ with dense range. In case (i), $\Sb_e$ is a subset of $[0, \infty )$; whereas in case (ii), $\Sb_e$ is a subset of the unit circle $\Sb^1$. 
\end{itemize}
\end{cor}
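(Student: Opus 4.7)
The plan is to verify the three advertised conclusions in turn, reducing each one to a statement about commutators of generators and then appealing to Theorem~\ref{Lyapunovthm} together with the standard spectral theorem. First I would note that $\calE_*$ is self-adjoint by construction: the adjoints of the generators are $I^\dagger = I$, $(BC)^\dagger = C^\dagger B^\dagger$, and $\bigl((A-\lambda I)(A+\lambda I)^{-1}\bigr)^\dagger = (A^\dagger - \lambda I)(A^\dagger + \lambda I)^{-1}$ for $\lambda > 0$, all of which are themselves generators. This is half of the crypto-integral condition.

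For the commutator condition $[\calE_*,\calE_*]\subseteq \calL^1(H)$, the Leibniz identity $[XY,Z] = X[Y,Z]+[X,Z]Y$ together with the fact that $\calL^1(H)$ is a two-sided ideal in $\calL(H)$ allow an induction on the word length that reduces matters to commutators of pairs of generators. In case (i) one has $A = A^\dagger$ and $C = B^\dagger$, so the adjoint-side generators coincide with those of $\calE_0$ and $\calE_* = \calE_0$, whereupon Theorem~\ref{Lyapunovthm}(i) gives the conclusion. In case (ii), $A^\dagger = -A$ gives $(A^\dagger-\mu I)(A^\dagger+\mu I)^{-1} = (A+\mu I)(A-\mu I)^{-1}$, which is the Cayley transform of the skew-adjoint $A$; it is unitary, is a function of $A$, and therefore commutes with $(A-\lambda I)(A+\lambda I)^{-1}$, while $C^\dagger B^\dagger = BC$ again, so the only nonvanishing commutators involve $BC$ against a function of $A$ and are rank $\leq 2$, hence trace class. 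In case (iii) the commutators between pairs of generators split into three types: (a) those involving $BC$ or $C^\dagger B^\dagger$, which are rank $\leq 2$ because both are rank one; (b) those internal to the $A$-side or $A^\dagger$-side, which are in $\calL^1(H)$ by Theorem~\ref{Lyapunovthm}(i) applied to $(-A,B,C)$ and to $(-A^\dagger,C^\dagger,B^\dagger)$; and (c) the cross commutator $[(A-\lambda I)(A+\lambda I)^{-1},(A^\dagger-\mu I)(A^\dagger+\mu I)^{-1}]$, which lies in $\calL^1(H)$ by the identity $(A-\lambda I)(A+\lambda I)^{-1} = I - 2\lambda(A+\lambda I)^{-1}$ and the resolvent-commuting hypothesis.

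For the von Neumann property, the weak-operator closure $\overline{\calE_*}$ of any unital self-adjoint subalgebra of $\calL(H)$ is a von Neumann algebra by the double commutant theorem. For the Calkin-style quotient, since $[\overline{\calE_*},\overline{\calE_*}]\subseteq \calL^1(H)\subseteq \calK$, the image of $\overline{\calE_*}$ in $\calL(H)/\calK$ is a commutative unital $C^*$-subalgebra, and so is $\overline{\calE_*}/(\overline{\calE_*}\cap \calK)$. By Gelfand duality this algebra is isomorphic to $C(\Sb_e;\Cb)$ for some compact Hausdorff space $\Sb_e$, and composition with the quotient map gives the required unital $*$-homomorphism $\calE_*\to C(\Sb_e;\Cb)$ with dense range.

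To identify $\Sb_e$ in cases (i) and (ii), I would use the fact that $BC$ and $C^\dagger B^\dagger$ are compact and hence have trivial image in the Calkin algebra, so that $\Sb_e$ is controlled by the Calkin image of the $C^*$-algebra generated by $\{(A-\lambda I)(A+\lambda I)^{-1}\}_{\lambda >0}$. In case (i), $A\geq 0$ self-adjoint, so this family generates (via the continuous functional calculus applied to the bijection $\lambda\mapsto (t-\lambda)/(t+\lambda)$) a $C^*$-algebra whose Gelfand spectrum modulo $\calK$ is homeomorphic to the essential spectrum of $A$, which lies in $[0,\infty)$. In case (ii), $iA$ is self-adjoint and the Cayley transforms $(A-\lambda I)(A+\lambda I)^{-1}$ are unitary with spectrum in $\Sb^1$, so $\Sb_e\subseteq \Sb^1$. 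The step I expect to be the main obstacle is the precise identification in case (iii), where no explicit description of $\Sb_e$ is demanded; there the delicate point is rather verifying, in the induction step, that Leibniz-expanded commutators of long words in the mixed $A$ and $A^\dagger$ generators remain trace class despite the possible unboundedness of $A$, which is handled by working throughout with the bounded resolvent generators rather than with $A$ itself.
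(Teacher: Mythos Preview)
Your approach is essentially the same as the paper's: reduce the commutator condition to pairs of generators (the paper does this implicitly via Theorem~\ref{Lyapunovthm}), observe that in cases (i) and (ii) the generating set collapses to that of $\calE_0$ so Theorem~\ref{Lyapunovthm} applies directly, and in case (iii) use the identity $(A-\lambda I)(A+\lambda I)^{-1}=I-2\lambda(A+\lambda I)^{-1}$, which yields exactly the paper's key formula $[(A-\lambda I)(A+\lambda I)^{-1},(A^\dagger-\mu I)(A^\dagger+\mu I)^{-1}]=4\lambda\mu[(A+\lambda I)^{-1},(A^\dagger+\mu I)^{-1}]$; the Gelfand--Naimark argument for $\Sb_e$ is likewise the same. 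One small overreach: your assertion $[\overline{\calE_*},\overline{\calE_*}]\subseteq\calL^1(H)$ for the weak closure is not justified (neither $\calL^1(H)$ nor $\calK$ is weakly closed), but the paper's proof sidesteps this by working with $\pi(\calE_*)$ and taking its $C^*$-closure in the Calkin algebra, which is what you should do as well.
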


\begin{proof} (i) This is a special case of Theorem~\ref{Lyapunovthm}, in which $\phi (t)=Ce^{-tA}B$ is real. Hence $\calE_0=\calE_*$ is a self-adjoint subalgebra of $\calL (H)$ such that the commutator subspace is contained in $\calL^1(H).$ The canonical quotient map $\pi:{\calL}(H)\rightarrow {\calL}(H)/{\calK}(H)$ to the Calkin algebra restricts to a map on $\calE_*$ which is zero on $[\calE_*, \calE_*]$. Hence there is commutative $C*$-algebra $\calC$ such that $\pi (\calE_*)$ is dense in $\calC$. By the Gelfand-Naimark theorem, $\calC$ is $*$-isomorphic to $C(\Sb_e ;\Cb )$ for some compact set $\Sb_e$. Note that $BC$ is compact, hence $\pi (BC)=0$; also the spectrum of $(\lambda I-A)(\lambda I+A)^{-1}$ is contained in $[0, \infty )$ for all $\lambda >0$; hence the essential spectrum of $(\lambda I-A)(\lambda I+A)^{-1}$ is contained in $[0, \infty )$ for all $\lambda >0$; hence $\Sb_e\subset [0, \infty ).$

(ii) This is a special case of Theorem~\ref{Lyapunovthm}, in which $(e^{itA})_{t\in\Rb}$ is a unitary group and $\phi (t)=Ce^{-tA}C^\dagger$ has $\bar \phi (t)=\phi (-t).$ Here $\calE_*$ is a self-adjoint subalgebra of $\calL (H)$ such that the commutator subspace is contained in $\calL^1(H)$. Here the spectrum of $(\lambda I-A)(\lambda I+A)^{-1}$ is contained in $\Sb^1$, hence the essential spectrum of $(\lambda I-A)(\lambda I+A)^{-1}$ is contained in $\Sb^1$, so $\Sb_e\subseteq \Sb^1$.

(iii) The proof is as for Theorem~\ref{Lyapunovthm}. The key  identity is
\begin{equation}\bigl[ (A-\lambda I)(A+\lambda I)^{-1}, (A^\dagger -\mu I)(A+\mu I)\bigr]=4\lambda\mu \bigl[(A+\lambda I)^{-1}, (A^\dagger +\mu I)^{-1}\bigr].\end{equation}
\end{proof}

In section \ref{S:HowlandFredholm}, we obtain various results in the context of Corollary \ref{crypto}(i); Example \ref{scatteringex} relates to Corollary \ref{crypto}(ii). Proposition \ref{cryptotrace} applies in the context of Corollary \ref{crypto}(iii).


\section{Howland operators and Fredholm determinant formulas}\label{S:HowlandFredholm}
A Howland operator in an integral operator of the form
  \[ Kf(x) = \int_0^\infty \frac{b(x) c(y)}{x+y} f(y) \, dy,
     \qquad f \in L^2((0, \infty );\Cb )\]
where $b,c\in L^\infty ((0, \infty ); \Cb )$.  Howland observed \cite[page 410]{How} that some tools from the theory of Schr\"odinger operators could be adapted to study such integral operators and that Lyapunov's equation is fundamental. In our context, Howland operators provide an important situation where the conditions of Corollary~\ref{crypto}(i) hold, and such operators arise from linear systems of a special form which we now describe. In Proposition \ref{Levitan}, we compute Fredholm determinants via the Gelfand-Levitan equation.

\begin{ex}\label{Howlandex}
(i) 
Let $b,c\in L^\infty ((0, \infty );\Cb )\cap L^2((0, \infty );\Cb )$. Consider the linear system $(-A,B,C)$ with state space $H=L^2((0, \infty ); \Cb)$ where $\calD (A)=\{ f\in L^2((0, \infty ); \Cb ): xf(x)\in L^2((0, \infty );\Cb )\}.$ The operators are
  \begin{align}\label{howlandABC} 
    A: \calD (A)\rightarrow L^2((0, \infty);\Cb ),
       &\qquad f(u)\mapsto uf(u)\qquad (u > 0);\nonumber\\
    B:\Cb \rightarrow L^2((0, \infty );\Cb ),
       & \qquad x\mapsto bx\qquad (x\in \Cb);\nonumber\\
    C: L^2((0, \infty );\Cb )\rightarrow \Cb,
       & \qquad f\mapsto \int_0^\infty c(u)f(u)\, du.
  \end{align}
The impulse response function is given by the Laplace transform of $bc$, 
   \[\phi(t) = C e^{-tA} B = \int_0^\infty b(u)c(u)e^{-tu} \,du,\]
while $R_x = \int_x^\infty e^{-tA}BCe^{-tA} \,dt$ is the integral operator on $L^2((0, \infty );\Cb )$ that has kernel
\[ {\frac{ b(u)c(v)e^{-(u+v)x}}{u+v}}\qquad (u,v>0).\]
Thus $R_x$ is a Howland operator for all $x > 0$. 
If $c(u),b(u), c(u)/\sqrt{u}, b(u)/\sqrt{u}$ all belong to $L^2((0, \infty );\Cb)$, then the hypotheses of Theorem \ref{Lyapunovthm} are satisfied, and $R_0\in {\calL}^1(H)$. This was stated in \cite[Proposition 2.1]{B2}; here we give the crucial estimate
  \[ \Vert C((\varepsilon +i\omega )I+A)^{-1}\Vert^2_{H'}=\int_0^\infty {\frac{\vert c(u)\vert^2}{\vert \varepsilon +i\omega +u\vert^2}}du,\]
which gives
  \begin{align}
  \int_{-\infty}^{\infty} \Vert C((\varepsilon +i\omega )I+A)^{-1}\Vert^2_{H'} {\frac{d\omega}{2\pi}}
     &= \int_0^\infty \vert c(u)\vert^2
         \int_{-\infty}^{\infty} {\frac{1}{\vert \varepsilon +u+i\omega \vert^2}}{\frac{d\omega}{2\pi}} du\nonumber\\
     &={\frac{1}{2}} \int_0^\infty {\frac{\vert c(u)\vert^2 du}{u+\varepsilon}}.
  \end{align}
Thus $C(sI+A)^{-1}$ belongs to $H^2(RHP; H')$. A similar calculation shows that $(sI+A)^{-1}B \in H^2(RHP; H)$.

(ii) Suppose that $b=\bar c$; then $C=B^\dagger$, and Corollary \ref{crypto}(i) applies. 
(iii) The following Proposition \ref{Volterraprop} introduces a Volterra type group and applies to the corresponding $T_{GL}$ from (\ref{howlandABC}).

\end{ex}

For $a>0$, we have an orthogonal decomposition $L^2((0, \infty ); \Cb )=L^2((0,a);\Cb )\oplus L^2((a, \infty ); \Cb )$ where $L^2((a,\infty ); \Cb)$ is the image of the shift $S_a$ and  $L^2((0,a); \Cb )$ is the nullspace of $S_a^\dagger$.
Consider the space $\calT$ of all measurable $T:(0, \infty) \times (0,\infty )\rightarrow \Cb$ such that $T(x,y)=0$ for all $0<y<x$ and
$\int_0^\infty \int_x^\infty \vert T(x,y)\vert^2 \, dydx$ is finite, so $T$ is upper triangular. 
Given $T \in \calT$ define the 
bounded linear operator 
$V_T:L^2((0, \infty );\Cb )\rightarrow L^2((0, \infty );\Cb)$ by
\begin{equation}\label{Volterra}
   V_T f(x) = f(x)+\int_x^\infty T(x,y) f(y)\, dy 
      \qquad (f\in L^2((0, \infty ); \Cb)).
\end{equation}
\begin{prop}\label{Volterraprop} Let 
  \[ \calG = \{V_T \st \text{$T \in \calT$ 
    and $V_T$ is invertible in $\calL (L^2(0, \infty );\Cb )$} \}.
          \]
Then
\begin{enumerate}[(i)]
\item  for all $V\in\calG$ and $a>0$, the operator $V$ restricted to $L^2((0, a); \Cb )$ has image in $L^2((0,a); \Cb )$;
\item $\calG$ forms a group under operator composition;\par
\item If $V\in \calG$ is unitary, then $V=I$.
\item Suppose that $T \in \calT$, that  $T(x,y)$ is continuous on $\{ (x, y) \in (0, \infty )^2: 0<x<y\}$, and there exists $\varepsilon >0$ such that $e^{\varepsilon (x+y)}T(x,y)$ is bounded. Then with such a $T$ the formula (\ref{Volterra}) gives an operator $V_T\in\calG$.
\end{enumerate}
\end{prop}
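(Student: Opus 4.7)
The plan is to exploit the upper-triangular structure of the kernels throughout, combining direct kernel computations for (i)--(iii) with a Fredholm-alternative argument for (iv). Throughout, write $K_T=V_T-I$ for the integral operator with kernel $T(x,y)$, which belongs to $\calL^2(L^2((0,\infty);\Cb))$ by the definition of $\calT$.

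For (i), if $f\in L^2((0,a);\Cb)$ then $f(y)=0$ for $y>a$ and $T(x,y)=0$ for $y<x$, so $V_T f(x)=f(x)+\int_x^a T(x,y)f(y)\,dy$, which vanishes for $x\ge a$. For the group structure in (ii), a Fubini calculation gives $V_T V_S=V_U$ with
\[ U(x,z)=T(x,z)+S(x,z)+\int_x^z T(x,y)S(y,z)\,dy \qquad (0<x<z); \]
each term is upper-triangular and Hilbert--Schmidt, so $U\in\calT$, and $V_T V_S$ is invertible as a product of invertibles. For inverses, $V_T V_T^{-1}=I$ rearranges to $V_T^{-1}-I=-K_T V_T^{-1}$, which is Hilbert--Schmidt. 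To obtain the triangular structure of the inverse, use (i) to regard $V_T|_{L^2((0,a);\Cb)}$ as $I+K_{T,a}$, where $K_{T,a}$ is a classical Volterra operator with $L^2$ kernel on a bounded interval; the standard factorial decay of its iterated kernels makes $K_{T,a}$ quasi-nilpotent, so $I+K_{T,a}$ is invertible on $L^2((0,a);\Cb)$. Its inverse must coincide with the restriction of $V_T^{-1}$, so $V_T^{-1}$ preserves each $L^2((0,a);\Cb)$ and hence has upper-triangular kernel; therefore $V_T^{-1}\in\calG$.

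For (iii), if $V_T$ is unitary then $V_T^{\dagger}=V_T^{-1}$. A direct calculation shows that the kernel of $V_T^{\dagger}-I$ is $\overline{T(y,x)}$ supported on the lower triangle $\{y<x\}$, while by (ii) the kernel of $V_T^{-1}-I$ is supported on the upper triangle $\{y>x\}$; equality as integral kernels forces both to vanish, giving $V_T=I$. For (iv), the bound $|T(x,y)|\le Me^{-\varepsilon(x+y)}$ yields $\iint_{y>x}|T(x,y)|^2\,dx\,dy\le M^2/(8\varepsilon^2)$, so $T\in\calT$ and $K_T$ is Hilbert--Schmidt, hence compact. Consequently $I+K_T$ is Fredholm of index zero, and it suffices to prove injectivity. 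If $(I+K_T)f=0$, Cauchy--Schwarz gives the initial bound $|f(x)|\le (M/\sqrt{2\varepsilon})e^{-2\varepsilon x}\|f\|_2$; bootstrapping via $f=-K_T f$ yields inequalities $|f(x)|\le a_n e^{-b_n x}$ with $b_n\to\infty$ and $a_n$ decaying factorially, forcing $f\equiv 0$.

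The main obstacle is establishing the triangular structure of $V_T^{-1}-I$ in (ii): invertibility on all of $L^2((0,\infty);\Cb)$ does not automatically pass to the finite-interval restrictions, and one must invoke the classical quasi-nilpotency of $L^2$-kernel Volterra operators on bounded intervals to force each $V_T|_{L^2((0,a);\Cb)}$ to be invertible.
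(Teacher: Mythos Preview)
Your proof is correct. Parts (i) and (iii) match the paper's arguments essentially verbatim.

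For (ii), the paper is much terser: it writes $H=V^{-1}-I=-V^{-1}(V-I)$ and asserts this is upper triangular, without explaining why composition with $V^{-1}$ preserves the triangular structure. You correctly identify this as the real obstacle and fill the gap by invoking quasi-nilpotency of the Volterra restrictions $K_{T,a}$ on bounded intervals, so that each $V_T|_{L^2((0,a);\Cb)}$ is already invertible there and its inverse must agree with the restriction of $V_T^{-1}$. Your treatment of (ii) is more complete than the paper's.

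For (iv), the approaches genuinely differ. The paper constructs the inverse directly as a Neumann series: the $(n+1)$-fold iterated kernel is an integral over the simplex $\{x<y_1<\cdots<y_n<y\}$, and the exponential bound yields
\[
\bigl|T^{(n+1)}(x,y)\bigr|\le \frac{M^{n+1}e^{-\varepsilon(x+y)}}{(2\varepsilon)^n\, n!},
\]
so $\sum_{j\ge1}(-1)^jT^j$ converges in $\calL^2$, the spectrum of $V_T$ is $\{1\}$, and the upper-triangular Hilbert--Schmidt form of $V_T^{-1}-I$ drops out for free. Your Fredholm-alternative-plus-bootstrap route reaches the same endpoint; the bootstrap recursion $a_{n+1}=Ma_n/(\varepsilon+b_n)$, $b_{n+1}=b_n+2\varepsilon$ is in effect the Neumann iteration applied to a single $f$. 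The paper's version is slightly more economical and delivers the explicit structure of the inverse without appealing to (ii), while yours has the virtue of separating the compactness/index-zero mechanism from the decay estimate.
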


\begin{proof} (i) For all $0<a<x$, we have $\int_0^aT(x,y) \,dy=0$, which is equivalent to the condition that $\int_0^a T(x,y)f(y) \, dy=0$ for all $0<a<x$ and $f\in L^2((0, a); \Cb )$. Recall the shift operator $S_a$ and note that $S_aS_a^\dagger $ is the orthogonal projection $L^2((0, \infty );\Cb )\rightarrow L^2((a,\infty );\Cb )$ and by the preceding calculation $S_aS_a^\dagger V(I-S_aS_a^\dagger )=0$. \par
 (ii) One considers the equation $V(I+H)=I$, which is equivalent to $H=-V^{-1}(V-I)$, which is upper triangular.\par
(iii) By (ii), the condition $VV^\dagger =I$ requires the adjoint kernel $\overline{T(y,x)}$ also to be upper triangular, whereas we have $\overline{T(y,x)}=0$ for all $0<x<y$ which implies the adjoint kernel is lower triangular; so 
$T(x,y)=0$ for all $x,y>0$. \par
(iv) Suppose that $\vert T(x,t)\vert\leq Me^{-\varepsilon (x+y)}$; then by iterated integration over a simplex, one shows that
\begin{align}\int_{\{x<y_1<y_2<\dots <y_n<y\}} &\vert T(x,y_1)T(y_1,y_2)\dots T(y_n,y)\vert dy_1\dots dy_n\nonumber\\
&\leq M^{n+1}e^{-\varepsilon (x+y)}\int_{\{x<y_1<y_2<\dots <y_n<y\}} e^{-2\varepsilon (y_1+\dots +y_n)} dy_1\dots dy_n\nonumber\\
&\leq {\frac{M^{n+1} e^{-\varepsilon (x+y)}}{\varepsilon^n 2^n n!}}.\end{align}
Thus one shows that the series $\sum_{j=1}^\infty (-1)^jT^j$ converges in $\calL^2$, so the spectrum of $V_T$ is $\{ 1\}$ and $V_T$ is invertible with $V_T^{-1}-I\in\calT$.
\end{proof}
\begin{prop}\label{Levitan}
Let $(-A,B,C)$ be a continuous time linear system as in Theorem \ref{Lyapunovthm}, with impulse response function $\phi$, and let
   \begin{equation}\label{Tfunction} T_{GL}(x,y)=-Ce^{-xA}(I+R_x)^{-1} e^{-yA}B,\qquad x,y \ge 0. \end{equation}
Then $T_{GL}$ satisfies the Gelfand--Levitan equation
   \begin{equation}\label{gelfandlevitanequation} \phi (x+y)+T_{GL}(x,y)+\int_x^\infty T_{GL}(x,z)\phi (z+y)\, dz=0,\end{equation}
and $(d/dx)\log\det (I+R_x)= \trace  T_{GL}(x,x)$.
\end{prop}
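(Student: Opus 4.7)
\emph{Proof plan.} The argument splits cleanly into two parts, both essentially computational once the algebraic structure revealed by Theorem~\ref{Lyapunovthm} is invoked.

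For the Gelfand--Levitan identity, the plan is to substitute the definition of $T_{GL}$ directly into the convolution integral and extract the factors that do not depend on the integration variable. Since $\phi(z+y)=Ce^{-zA}e^{-yA}B$, one computes
\[
\int_x^\infty T_{GL}(x,z)\,\phi(z+y)\,dz
 = -Ce^{-xA}(I+R_x)^{-1}\Bigl(\int_x^\infty e^{-zA}BCe^{-zA}\,dz\Bigr) e^{-yA}B,
\]
and the middle integral is exactly $R_x$ by its definition (\ref{Rdef}). Adding $T_{GL}(x,y)$ produces the factor $(I+R_x)^{-1}(I+R_x)=I$ inside, which collapses to $-Ce^{-(x+y)A}B=-\phi(x+y)$, yielding (\ref{gelfandlevitanequation}). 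The only point requiring care is that one is allowed to interchange the scalar operators $C$ and $B$ with the operator-valued integral; this is legitimate because $C$ and $B$ are bounded and the integrand lies in $\calL^1(H)$ by the Cauchy--Schwarz estimate established in the proof of Theorem~\ref{Lyapunovthm}(iii).

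For the logarithmic derivative of the Fredholm determinant, I would first differentiate $R_x$ directly from its integral definition to obtain
\[
\frac{dR_x}{dx}=-e^{-xA}BCe^{-xA},
\]
which is a rank-one (hence trace class) operator. One may check consistency with Lyapunov's equation by noting $AR_x+R_xA=-\int_x^\infty \frac{d}{dt}(e^{-tA}BCe^{-tA})\,dt=e^{-xA}BCe^{-xA}$, using that $e^{-tA}BCe^{-tA}\to 0$ in trace norm as $t\to\infty$ (by the Cauchy--Schwarz bound). Then Jacobi's formula for Fredholm determinants of trace-class perturbations of the identity gives
\[
\frac{d}{dx}\log\det(I+R_x)
 = \trace\bigl((I+R_x)^{-1}\tfrac{dR_x}{dx}\bigr)
 = -\trace\bigl((I+R_x)^{-1}e^{-xA}BCe^{-xA}\bigr).
\]
Using the cyclicity of the trace (valid since $BC$ has rank one, so each factor involved is at least Hilbert--Schmidt), this equals $-Ce^{-xA}(I+R_x)^{-1}e^{-xA}B = T_{GL}(x,x)$. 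Because $H_0=\Cb$ the quantity $T_{GL}(x,x)$ is scalar and agrees with its own trace, giving the stated identity.

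The only genuine obstacle is the invertibility of $I+R_x$ needed to make $T_{GL}$ and $(I+R_x)^{-1}(dR_x/dx)$ meaningful; I would handle this by restricting to the open set of $x$ where $\det(I+R_x)\ne 0$, on which both sides of the determinant formula are real-analytic, and then noting that in the settings of immediate interest (for example Corollary~\ref{crypto}(i) or the Howland case of Example~\ref{Howlandex}), $R_x$ is positive self-adjoint so $I+R_x\geq I$ is boundedly invertible for every $x\geq 0$. All interchanges of integration, differentiation and trace are justified by the trace-class bounds $\int_0^\infty\|e^{-tA}BCe^{-tA}\|_{\calL^1(H)}\,dt<\infty$ already obtained in the proof of Theorem~\ref{Lyapunovthm}(iii).
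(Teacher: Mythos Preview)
Your proposal is correct and follows essentially the same route as the paper: substitute the definitions so that the convolution integral produces $(I+R_x)^{-1}R_x$ and combine with $(I+R_x)^{-1}$ to collapse to the identity, and for the determinant use Jacobi's formula together with $dR_x/dx=-e^{-xA}BCe^{-xA}$ and the rank-one cyclicity trick. The paper's proof is in fact terser than yours---it cites the key operator identity from \cite{BN} and omits the invertibility discussion and the analytic justifications you supply.
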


\begin{proof} We recall from \cite[Theorem~2.5]{BN} the basic identity
\begin{multline*} 
  Ce^{-(x+y)A}B -Ce^{-xA}(I+R_x)^{-1}e^{-yA}B
   -Ce^{-xA}(I+R_x)^{-1}
      \int_x^\infty  e^{-zA}BCe^{-zA}e^{-yA}B \, dz
   \\
  =Ce^{-xA}\Bigl( I-(I+R_x)^{-1}-(I+R_x)^{-1}R_x\Bigr)e^{-yA}B
  =0.
\end{multline*}
Also, the determinant satisfies
\begin{align}{\frac{d}{dx}}\log\det (I+R_x)&={\frac{d}{dx}} \trace \log (I+R_x)\nonumber\\
&=\trace \Bigl( (I+R_x)^{-1}{\frac{dR_x}{dx}}\Bigr)\nonumber\\
&=-\trace\bigl( (I+R_x)^{-1}e^{-xA}BCe^{-xA}\bigr)\nonumber\\
&=-Ce^{-xA}(I+R_x)^{-1}e^{-xA}B,\end{align}
as required.
\end{proof}

We can apply Proposition \ref{Levitan} directly in the context of Corollary~\ref{crypto}(i). To deal with Hankel products, we follow the approach of \cite{BD} and obtain a formula for the Fredholm determinant and its derivative from Proposition \ref{diagonaldiff}(vi). Note that if $\phi $ is the impulse response function of $(-A,B,C)$, then $\phi^\dagger$ is the impulse response function of $(-A^\dagger ,C^\dagger B^\dagger )$. 
Let $(-A,B_1,C_1)$ and $(-A,C_2,B_2)$ be linear systems with state space $H$ an impulse response functions $\phi (t)^\top =C_1e^{-tA}B_1$ and $\psi (t)=C_2e^{-tA}B_2$. To combine these, we let $\lambda\in \Cb$ be a complex parameter and let $(-\hat A, \hat B, \hat C)$ be the linear system
  \begin{equation}\label{matrixlinsystem} 
    (-\hat A, \hat B, \hat C)
     = \Biggl(\begin{bmatrix} -A&0\\ 0&-A\end{bmatrix},
            \begin{bmatrix} 0&B_1\\ B_2&0\end{bmatrix}, 
            \begin{bmatrix} \lambda C_1&0\\ 0&-C_2\end{bmatrix} \Biggr) \end{equation}
which has impulse response function
  \[ \Phi (x)
    = {\hat C}e^{-x{\hat A}}{\hat B}
    = \begin{bmatrix} 0&\lambda C_1e^{-xA}B_1\\ -C_2e^{-xA}B_2&0\end{bmatrix}
    = \begin{bmatrix} 0&\lambda\phi(t)^\top\\ -\psi (x)&0\end{bmatrix};\]
we momentarily defer the proof of existence. We introduce the operator function
  \begin{align} 
  \hat R_x
    =\int_x^\infty e^{-s{\hat A}}{\hat B}{\hat C}e^{-s{\hat A}} \,ds
    =\int_x^\infty 
         \begin{bmatrix} 
          0               &-e^{-sA}B_1C_2e^{-sA}\\ 
          \lambda e^{-sA}B_2C_1e^{-sA} & 0
          \end{bmatrix}\,ds .
\end{align}
\begin{rem}
It is straightforward to produce a suitable linear system that has impulse response function equal to any given rational function. For $a\in LHP$ and $r\in \Nb$, we take the state space to be $L^2((0, \infty );\Cb) $ and input and output spaces $\Cb$. Let
  \begin{align}\label{basicrational} 
  e^{-tA}: L^2((0, \infty );\Cb )\rightarrow L^2((0, \infty); \Cb ):
                &\qquad f(u)\mapsto e^{-tu}f(u)\qquad (u,t>0);
                                                \nonumber\\
  B:\Cb \rightarrow L^2((0, \infty );\Cb ):
                & \qquad x\mapsto u^{(r-1)/2}e^{au/2} x\qquad (x\in \Cb);
                                   \nonumber\\
  C: L^2((0, \infty );\Cb )\rightarrow \Cb:
        & \qquad f \mapsto {\frac{1}{(r-1)!}}\int_0^\infty u^{(r-1)/2} e^{au/2}f(u)\, du.
  \end{align}
Then the impulse response function is 
  \[\phi (t) = Ce^{-tA}B
     ={\frac{1}{(r-1)!}}\int_0^\infty u^{r-1}e^{-tu+au} \,du
     ={\frac{1}{(t-a)^r}},\]
while $R_x=\int_x^\infty e^{-tA} B C e^{-tA} \,dt$ is the integral operator on $L^2((0, \infty );\Cb )$ that has kernel
\[ R_x\leftrightarrow {\frac{ (uv)^{(r-1)/2)}e^{(a/2-x)(u+v)}}{u+v}}\qquad (u,v>0),\]
which has the form of a Howland operator as in Example \ref{Howlandex} and \cite{How}; here $1/(u+v)$ is the Carleman kernel as in the proof of Lemma \ref{boundedHankel}, while the other factors arise from multiplication operators.

By an elaboration of this example, one can introduce (\ref{matrixlinsystem}) to realize the  impulse response functions as in 
(\cite[Proposition 3.2]{BD}). Given linear systems $(-A,B_1,C_1)$ with impulse response function $\phi_1$ and state space $H$, and $(-A,B_2,C_2)$ with impulse response function $\phi_2$, then 
  \begin{equation} \Bigl( \begin{bmatrix} -A&0\\ 0&-A\end{bmatrix}, \begin{bmatrix} B_1\\B_2\end{bmatrix}, \begin{bmatrix}C_1&C_2\end{bmatrix}\Biggr)
  \end{equation}
has impulse response function $\phi_1+\phi_2$ and state space $H\otimes \Cb^2$. Thus one can realise rational function via partial fractions by adding the impulse response functions from sytsems as in (\ref{basicrational}).
\end{rem}

\begin{prop}\label{GelfandLevitanProp} 
Let $K = \Gamma_\phi^\top \Gamma_\psi \in \calL^2$ be a product of vector-valued Hankel operators and let $K_t = S_t^\dagger K S_t$ be as in Proposition \ref{diagonaldiff}(vi).
Suppose that $A$ has dense domain $\calD (A)$ and is maximally accretive. Suppose also that $C_j(sI+A)^{-1}\in H^2(RHP;H')$ and $(sI+A)^{-1}B_j\in H^2(RHP; H)$ for $j=1,2$ and that $(-\hat A, \hat B, \hat C)$ is defined as in (\ref{matrixlinsystem}). Then 
  \begin{equation} 
  \hat T_{GL}(x,y) = -{\hat C}e^{-x{\hat A}} 
                     (I+{\hat R}_x)^{-1} e^{-y{\hat A}}{\hat B}.
  \end{equation}
satisfies the Gelfand--Levitan equation
  \begin{equation}\label{GelfandLevitan}
  {\hat \Phi}(x+y)+ {\hat T_{GL}}(x,y)
      + \int_x^\infty {\hat T_{GL}}(x,z) {\hat \Phi}(z+y)\, dz = 0, 
  \end{equation}
and satisfies
\begin{equation}
   \trace {\hat T_{GL}}(x,x) 
       = {\frac{d}{dx}} \log \det (I+\lambda K_x).
   \end{equation}
\end{prop}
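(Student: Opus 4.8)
The plan is to recognise the statement as Proposition~\ref{Levitan} applied to the combined linear system $(-\hat A,\hat B,\hat C)$ of~(\ref{matrixlinsystem}), together with an identification of $\det(I+\hat R_x)$ with $\det(I+\lambda K_x)$. First I would check that $(-\hat A,\hat B,\hat C)$ meets the hypotheses of Theorem~\ref{Lyapunovthm}, hence of Proposition~\ref{Levitan}. Since $\hat A=\diag(A,A)$ with $A$ densely defined and maximally accretive, $\hat A$ inherits these properties, $\calD(\hat A)=\calD(A)\oplus\calD(A)$ is dense, and $e^{-x\hat A}=\diag(e^{-xA},e^{-xA})$. The entries of $\hat C(sI+\hat A)^{-1}$ are scalar multiples of $C_1(sI+A)^{-1}$ and $C_2(sI+A)^{-1}$, and those of $(sI+\hat A)^{-1}\hat B$ are $(sI+A)^{-1}B_1$ and $(sI+A)^{-1}B_2$, so the hypotheses $C_j(sI+A)^{-1}\in H^2(RHP;H')$ and $(sI+A)^{-1}B_j\in H^2(RHP;H)$ for $j=1,2$ give $\hat C(sI+\hat A)^{-1}\in H^2(RHP;\hat H')$ and $(sI+\hat A)^{-1}\hat B\in H^2(RHP;\hat H)$. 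In particular $\hat\Phi(x)=\hat C e^{-x\hat A}\hat B$ exists, which discharges the existence claim deferred just before the statement, and by Theorem~\ref{Lyapunovthm} the operators $\hat R_x=\int_x^\infty e^{-s\hat A}\hat B\hat C e^{-s\hat A}\,ds$ form a trace class family satisfying Lyapunov's equation.

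With the hypotheses in place, Proposition~\ref{Levitan} applied verbatim to $(-\hat A,\hat B,\hat C)$ yields at once that $\hat T_{GL}(x,y)=-\hat C e^{-x\hat A}(I+\hat R_x)^{-1}e^{-y\hat A}\hat B$ solves the Gelfand--Levitan equation~(\ref{GelfandLevitan}) and that $\frac{d}{dx}\log\det(I+\hat R_x)=\trace\hat T_{GL}(x,x)$. It then remains to identify $\det(I+\hat R_x)$ with $\det(I+\lambda K_x)$. Writing $\hat R_x$ in the block form displayed before the statement, $I+\hat R_x=\bigl[\begin{smallmatrix}I&-P_x\\ \lambda Q_x&I\end{smallmatrix}\bigr]$ with $P_x=\int_x^\infty e^{-sA}B_1C_2e^{-sA}\,ds$ and $Q_x=\int_x^\infty e^{-sA}B_2C_1e^{-sA}\,ds$ trace class on $H$, block triangularisation gives $\det(I+\hat R_x)=\det(I+\lambda Q_xP_x)$. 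On the Hankel side I would factor $\Gamma_\phi^\top=\mathcal{O}_1\mathcal{C}_1$ and $\Gamma_\psi=\mathcal{O}_2\mathcal{C}_2$ through $H$, where $\mathcal{O}_jh=(a\mapsto C_je^{-aA}h)$ is the observation map and $\mathcal{C}_jf=\int_0^\infty e^{-sA}B_jf(s)\,ds$ the reachability map, so that $K=\mathcal{O}_1(\mathcal{C}_1\mathcal{O}_2)\mathcal{C}_2$ with $\mathcal{C}_1\mathcal{O}_2=\int_0^\infty e^{-sA}B_1C_2e^{-sA}\,ds$ an operator on $H$; the conjugation $K_x=S_x^\dagger KS_x$ of Proposition~\ref{diagonaldiff}(vi) replaces these compositions by their shifted analogues, and the cyclic invariance of the Fredholm determinant then collapses the $L^2$-side onto the $H$-side, matching $\det(I+\lambda K_x)$ with $\det(I+\lambda Q_xP_x)$. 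Combining the two identifications yields $\trace\hat T_{GL}(x,x)=\frac{d}{dx}\log\det(I+\lambda K_x)$.

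I expect the last step to be the main obstacle. The maps $\sigma_t(\cdot)=S_t^\dagger(\cdot)S_t$ are not multiplicative, since $S_tS_t^\dagger\neq I$, so one cannot simply write $\sigma_t(\Gamma_\phi^\top\Gamma_\psi)=\sigma_t(\Gamma_\phi^\top)\sigma_t(\Gamma_\psi)$; instead one computes the kernel of $K_x$ directly, recognises it as $C_1e^{-(a+x)A}\bigl(\int_0^\infty e^{-sA}B_1C_2e^{-sA}\,ds\bigr)e^{-(b+x)A}B_2$, and then has to reconcile the shift parameter appearing in $K_x$ with the lower limit of integration defining $\hat R_x$ so that the cyclic-determinant reduction really does produce $Q_xP_x$ rather than a differently shifted product. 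Once this bookkeeping is settled the remainder is routine, the Gelfand--Levitan equation being nothing more than Proposition~\ref{Levitan} for the combined system.
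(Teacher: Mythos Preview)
Your proposal is correct and follows essentially the same path as the paper. The paper likewise reduces the Gelfand--Levitan equation to the already-proved single-system case (citing \cite[Lemma~5.1]{B1}, which is the content of Proposition~\ref{Levitan}), identifies $\det(I+\hat R_x)$ with $\det(I+\lambda K_x)$, and then differentiates $\log\det(I+\hat R_x)$ via Lyapunov's equation to obtain $\trace\hat T_{GL}(x,x)$.

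The only cosmetic difference is in how the determinant identification is carried out: the paper works on the function-space side, writing the chain $\det(I+\lambda K_x)=\det(I+\lambda\Gamma_{\phi_{(x)}^\top}\Gamma_{\psi_{(x)}})=\det(I+\Gamma_{\hat\Phi_{(x)}})=\det(I+\hat R_x)$ using the $2\times 2$ block Hankel operator $\Gamma_{\hat\Phi_{(x)}}$, whereas you work on the state-space side, block-triangularising $I+\hat R_x$ on $H\oplus H$ to obtain $\det(I+\lambda Q_xP_x)$ and then using the observation/reachability factorisation plus cyclicity of the Fredholm determinant. Both arguments are Sylvester-type reductions and are equivalent; your version has the mild advantage of making explicit the passage from $L^2$ to $H$. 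Your flagging of the shift-parameter bookkeeping between $K_x=S_x^\dagger KS_x$ and the lower limit $x$ in $\hat R_x$ is well placed: that is exactly where care is needed, and the paper's proof handles it by the shifted-impulse convention $\phi_{(x)}(y)=\phi(y+2x)$ rather than by your direct kernel computation.
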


\begin{proof}

See \cite[Lemma 5.1]{B1} for the Gelfand--Levitan equation. With $\phi_{(x)}(y)=\phi (y+2x)$, the Fredholm determinant satisfies
  \begin{align} \det (I+\lambda K_x)
  &=\det (I+\lambda\Gamma_{\phi_{(x)}^\top}\Gamma_{\psi_{(x)}})\nonumber\\
  &=\det (I+\Gamma_{\hat\Phi_{(x)}})\nonumber\\
  &=\det (I+\hat R_x).
  \end{align}
Now $\hat R_x$ satisfies $-{\frac{d\hat R_x}{dx}}=e^{-x\hat A}\hat B\hat Ce^{-x\hat A}$, so by Proposition 2.4 of \cite{B1}, we have
  \begin{align}
  {\frac{d}{dx}}\log\det (I+\hat R_x)
    &=\trace \Bigl( (I+\hat R_x)^{-1}{\frac{d\hat R_x}{dx}}\Bigr)\nonumber\\
    &=-\trace \bigl((I+\hat R_x)^{-1}e^{-x\hat A}\hat B\hat Ce^{-x\hat A}\bigr)\nonumber\\
    &=-\trace \bigl( \hat Ce^{-x\hat A}(I+\hat R_x)^{-1}e^{-x\hat A}\hat B \bigr),
  \end{align}
as required.
\end{proof}

Corollary \ref{crypto}(ii) applies to several problems about scattering for Schr\"odinger operators on the real line, as in the following.

\begin{ex} \label{scatteringex} We consider a skew symmetric $A$ as in Corollary \ref{crypto}(ii), and how one approximates a unimodular function on the spectrum. Let $b_1,b_1\in C_0(\Rb ;\Cb)$ such that $b_1(-k)=\overline{b_1(k)}$,  $b_2(-k)=\overline{b_2(k)}$, $\vert b_1(k)\vert =\vert b_2(k)\vert$ and let $b(k)=b_1(k)b_2(k)$, so $b(-k)=\overline{b(k)}$. As in \cite[Theorem 4.2]{B1} we consider the linear system $(-A,B,C)$ given by
\begin{align} e^{-xA}:L^2(\Rb ;\Cb )\rightarrow L^2(\Rb ;\Cb )&:\qquad f(k)\mapsto e^{-ikx}f(k);\nonumber\\
B: \Cb\rightarrow L^2( \Rb; \Cb )&\qquad  a\mapsto b_1(k)a;\nonumber\\
C: L^2(\Rb; \Cb )\rightarrow \Cb: & \qquad f\mapsto {\frac{1}{2\pi}}\int_{-\infty}^\infty b_2(k)f(k) dk.\end{align}
Then $(e^{-xA})_{x\in \Rb}$ is a unitary group of operators, and $\sigma (A)=i\Rb$. For $\zeta>0$, the operator $(\zeta I-A)(\zeta I+A)^{-1}$ is unitary, being given by multiplication by a unimodular function.  
This example occurs in scattering problems \cite[Theorem 1.3 (3) and Theorem 4.2]{B1} and \cite[Section 5]{EM}. To convert our notation to that of \cite[p.~486]{EM}, let the left-hand scattering coefficient be $s_{21}(k)=b(k)$ and $\phi (x)=(2\pi )^{-1}\int_{-\infty}^\infty s_{21}(k) e^{ixk}dk$, so the Fredholm determinant is
\begin{equation}\label{scattertau}\vartheta (s_{21}) =\det (I+\Gamma_\phi )=\det (I+R_0).\end{equation}
Note that with $\phi_{(y)}(x)=\phi (x+2y )$, the backward shift $\phi (x)\mapsto \phi_{(y)}(x)$ arises from multiplication $b(k)\mapsto e^{2iky}b(k)$.\par
\indent Let $(\lambda_j)_{j=1}^\infty$ be a complex sequence such that $\Re \lambda_j>0$ for all $j$, and suppose that there exists $\delta_1>0$ such that
\begin{equation}\label{Carleson}\prod_{j=1; j\neq \ell}^\infty \Bigl\vert {\frac{\lambda_\ell-\lambda_j}{\lambda_\ell+\bar\lambda_j}}\Bigr\vert\geq \delta_1 \qquad (\ell =1, 2, \dots ); \end{equation}
then $(\lambda_j)_{j=1}^\infty$ satisfies Carleson's interpolation condition. Then the Blaschke product, with convergence factors,  
\begin{equation}\label{Blaschke} \beta_1(z)=\prod_{j=1}^\infty e^{i\alpha_k} {\frac{z-\lambda_j}{z+\bar\lambda_j}}; \qquad e^{i\alpha_j} {\frac{1-\lambda_j}{1+\bar\lambda_j}}\geq 0 \end{equation}
has simple zeros at $\lambda_j$ for $j=1, 2, \dots$, and is called an interpolating Blaschke product. One shows that for almost all $k\in \Rb$, there is a limit $\beta_1(ik)=\lim_{x\rightarrow 0+}\beta_1(x+ik)$ such that $\vert \beta_1(ik)\vert =1$. Jones \cite{J} refined the Douglas-Rudin approximation theorem and proved that for any measurable
function $U:i\Rb\rightarrow \Cb $ such that $\vert U(ik)\vert =1$ for all $k\in \Rb$, and $\varepsilon >0$, there exist interpolating Blaschke products $\beta_1$ and $\beta_2$ such that 
\begin{equation}\label{rationalapprox} \Bigl\vert U(ik)-{\frac{\beta_1(ik )}{\beta_2(ik)}}\Bigr\vert <\varepsilon \end{equation}
for almost all $k\in \Rb$. Hence there exists a sequence $(\Phi_N(z))_{N=1}^\infty$ of rational functions such that $\Phi_N(z)\rightarrow \beta_1(z)/\beta_2(z)$ as $N\rightarrow\infty$ for all $\Re z>0$ and $\vert \Phi_N(ik)\vert =1$ for all $k\in \Rb$ and $N=1,2, \dots $.
\end{ex}

\begin{rem}\label{BakerAkhiezer} We return to the differential equation of Example~\ref{Schrodingerasmatrix}. Choose $x_0>0$ such that $\Vert R_x\Vert<1$ for all $x>x_0$ and let $H=L^2( (x_0, \infty );\Cb )$. 
Consider
   \begin{equation}\label{BakerAk}
   f (x;\kappa ) = \cos \sqrt{\kappa} x+\int_x^\infty T_{GL}(x,y) \cos \sqrt{\kappa} y\, dy\qquad (x>x_0)
   \end{equation}
so that $-{\frac{d^2}{dx^2}}f (x;\kappa )+u(x)f (x;\kappa )=
\kappa f (x;\kappa )$ by \cite[Section 3]{BN}.

In section~\ref{S:Spectral}, we consider the family $(-A,(\zeta I+A)(\zeta I-A)^{-1}B,C)$ and use this to introduce the notion of a spectral curve. In \cite{DGP} and \cite[Proposition 5.14]{SW}, the authors consider a Baker--Akhiezer function $\varphi_W$ which is parametrized by a Grassmannian of subspaces of $H$ and is expressed as $\varphi_W (x)=K(e^{xz})$ where $K$ is a formal operator; our $I+T_{GL}$ replaces $K$.
\end{rem}


\section{The xi function and Darboux addition}\label{S:xi}

Following Kre{\u{\i}}n \cite{Kr}, Gesztesy and Simon \cite{GS} introduced the \textsl{xi} or spectral displacement function associated with a one-dimensional Schr{\"o}dinger operator, and described how this function connects the Green's function (\ref{Krein}), the potential and other information in the study of spectral and inverse spectral problems.

Let $L$ denote the densely defined self-adjoint operator $L=-{\frac{d^2}{dx^2}}+u$ in $L^2(\Rb ;\Cb )$ associated to a potential $u \in C_b(\Rb,\Rb)$. 
Suppose that $\psi_{+},\psi_{-}$ are solutions of $L\psi =\lambda \psi$ such that $\psi_+$ restricts to function in $L^2((0,\infty) ;\Cb )$ and $\psi_{-}$ restricts to a function in $L^2((-\infty,0) ;\Cb )$.

\begin{defn}
Kodaira's \cite{Ko} characteristic matrix is
\begin{equation}\label{Charmatrix} \Xi (x,\lambda )=\begin{bmatrix} 2\psi_+\psi_-&\psi'_+\psi_-+\psi_+\psi'_-\\
\psi'_+\psi_-+\psi_+\psi'_-&2\psi'_+\psi'_-\end{bmatrix}\end{equation}
all evaluated at $(x, \lambda )$.
\end{defn}
Then $\Xi (x, \lambda )$ is an entire function of $\lambda$ and $\Xi (x,\lambda )=\Xi (x,\lambda )^\top.$ 
The Green's function $G$ for $(\lambda I-L)^{-1}$ is
   \begin{equation}\label{GreenWronskian}  G(x,y;\lambda ) = 
      \begin{cases}
   \psi_-(x;\lambda )\psi_+(y,\lambda )/\Wr(\psi_-,\psi_+)  & (x<y) \\
   \psi_-(y;\lambda )\psi_+(x;\lambda )/\Wr(\psi_-, \psi_+) & (x>y).
    \end{cases}
    \end{equation}
For any $f\in L^2(\Rb, \Cb)$, the function
$h(x)=\int_{-\infty}^\infty G(x,y;\lambda )f(y) \,dy$ satisfies $f=\lambda h+h''-uh$.

We begin by considering the Darboux addition (\ref{Darbouxaddition}). The we consider the infinitesimal Darboux addition (\ref{infinitesimalDarboux}), which is directly related to the diagonal of the Green's function. 
Suppose that $(e^{-tA})_{{t\geq 0}}$ is a $(C_0)$ contraction semigroup, so $A$ has dense domain $\calD (A)$; that $A$ is accretive, so $\Re \langle Af,f\rangle \geq 0$ for all $f\in \calD (A)$; and that $(\lambda I+A)\calD (A)$ is dense in $H$ for some $\lambda>0$. Then for all $\zeta \in \Rb$, the operator $(\zeta I-A)(\zeta I+A)^{-1}$ is a contraction on $H$. 
 In previous papers \cite{BD} and \cite[p.~79]{BN}, we have considered the Darboux addition rule (\ref{Darbouxaddition}) on a continuous time linear system, namely
\begin{equation}\label{Darbouxaddition}
   (-A,B,C) \mapsto \Sigma_\zeta =(-A, (\zeta I+A)(\zeta I-A)^{-1}B,C)\qquad (\vert \arg (-\zeta )\vert <\pi /2).
\end{equation} 
As in \cite{BD}, we are particularly interested in the pair $\Sigma_\infty =(-A, B,C)$ and $\Sigma_0=(-A, -B,C)$, and regard the involution $(-A,B,C)\leftrightarrow (-A,-B,C)$ as analogous to the involution on a hyperelliptic curve, as in \cite{FK} or \cite{Mum}. 
We introduce the notion of a Darboux curve for a linear system, where the definition is motivated by \cite[3.4]{EM}.

\begin{defn}
Let $\calA_0$ be the unital complex subalgebra of $\calL (H)$ that is generated by the operators $(\lambda I-A)(\lambda I+A)^{-1}$ for $\lambda \in \Cb\setminus \sigma (-A)$, and let $\calA$ be the closure of $\calA_0$ in $\calL (H)$ for the operator norm topology. 
The Darboux curve of $(-A,B,C)$, denoted $\Sb$, is the maximal ideal space of the unital commutative Banach algebra $\calA$.  
\end{defn}

The Gelfand transform gives an algebra homomorphism $\calA\to C(\Sb ; \Cb )$, so elements of $\calA$ give functions on $\Sb$; clearly $\Sb$ is unchanged by Darboux addition. 
Note that $H$ is a Hilbert module over $\calA$, and $H$ is a Hilbert module over the complex rational functions with poles off $\sigma (A)$. One regards $(\zeta I+A)(\zeta I-A)^{-1}$ as a multiplier on $\calA$ or a rational function on $\Sb$, with inverse $(\zeta I+A)^{-1}(\zeta I -A)$. Thus Darboux addition corresponds to addition of divisors on $\Sb$. In the special cases covered by Corollary \ref{crypto} we introduced $\Sb_e$ which involves the essential spectrum of $A$, hence $\Sb_e$ is a closed subset of $\Sb$. The meaning of $\zeta$ will be revealed in Theorem~\ref{BurchnallChaundy} as a spectral parameter for a spectral curve $\Xb$. 

\begin{defn}  For $0<\theta \leq\pi$, we introduce the sector $S_\theta=\{ z\in \Cb \setminus \{0\}: \vert \arg z\vert <\theta\}$. A closed and densely defined linear operator $-A$ is a generator of type $\calG\calA_b(\theta ,M)$ \cite[Theorem 5.3]{Go} if there exists $\pi/2<\theta<\pi$ and $M\geq 1$ such that $S_\theta$ is contained in the resolvent set of $-A$ and 
$\vert\lambda \vert\Vert (\lambda I+A)^{-1}\Vert_{{\mathcal L}(H)}
    \leq M$ 
for all $\lambda\in S_\theta$. Let ${\calD}(A)$ be the domain of $A$ and ${\calD}(A^\infty )=\cap_{n=0}^\infty {\calD}(A^n)$. See \cite[p.~37]{Go}. 
\end{defn}

\begin{lem}\label{sectorial}
Let $(-A,B,C)$ be a linear
system such that $\Vert e^{-t_0A}\Vert_{{\mathcal L}(H)}<1$
for some $t_0>0$, and that $B$ and $C$ are Hilbert--Schmidt
operators such that $\Vert B\Vert_{{\mathcal L}^2(H_0;H)}\Vert C\Vert_{{\mathcal L}^2(H;H_0)}\leq 1$. 
Suppose further that $-A$ is of type $\calG\calA_b(\theta ,M)$.  
Then the 
trace class operators $(R_x)_{x>0}$ give the unique solution to Lyapunov's equation  (\ref{Lyapunov}) for $x>0$ that
satisfies the initial condition.
\end{lem}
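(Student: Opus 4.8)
The plan is to check directly that $R_x=\int_x^\infty e^{-tA}BCe^{-tA}\,dt$ is a well-defined trace-class-valued family solving Lyapunov's equation with the stated initial condition, and then to prove uniqueness by an operator-conjugation argument. First I would record that $\Vert e^{-t_0A}\Vert<1$ upgrades to exponential stability: writing $t=nt_0+s$ with $0\le s<t_0$ and using the finite uniform bound $\sup_{0\le s\le t_0}\Vert e^{-sA}\Vert$ gives $\Vert e^{-tA}\Vert\le Ke^{-\omega t}$ for some $K\ge1$, $\omega>0$. Since $B,C$ are Hilbert--Schmidt, $BC$ is trace class with $\Vert BC\Vert_{\calL^1(H)}\le\Vert B\Vert_{\calL^2(H_0;H)}\Vert C\Vert_{\calL^2(H;H_0)}\le1$, and $t\mapsto e^{-tA}BCe^{-tA}$ is norm-continuous into $\calL^1(H)$ (multiplication on either side by a fixed compact operator carries bounded strongly-continuous families to norm-continuous $\calL^1(H)$-valued ones) with $\Vert e^{-tA}BCe^{-tA}\Vert_{\calL^1(H)}\le K^2e^{-2\omega t}$. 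Hence the Bochner integral $R_x$ converges in $\calL^1(H)$, $x\mapsto R_x$ is $C^1$ on $[0,\infty)$ with $dR_x/dx=-e^{-xA}BCe^{-xA}$, and $R_x=e^{-xA}R_0e^{-xA}$. Because $-A$ is of type $\calG\calA_b(\theta,M)$, the bound $\Vert Ae^{-tA}\Vert=O(t^{-1})$ together with the exponential decay makes $\int_x^\infty\Vert Ae^{-tA}\Vert\,\Vert BC\Vert\,\Vert e^{-tA}\Vert\,dt$ finite, so each $R_x$ maps $H$ into $\calD(A)$ and $AR_x\in\calL(H)$; thus Lyapunov's equation may be read as an identity of bounded operators.

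To verify the equation, for $f\in\calD(A)$ and $h\in\calD(A^\dagger)$ I would write $\langle R_xf,h\rangle=\int_x^\infty\langle BCe^{-tA}f,e^{-tA^\dagger}h\rangle\,dt$ and use $\tfrac{d}{dt}e^{-tA}f=-e^{-tA}Af$, $\tfrac{d}{dt}e^{-tA^\dagger}h=-e^{-tA^\dagger}A^\dagger h$ together with the fundamental theorem of calculus to obtain
\[
 \langle R_xAf,h\rangle+\langle R_xf,A^\dagger h\rangle
   =-\int_x^\infty\tfrac{d}{dt}\langle BCe^{-tA}f,e^{-tA^\dagger}h\rangle\,dt
   =\langle e^{-xA}BCe^{-xA}f,h\rangle=-\Bigl\langle\tfrac{dR_x}{dx}f,h\Bigr\rangle,
\]
the boundary term at $\infty$ vanishing by exponential decay; this is exactly $dR_x/dx=-AR_x-R_xA$, and at $x=0$ it reduces to $AR_0+R_0A=BC$, matching the initial condition $(dR_x/dx)_{x=0}=-BC$.

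For uniqueness, let $(\tilde R_x)_{x\ge0}$ be another trace-class solution; taking $x\to0+$ in the equation and combining with the initial condition gives $A\tilde R_0+\tilde R_0A=BC$ (weakly), so $Y:=R_0-\tilde R_0$ satisfies $AY+YA=0$. For $f\in\calD(A)$, $h\in\calD(A^\dagger)$ the map $t\mapsto\langle Ye^{-tA}f,e^{-tA^\dagger}h\rangle$ has vanishing derivative (the semigroup preserves $\calD(A)$ and commutes with $A$ there, and $AY+YA=0$), hence is constant, and since $\Vert e^{-tA}Ye^{-tA}\Vert\le K^2e^{-2\omega t}\Vert Y\Vert\to0$ it equals $0$; as $\calD(A),\calD(A^\dagger)$ are dense, $Y=0$, so $R_0=\tilde R_0$. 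To propagate this to $x>0$, fix $x_0>0$ and set $\psi(s)=\langle(R_s-\tilde R_s)e^{-(x_0-s)A}f,e^{-(x_0-s)A^\dagger}h\rangle$ for $0\le s\le x_0$; differentiating, using Lyapunov's equation for both $R_s$ and $\tilde R_s$ and $Ae^{-(x_0-s)A}=e^{-(x_0-s)A}A$ on $\calD(A)$, one finds $\psi'(s)=0$, so $\psi(x_0)=\psi(0)$, i.e. $\langle(R_{x_0}-\tilde R_{x_0})f,h\rangle=\langle(R_0-\tilde R_0)e^{-x_0A}f,e^{-x_0A^\dagger}h\rangle=0$, whence $R_{x_0}=\tilde R_{x_0}$.

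The main obstacle is the domain bookkeeping in these derivative computations. Definition~\ref{Lyapunovdef}(i) only provides a general solution weakly, so one must justify the product rule for $\langle R_sg(s),k(s)\rangle$ when $R_s$ is differentiable only against fixed vectors and the test vectors $g(s)=e^{-(x_0-s)A}f$, $k(s)=e^{-(x_0-s)A^\dagger}h$ vary with $s$; this requires local norm-boundedness of $\{R_s\}$, which follows from weak differentiability via the uniform boundedness principle, and it requires that the relevant boundary terms genuinely vanish. The analyticity hypothesis $\calG\calA_b(\theta,M)$ supplies the smoothing ($e^{-tA}H\subseteq\calD(A^\infty)$ for $t>0$ and $\Vert Ae^{-tA}\Vert=O(t^{-1})$) that makes $AR_x$ bounded and lets all of these manipulations, and in particular the commutation $Ae^{-rA}=e^{-rA}A$ used above, go through cleanly.
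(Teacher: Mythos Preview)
Your argument is correct and is, in effect, a self-contained expansion of what the paper does: the paper's proof merely records that the $\calG\calA_b(\theta,M)$ hypothesis yields an analytic semigroup (via \cite[Theorem~5.3]{Go}) and then defers entirely to \cite[Theorem~2.3]{BN} for the existence and uniqueness. Your exponential-stability upgrade, the Bochner-integral construction of $R_x$ in $\calL^1(H)$, the weak verification of Lyapunov's equation, and the conjugation argument for uniqueness are exactly the ingredients one expects that citation to supply, and you have used the analyticity in the same place the paper signals it is needed---to make $AR_x$ bounded and to keep the domain bookkeeping honest in the derivative computations.

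The one point worth tightening is the uniqueness step. Your propagation argument via $\psi(s)=\langle (R_s-\tilde R_s)e^{-(x_0-s)A}f,\,e^{-(x_0-s)A^\dagger}h\rangle$ differentiates a pairing in which both the operator and the test vectors vary, while Definition~\ref{Lyapunovdef}(i) only supplies weak differentiability against fixed vectors. You correctly identify this and invoke local norm-boundedness of $\{R_s\}$ via the uniform boundedness principle together with the norm-differentiability of $s\mapsto e^{-(x_0-s)A}f$ on $\calD(A)$; it would be worth writing out the three-term splitting of $\psi(s+\varepsilon)-\psi(s)$ explicitly so that the passage to the limit is transparent. A slightly cleaner alternative, once you have $R_0=\tilde R_0$, is to observe that any weak solution of $\tfrac{d}{dx}S_x=-AS_x-S_xA$ satisfies $\tfrac{d}{dx}\langle e^{xA^{\!*}}S_xe^{xA}f,h\rangle=0$ for $f\in\calD(A)$, $h\in\calD(A^\dagger)$ (here the analyticity is not even needed, only strong continuity and the core property), giving $S_x=e^{-xA}S_0e^{-xA}$ directly; this avoids running the conjugation backwards in $s$.
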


\begin{proof} By \cite[Theorem 5.3]{Go}, there is a $(C_0)$ uniformly bounded semigroup $(e^{-tA})_{t>0}$ which has an analytic extension to the sector $\{ t\in \Cb : \Re t>0, \vert \arg t\vert <\theta -\pi/2\}$ so that for all $\varepsilon >0$ there exists $M_\varepsilon <\infty$ such that $\Vert e^{-tA}\Vert\leq M_\varepsilon$ for all $t\in \Cb $ such that $\Re t>0$ and $\vert \arg t\vert <\theta-\pi/2-\varepsilon$.  Then we can follow the proof of \cite[Theorem 2.3]{BN}.
\end{proof}

Suppose that $(-A,B,C)$ is a continuous time linear system with state space $H$ and input and output space $\Cb$ which satisfies the conditions of Lemma~\ref{sectorial}, so $C$ is scalar-valued. Let $\calE \subseteq L(H)$ be as in Theorem~\ref{Lyapunovthm}
\cite[Theorem 4.4]{BN}. Suppose that $I+R_x$ is invertible in $\calL (H)$ and define $F_x=(I+R_x)^{-1}$; then define the bracket operation 
$\lfloor\,\cdot \,\rfloor_x: C((0, \infty );\calE )\rightarrow C^\infty ((0, \infty ); \Cb )$
\begin{equation}\label{bracket} 
   \lfloor X\rfloor_x =
      Ce^{-xA}F_xXF_xe^{-xA}B
        \qquad (X\in C((0, \infty );\calE ),\, x>0),
\end{equation}
so that $\lfloor X\rfloor_x$ is a scalar-valued function of $x>0$. Here $(F_x)_{x>0}$ is a family of elements of ${\calE}$. Equipped with the special associative multiplication 
\begin{equation} \label{product}
  X\ast Y = X\bigl(AF_x
      + F_xA-2F_xAF_x\bigr)Y
        \qquad (X,Y\in C((0, \infty ); \calE ))
\end{equation}
and the formal first-order differential operator
\begin{equation}\label{deriv}
  \partial X = A(I-2(I+R_x)^{-1})X + {\frac{dX}{dx}} +  
                 X(I-2(I+R_x))^{-1}A \qquad (X\in C^\infty (0, \infty );\calE));
\end{equation}
then $C^\infty ((0, \infty );\calE )$ becomes a differential ring in sense of \cite{vdPS}.

\begin{lem}\label{bracketlem} 
The bracket operation  $\lfloor \, \cdot \, \rfloor_x : \calE \rightarrow C^\infty ((0, \infty ); \Cb )$ 
gives a homomorphism of differential rings, so that 
\begin{equation} \quad 
\lfloor X\ast Y\rfloor_x
    = \lfloor X\rfloor_x\lfloor Y\rfloor_x, \qquad
  \lfloor \partial X\rfloor_x 
    = {\frac{d}{dx}}\lfloor X\rfloor_x.
\end{equation}
\end{lem}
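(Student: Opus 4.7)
The whole lemma turns on one algebraic identity: the ``special multiplier'' $AF_x + F_xA - 2F_xAF_x$ appearing in the star product (\ref{product}) is precisely $F_x e^{-xA}BC e^{-xA}F_x$. To see this I would first differentiate $F_x = (I+R_x)^{-1}$, using Lyapunov's equation (\ref{Lyapunov}) and the observation that ${\frac{dR_x}{dx}}=-e^{-xA}BCe^{-xA}$, to obtain
\[ e^{-xA}BCe^{-xA} = AR_x + R_xA. \]
Sandwiching with $F_x$ and using $F_xR_x = I-F_x = R_xF_x$ gives
\begin{equation}\label{starmult}
  F_x e^{-xA}BC e^{-xA} F_x = F_x(AR_x+R_xA)F_x = AF_x + F_xA - 2 F_xAF_x,
\end{equation}
and as a byproduct, $dF_x/dx$ equals the same quantity. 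This is the identity the star product is designed around.

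For the multiplicativity $\lfloor X*Y\rfloor_x = \lfloor X\rfloor_x \lfloor Y\rfloor_x$, I would simply insert (\ref{starmult}) into the definition of the bracket:
\begin{align*}
  \lfloor X*Y\rfloor_x
  &= Ce^{-xA}F_x X\bigl(AF_x + F_xA - 2F_xAF_x\bigr) Y F_x e^{-xA}B \\
  &= Ce^{-xA}F_x X\bigl(F_x e^{-xA} BC e^{-xA} F_x\bigr) Y F_x e^{-xA}B \\
  &= \bigl(Ce^{-xA}F_x X F_x e^{-xA}B\bigr) \bigl(Ce^{-xA}F_x Y F_x e^{-xA}B\bigr)
   = \lfloor X\rfloor_x\lfloor Y\rfloor_x.
\end{align*}

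For the compatibility with differentiation, I would apply the product rule to $\lfloor X\rfloor_x = Ce^{-xA}F_xXF_xe^{-xA}B$, using $\frac{d}{dx}(Ce^{-xA}) = -Ce^{-xA}A$, $\frac{d}{dx}(e^{-xA}B) = -Ae^{-xA}B$, and the expression $dF_x/dx = AF_x+F_xA-2F_xAF_x$ derived above. Grouping the two terms on the left of $X$ gives
\[ Ce^{-xA}\bigl[-AF_x + (AF_x+F_xA-2F_xAF_x)\bigr] = Ce^{-xA}F_x A(I-2F_x), \]
and symmetrically the two terms on the right collapse to $X(I-2F_x)A F_x e^{-xA}B$ (after sliding an $F_x$ out of each factor). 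Combined with the middle term $Ce^{-xA}F_x (dX/dx) F_x e^{-xA}B$, this is exactly $Ce^{-xA}F_x(\partial X)F_xe^{-xA}B = \lfloor \partial X\rfloor_x$, where $\partial$ is as in (\ref{deriv}).

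The main obstacle is the algebraic bookkeeping: keeping track of the signs and the $F_x$-factors in the cancellation, and reading (\ref{deriv}) with the understanding that the last term stands for $X(I-2F_x)A$ (i.e.\ $I-2(I+R_x)^{-1}$ rather than $(I-2(I+R_x))^{-1}$). Once (\ref{starmult}) is in hand, both assertions reduce to this purely mechanical collapse.
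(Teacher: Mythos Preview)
Your proof is correct and follows exactly the route the paper indicates: the paper's own proof says only that the lemma ``follows from Lyapunov's equation (\ref{Lyapunov}) as in \cite[Theorem 4.4]{BN}'', and what you have written is precisely the computation that reference carries out. Your key identity $F_x e^{-xA}BCe^{-xA}F_x = AF_x + F_xA - 2F_xAF_x$ is the heart of the matter, and you have also correctly spotted and corrected the typographical slip in (\ref{deriv}): the final term must read $X(I-2(I+R_x)^{-1})A$ rather than $X(I-2(I+R_x))^{-1}A$, else the derivative identity fails.
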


\begin{proof} This follows from Lyapunov's equation (\ref{Lyapunov}) as in \cite[Theorem 4.4]{BN}.
\end{proof}

This is a counterpart to P\"oppe's identities for Hankel integral operators, as in \cite[section 3.5]{McK2}. Using Lemma \ref{bracketlem}, one can move between operator identities in the algebra $\calE_0$ on $H$ to differential equations for $\Cb$-valued functions on $(0, \infty )$; see also \cite[Section 2]{BD}.

To associate a Schr{\"o}dinger operator with a linear system $(-A,B,C)$, we introduce the potential $u(x) = -4\lfloor A \rfloor_x$
, which also depends on the $B$ and $C$ via the bracket (\ref{bracket}). This is equivalent to  
 \begin{equation}\label{Dyson} 
 u(x)=-2{\frac{d^2}{dx^2}}\log\det (I+R_x)  
 \end{equation} 
as in Dyson's determinant formula; see \cite[(1.12)]{BN} and (\ref{scattertau}). By Proposition \ref{Levitan}, the corresponding Schr\"odinger operator is $L=-{\frac{d^2}{dx^2}}+u$.
The Darboux addition rule (\ref{Darbouxaddition}), $\zeta \mapsto \Sigma_\zeta$, produces a family of linear systems for which we can similarly associate bracket operations,  potentials $u_\zeta$ and Schr{\"o}dinger operators $L_\zeta = \frac{d^2}{dx^2} + u_\zeta$. The evolution of the potentials under this rule is of the form $u_\zeta = u - \zeta X(u) + o(\zeta)$ where $X(u)$ is a vector field on the space of potentials. Following McKean \cite{McK} we showed in \cite[Proposition~2.5]{BD} that under certain conditions, the infinitesimal Darboux addition $X(u)$ is completely determined by the diagonal Green's function.
Let $F_x=(I+R_x)^{-1}$ and introduce 
\begin{equation}\label{infinitesimalDarboux} 
   X(u) = {\frac{-2}{\sqrt{-\lambda }}} 
     \bigl\lfloor A(I-2F_x)A(\lambda I+A^2)^{-1}  
       + A(\lambda I+A^2)^{-1}(I-2F_x)A\bigr\rfloor_x. 
\end{equation}
as in \cite[Proposition 2.5]{BD}.

\begin{prop}\label{spectralprop} 
\begin{enumerate}[(i)]
\item The entries of the characteristic matrix $\Xi (x, \lambda )$ for $x>0$ belong to $\calE$.
\item Let $L$ be the Schr\"odinger operator in $L^2(\Rb ;\Cb )$, and let $\Xi (x, \lambda )$ be defined for $x\in \Rb$. Then $\Im \Xi (x, \lambda ) $ determines the spectral measure of $L$.

\end{enumerate}
\end{prop}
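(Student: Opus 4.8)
The plan is to reduce both parts to the diagonal Green's function $G(x,x;\lambda)=\psi_+(x)\psi_-(x)/\Wr(\psi_-,\psi_+)$, which by the discussion around (\ref{diagonalgreen}) and \cite[Proposition~2.5]{BD} is already expressible through the linear system. Writing $F_x=(I+R_x)^{-1}$ and combining the relation $X(u)=-2\,\frac{d}{dx}G(x,x;\lambda)$ with formula (\ref{infinitesimalDarboux}), one checks -- using the differential-ring identity $\partial X=A(I-2F_x)X+\frac{dX}{dx}+X(I-2F_x)A$ and Lemma~\ref{bracketlem} -- that $\frac{d}{dx}G(x,x;\lambda)=\frac{d}{dx}\lfloor\mathcal G(\lambda)\rfloor_x$ with $\mathcal G(\lambda)=\frac{1}{\sqrt{-\lambda}}\,A(\lambda I+A^2)^{-1}$; here $(\lambda I+A^2)^{-1}$ lies in $\calE$ when $A$ is self-adjoint and non-negative as in Corollary~\ref{crypto}(i), being a norm limit of polynomials in the operators $(A-\mu I)(A+\mu I)^{-1}$, and $F_x\in\calE$ there since $I+R_x\ge I$. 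Fixing the additive constant by the behaviour as $x\to\infty$ (where $R_x\to0$ and $G(x,x;\lambda)$ tends to the free value, itself a bracket in these realizations) gives $G(x,x;\lambda)=\lfloor\mathcal G_0(\lambda)\rfloor_x$ for an explicit $\mathcal G_0(\lambda)\in\calE$, so that the $(1,1)$ entry of $\Xi$ is $2\psi_+\psi_-=2\Wr(\psi_-,\psi_+)\,\lfloor\mathcal G_0(\lambda)\rfloor_x$.

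For the remaining entries I differentiate and invoke Lemma~\ref{bracketlem}, by which the bracket intertwines $\partial$ with $d/dx$ and $\ast$ with pointwise product. Thus the off-diagonal entry is $\psi'_+\psi_-+\psi_+\psi'_-=\frac{d}{dx}(\psi_+\psi_-)=\Wr(\psi_-,\psi_+)\,\lfloor\partial\mathcal G_0(\lambda)\rfloor_x$, and differentiating once more and substituting $\psi''_\pm=(u-\lambda)\psi_\pm$ gives $2\psi'_+\psi'_-=\frac{d^2}{dx^2}(\psi_+\psi_-)-2(u-\lambda)\psi_+\psi_-$; since $u=-4\lfloor A\rfloor_x$ and $\lfloor A\rfloor_x\,\lfloor\mathcal G_0\rfloor_x=\lfloor A\ast\mathcal G_0\rfloor_x$, the $(2,2)$ entry becomes $\Wr(\psi_-,\psi_+)\,\lfloor\partial^2\mathcal G_0(\lambda)+8\,A\ast\mathcal G_0(\lambda)+2\lambda\,\mathcal G_0(\lambda)\rfloor_x$. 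Hence every entry of $\Xi(x,\lambda)$ is the bracket of an explicit element of the differential ring generated over $\calE$ by $A$, $(\lambda I+A^2)^{-1}$ and $F_x$, which is part~(i) and makes precise the abstract's statement that the entries are given by formulas involving the infinitesimal Darboux addition.

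For part~(ii) I would appeal to Weyl--Titchmarsh--Kodaira theory for the whole-line operator $L$. For fixed $x\in\Rb$, after normalization by $\Wr(\psi_-,\psi_+)$ and a change of basis, $\Xi(x,\lambda)$ is Kodaira's characteristic matrix with base point $x$, and it is a $2\times2$ matrix-valued Herglotz function of $\lambda$ because it is assembled from the resolvent kernel $G(x,y;\lambda)$ of the self-adjoint operator $L$: the diagonal values of $G$, $\partial_x G$ and $\partial_x\partial_y G$ arise as limits, from test functions approximating $\delta_x$ and $\delta'_x$, of the Herglotz forms $f\mapsto\langle(\lambda I-L)^{-1}f,f\rangle$. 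By the matrix Stieltjes--Perron inversion formula, $\tfrac{1}{\pi}\Im\Xi(x,\kappa+i0)\,d\kappa$, together with point masses at any eigenvalues, is the spectral matrix measure of $L$ relative to the boundary data at $x$; and since the spectral matrices for different base points are conjugate via the symplectic transfer matrix of $L\psi=\kappa\psi$ -- equivalently, as noted after (\ref{specdensity}), moving the boundary condition does not alter the spectral type -- the imaginary part of $\Xi$ determines the spectral measure of $L$ unambiguously. Combined with (i), this yields the spectral measure directly in terms of $(-A,B,C)$.

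The main obstacle is the very first step of (i): establishing $G(x,x;\lambda)=\lfloor\mathcal G_0(\lambda)\rfloor_x$ with the correct $\mathcal G_0(\lambda)\in\calE$ and additive constant. If one may cite (\ref{diagonalgreen}) and \cite[Proposition~2.5]{BD} this is bookkeeping; proving it ab initio requires representing the Weyl solutions $\psi_\pm$ through the Gelfand--Levitan kernel $T_{GL}$ -- as in Remark~\ref{BakerAkhiezer} for $\psi_+$, with the mirror construction at $-\infty$ for $\psi_-$ -- and verifying the resulting operator identity from Lyapunov's equation (\ref{Lyapunov}); the delicate points are the convergence of the integral defining $\psi_-$ when only the behaviour of the semigroup at $+\infty$ is controlled, and confirming that $(\lambda I+A^2)^{-1}$ genuinely belongs to $\calE$, which uses the self-adjointness hypothesis of Corollary~\ref{crypto}(i). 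For (ii) the corresponding technical point is verifying the matrix-Herglotz property with the exact normalization of $\psi_\pm$ used in the definition of $\Xi$, and that the base point enters only through a spectrally irrelevant conjugation.
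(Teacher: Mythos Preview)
Your proposal is correct and follows essentially the same route as the paper: identify the $(1,1)$ entry of $\Xi$ with (a constant multiple of) the diagonal Green's function, use its bracket representation via (\ref{diagonalgreen}) and the infinitesimal Darboux formula, then obtain the remaining entries by differentiating and using $\psi_\pm''=(u-\lambda)\psi_\pm$; for (ii) both you and the paper invoke Kodaira's formula (\ref{Kodaira}) and the Weyl $m_\pm$ functions. The only differences are cosmetic: the paper quotes the series (\ref{diagonalgreen}) directly rather than integrating $X(u)=-2\frac{d}{dx}G(x,x;\lambda)$ and fixing a constant at infinity, so your concern about the additive constant is moot; and for (ii) the paper makes the link to $m_\pm$ explicit via the identities $-G(x,x;\lambda)=-1/(m_++m_-)$ and $\frac{d}{dx}\log(-G(x,x;\lambda))=m_+-m_-$, whereas you appeal to the matrix Stieltjes--Perron inversion more abstractly.
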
 
\begin{proof} (i) We observe that $\det \Xi (x, \lambda )=-{\hbox{Wr}}(\psi_+, \psi_- )^2$, which is a non-zero constant, and the top-left entry of $\Xi$ is $\psi_+\psi_-$ as in the numerator of $-G(x,x;\lambda )$ given by (\ref{GreenWronskian}). 

Let $\lambda_0$ be the bottom of the spectrum of $L$. Then by \cite[(3.2)]{BD}, the diagonal Green's function for $(\lambda I-L)^{-1}$ is
\begin{equation}\label{diagonalgreen} 
  G(x,x;\lambda )
   ={\frac{1}{\sqrt{-\lambda}}}
      \Bigl({\frac{1}{2}}-{\frac{\lfloor A\rfloor_x}{\lambda}}+{\frac{\lfloor A^3\rfloor_x}{\lambda^2}}
       -{\frac{\lfloor A^5\rfloor_x}{\lambda^3}}+\dots \Bigr)
       \qquad (\lambda <-\lambda_0),
\end{equation}
where $\lambda$ is a spectral parameter for $L$.
Also $X(u)=-2{\frac{d}{dx}} G(x,x;\lambda )$ gives an element of $\calE$, as does 
${\frac{d^2}{dx^2}} G(x,x;\lambda )$. Correspondingly, the off-diagonal entry is $(\psi_+\psi_-)'=\psi_+'\psi_-
+ \psi_+\psi_-'$. Finally, the remaining entry is 
  \[ 2\psi_+'\psi_-'=(\psi_+\psi_-)''+2(\lambda -u)\psi_+\psi_-. \]
By Kodaira's theorem \cite{Ko}, for Schr\"odinger's operator in $L^2(\Rb ;\Cb )$, the spectral measure matrix is
  \begin{equation}\label{Kodaira} \mu (0,\lambda ]=\lim_{\varepsilon\rightarrow 0+}{\frac{1}{\pi}}\int_0^\lambda \Im \Xi (x, \nu +i\varepsilon ) \, d\nu .\end{equation}
Here $\mu (\nu, \lambda ]$ is a positive semi-definite matrix for $\lambda \geq \nu $, so one can form a positive matrix measure 
$d\mu (\lambda )$ as in Stieltjes integration.

(ii) Let $m_{\pm}(x,\lambda )$ be the Weyl functions for $Lf=-{\frac{d^2f}{dx^2}}+uf$ in $L^2(\Rb; \Cb )$ with boundary condition $f(x)=0$ as in (\ref{Weylm}), so that $f+m_+h\in L^2((0, \infty );\Cb )$ and $f-m_-h\in L^2((-\infty ,0);\Cb )$. Then $m_{\pm}(x, \lambda )$ are Herglotz functions that have boundary limits 
 \[ m_{\pm}(x,\lambda ) = \lim_{\varepsilon\rightarrow 0+} m_{\pm}(x,\lambda +i\varepsilon ). \] 
From \cite[(6.1) and (6.2)]{GW}, we have the identities
   \begin{align} 
   -G(x,x;\lambda )
      &={\frac{-1}{m_+(x,\lambda )+m_-(x, \lambda )}},\\
   -{\frac{d}{dx}}G(x,x;\lambda )
      &=-{\frac{m_+(x,\lambda )-m_-(x, \lambda )}{m_+(x,\lambda )+m_-(x, \lambda )}}, 
    \end{align}
and so 
  \begin{equation}\label{logdiffgreen} {\frac{d}{dx}}\log \bigl( -G(x,x;\lambda )\bigr) =m_+(x,\lambda )-m_-(x, \lambda ). \end{equation}
Suppose further that the spectrum is purely absolutely continuous. Then the diagonal Green's function determines the spectral measure as
  \begin{equation}\label{specdensityfromgreen} {\frac{d}{dx}}\log \bigl( -G(x,x;\lambda )\bigr) =2\pi i {\frac{d\mu }{d\lambda }}, \end{equation}
at Lebesgue points $\lambda$ of the spectral density, as in (\ref{specdensity}).
\end{proof}

\begin{ex} In Example \ref{scatteringex} and \cite[Theorem 4.2 and Section 6]{B1}, we have shown that the hypotheses of Proposition~\ref{spectralprop}(ii) hold for the scattering case of Schr\"odinger's equation on the real line. The linear systems $(-A,B,C)$ is associates with a unitary group $(e^{-xA})_{x\in\Rb}$ that is determined by the scattering data in \cite[(4.9)]{B1}.   
\end{ex}

Proposition \ref{spectralprop} amounts to a forward spectral theorem, which determines the spectral measure. The inverse spectral problem is also solved likewise, in the sense that the potential $u$ is determined by partial information about $G(x,x; \lambda )$.  

\begin{defn} Let $u\in C_b(\Rb; \Rb )$ and let $L$ be Schr\"odinger's operator $L\psi =-{\frac{d^2 \psi}{dx^2}}+u \psi$ with Green's function as in \ref{GreenWronskian}. 
\begin{enumerate}[(i)] 
\item The \textsl{xi} function of Kre{\u{\i}}n and Gesztesy--Simon \cite{GS} is defined to be 
  \begin{equation}\label{Krein}
    \xi (x,\lambda ) = {\frac{1}{\pi}} \arg 
       \lim_{\varepsilon\rightarrow 0+}
       \bigl( -G(x,x;\lambda+i\varepsilon ) \bigr)
          \qquad (\lambda\in \Rb ).
    \end{equation}
\item The potential $u$ is discretely dominated if $\xi (x,\lambda )=1/2$ holds for all $x\in \Rb$ and almost all $\lambda$ in the essential spectrum of $L$ with respect to Lebesgue measure.
\end{enumerate}
\end{defn}

Now $\log (-G(x,x;\lambda ))$ is a Herglotz function, hence has boundary values as in (\ref{Krein}). Observe that $\xi (x,\lambda )=1/2$ if and only if $G(x,x;\lambda )$ is purely imaginary. Suppose that $u$ is discretely dominated, that $L$ has spectrum $\sigma (L)$ with $\lambda_0=\inf\sigma (L)$ and
   \begin{equation}\label{gaps}
     [\lambda_0, \infty )\setminus \sigma (L)
       = \cup_{j=1}^\ell (\alpha_j, \beta_j),
   \end{equation}
so the spectrum has $\ell$ consecutive gaps $(\alpha_j, \beta_j)$. When $\ell$ is finite, the potential is finite-gap, or algebro-geometric; Theorem \ref{BurchnallChaundy} provides examples.

The notion of a \textsl{xi} function can further be generalized via Pincus's determining function to a bilinear trace formula \cite[Theorem 3.6]{CS}. The determining function, and related determinants, are more conveniently introduced via discrete-time linear systems.  We shall discuss this next, before we return to continuous-time systems at the end of the section.

\begin{defn}\label{discretelinsystem}
\begin{enumerate}[(i)]
  \item Let $U\in {\calL}(H)$, $B\in {\calL}(H_0,H)$, $C\in {\calL}(H,H_0)$, $D\in {\calL}(H_0)$. We denote by 
    $\begin{bmatrix} U&B\\ C&D\end{bmatrix}$ 
 the discrete-time linear system
  \begin{align*} 
   x_{n+1}  &= Ux_n+Bu_n  \\
   y_n      &= Cx_n+Du_n\qquad (n=0, 1, \dots )
 \end{align*}
with input $(u_n)_{n=0}^\infty$ with $u_n\in H_0$, state $(x_n)_{n=0}^\infty$ with $x_n\in H$ and output $(y_n)_{n=0}^\infty$ with $y_n\in H_0$. The transfer function for this system is $\Phi (\lambda )=D+C(\lambda I-U)^{-1}B$. 
  \item Let $V\in \calL (H)$. For
the family of linear systems 
  \begin{equation}\label{determiningsystem}
    \begin{bmatrix}
      U   &  (\mu I-V)^{-1}B\\ 
      iC  &   I
    \end{bmatrix}
     \qquad (\mu \in \Cb\setminus\sigma (V)),
  \end{equation}
Pincus \cite{Pi} defined the determining function
  \begin{equation}\label{determining} 
    E(\lambda , \mu ) = I + i C(\lambda I-U)^{-1} (\mu I-V)^{-1} B,
      \qquad (\lambda \in \Cb \setminus \sigma(U),
            \ \mu \in \Cb \setminus \sigma(V)).
  \end{equation}
\end{enumerate}
\end{defn}

The main application of the determining function is to compute determinants, starting with the identity 
\[ \det E(\lambda ,\mu )=\det (I+i(\mu I-V)^{-1}BC(\lambda I-U)^{-1})\qquad 
(\lambda \in \Cb \setminus \sigma(U),
            \ \mu \in \Cb \setminus \sigma(V)).\] 
In particular, let $U,V\in {\calL}(H)$ be self-adjoint and let  $B\in {\calL}^2(H_0,H)$ and $C\in {\calL}^2(H,H_0)$ satisfy $[U,V]=iBC$.
Then the product $iBC$ is trace class and skew self-adjoint, and by some straightforward manipulations, one can show that
   \[
   \det \Bigl( (\lambda I-U)(\mu I-V)(\lambda I-U)^{-1} 
      (\mu I-V)^{-1}\Bigr)
      =\det \Bigl( I+iC(\lambda I-U)^{-1}(\mu I-V)^{-1}B\Bigr),\]
where the left-hand side is the determinant of a multiplicative commutator. The scope of this formula is extended via Pincus's principal function, namely the function $P$ introduced in the following lemma.

\begin{lem}\label{Principal} (Pincus) Let $Z=U+iV$ be an almost normal operator. 
\begin{enumerate}[(i)] 
\item There exists a compactly supported $P\in L^1(\Rb^2; dxdy; \Rb )$ that satisfies
  \[ 
    \det \Bigl( (\lambda I-U)(\mu I-V)(\lambda I-U)^{-1} (\mu I-V)^{-1}\Bigr) 
    = \exp\Bigl(  {\frac{1}{2\pi i}}\iint 
       {\frac{P(x,y)}{(x-\lambda )(y-\mu )}} \,dxdy\Bigr).
  \]
   \item In particular suppose that  $B=C^\dagger$ where $C:H\rightarrow \Cb$ has rank one with $[U,V]=iBC$. Then the function $P$ in (i) is supported on $\sigma(U)\times \sigma(V)$, takes values in $[0,1]$ and satisfies  
\[ \det E(\lambda , \mu )
 = \exp\Bigl( {\frac{1}{2\pi i}}\iint_{\sigma (U)\times\sigma (V)} {\frac{P(x,y)}{(\lambda -x)(\mu -y)}} \, dxdy \Bigr).\]
\end{enumerate}
\end{lem}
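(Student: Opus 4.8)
The plan is to derive both parts from Pincus's principal function theorem for operators with trace-class self-commutator (\cite{Pi}, \cite{HH}) together with the multiplicative-commutator trace identity (\ref{linPincus}), which is valid for any bounded operators with trace-class commutator by Pincus's identity \cite[p.~182]{HH0}.

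For part (i), I would start by noting that, since $Z=U+iV$ is almost normal, $[U,V]\in\calL^1(H)$; Pincus's theorem then supplies a compactly supported real $P\in L^1(\Rb^2;dxdy)$, the principal function of $Z$, characterised by the trace formula \(\trace\bigl([f(U),g(V)]\bigr)=\frac{1}{2\pi i}\iint_{\Rb^2}P(x,y)\,f'(x)g'(y)\,dx\,dy\) for all $f,g$ holomorphic near $\sigma(U)$, resp.\ $\sigma(V)$. Only single-variable functions of the self-adjoint operators enter, so no functional calculus for non-commuting operators is required; the left side is trace class because $f(U)$ and $g(V)$ can be written as Cauchy integrals of resolvents and \([(z-U)^{-1},(w-V)^{-1}]=(w-V)^{-1}(z-U)^{-1}[U,V](z-U)^{-1}(w-V)^{-1}\). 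Next, for $\lambda\in\Cb\setminus\sigma(U)$ and $\mu\in\Cb\setminus\sigma(V)$ --- so that the compact sets $\lambda-\sigma(U)$ and $\mu-\sigma(V)$ avoid $0$ --- I would fix holomorphic branches of $\log$ there and set $V_1=\log(\lambda I-U)$, $W_1=\log(\mu I-V)$ by holomorphic functional calculus, so $e^{V_1}=\lambda I-U$, $e^{W_1}=\mu I-V$ and $[V_1,W_1]\in\calL^1(H)$. Then (\ref{linPincus}) gives \(\trace([V_1,W_1])=\log\det\bigl((\lambda I-U)(\mu I-V)(\lambda I-U)^{-1}(\mu I-V)^{-1}\bigr)\), while the trace formula with $f(x)=\log(\lambda-x)$, $g(y)=\log(\mu-y)$ (so that $f'(x)g'(y)=\bigl((x-\lambda)(y-\mu)\bigr)^{-1}$) gives \(\trace([V_1,W_1])=\frac{1}{2\pi i}\iint P(x,y)\bigl((x-\lambda)(y-\mu)\bigr)^{-1}dx\,dy\). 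Equating the two and exponentiating proves (i), once the branch of $\log$ is fixed by letting $\lambda,\mu\to\infty$, where the multiplicative commutator tends to $I$ and the integral to $0$.

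For part (ii), $B=C^\dagger$, $C:H\to\Cb$ has rank one and $[U,V]=iBC$ is rank one, hence trace class, so (i) applies to $Z=U+iV$. The elementary identity $M=(\lambda I-U)(\mu I-V)(\lambda I-U)^{-1}(\mu I-V)^{-1}=I+[U,V](\lambda I-U)^{-1}(\mu I-V)^{-1}=I+iBC(\lambda I-U)^{-1}(\mu I-V)^{-1}$ exhibits $M-I$ as a rank-one operator that factors through $H_0=\Cb$, so $\det(I+ST)=\det(I+TS)$ identifies $\det M$ with $1+iC(\lambda I-U)^{-1}(\mu I-V)^{-1}B=\det E(\lambda,\mu)$; substituting into (i) and rewriting $(x-\lambda)(y-\mu)=(\lambda-x)(\mu-y)$ yields the stated formula for $\det E$. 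To locate $\supp P$ I would use that $\det E(\lambda,\mu)$ --- hence $\log\det E$, a nonvanishing analytic function since $M$ is invertible --- extends holomorphically in $\lambda$ to $\Cb\setminus\sigma(U)$ and in $\mu$ to $\Cb\setminus\sigma(V)$, so a Stieltjes-inversion argument applied to the double Cauchy transform forces $P(\cdot,y)$ to vanish off $\sigma(U)$ for a.e.\ $y$ and $P(x,\cdot)$ off $\sigma(V)$ for a.e.\ $x$, i.e.\ $\supp P\subseteq\sigma(U)\times\sigma(V)$. Finally $[Z^\dagger,Z]=2i[U,V]=-2C^\dagger C\le 0$ is rank one, so $Z^\dagger$ is hyponormal with one-dimensional self-commutator, and in this case Pincus's theory (cf.\ \cite{CS}, \cite{Pi}) gives $0\le P\le 1$.

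The main obstacle is the principal function theorem itself --- the existence of $P$ and the trace formula --- which I would cite rather than prove. Granting it, the remaining work is bookkeeping: verifying that (\ref{linPincus}) applies to the operator logarithms $V_1,W_1$ (which amounts to checking $[V_1,W_1]\in\calL^1(H)$ and that the trace formula, stated for polynomials, extends to functions holomorphic near the spectra), and fixing a single branch of the logarithm over the possibly disconnected region $(\Cb\setminus\sigma(U))\times(\Cb\setminus\sigma(V))$, which is handled by analytic continuation from a neighbourhood of infinity using the integral representation just obtained.
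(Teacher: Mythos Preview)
Your proposal is correct and in fact considerably more explicit than the paper's own proof, which simply reads: for (i), see \cite{CP}; for (ii), see \cite[Theorem~7.1]{Pi}. The paper thus treats the lemma as a black-box citation of the Carey--Pincus and Pincus results, whereas you sketch how the determinantal formula in (i) actually follows from the trace-formula characterisation of the principal function via the Helton--Howe/Pincus identity (\ref{linPincus}), and how (ii) follows from (i) together with the multiplicative-commutator identity $\det M=\det E(\lambda,\mu)$, which the paper in fact records just above the lemma. Your outline is essentially the standard route through this material; the points you flag as obstacles---extending the trace formula from polynomials to $\log(\lambda-\cdot)$, and pinning down branches---are genuine but routine, and the support and $[0,1]$-valuedness statements for the rank-one case are exactly what \cite[Theorem~7.1]{Pi} supplies. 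In short: same content, but you have unpacked what the paper merely cites.
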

\begin{proof}  (i) See \cite{CP}.

(ii) See \cite[Theorem 7.1]{Pi}. 
\end{proof}

\begin{prop}\label{cryptotrace} Suppose as in Corollary \ref{crypto}(iii) that $A$ and $A^\dagger$ are almost resolvent commuting, and let $Z=(I-A)(I+A)^{-1}$.
\begin{enumerate}[(i)]
\item Then $Z$ is almost normal and $Z=U+iV$ where $U,V\in \calL (H)$ are self-adjoint with a principal function as in 
Lemma~\ref{Principal}(i) such that
\begin{equation}\label{traceformula} 
   {\trace}\bigl([ f(U,V), h(U,V)]\bigr) 
   ={\frac{1}{2\pi i}}\iint_{\sigma (U)\times\sigma (V)}{\frac{\partial (f,h)}{\partial (x,y)}} P(x,y) \,dxdy
\end{equation}
for all polynomials $f(x,y), h(x,y)\in \Cb [x,y]$.
\item There exists $H_0$ and a continuous time linear system $(-A,B,C)$ with input and output space $H_0$ such that $\calE={\hbox{alg}}\{ I, Z, Z^\dagger\}$ is crypto-integral of dimension one with $2BC=[Z^\dagger, Z]$.
\item In particular, if $[Z^\dagger, Z]$ has rank one, then one can choose $(-A,B,C)$ to have input and output space $\Cb$.\end{enumerate}
\end{prop}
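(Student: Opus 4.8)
The plan is to treat the three parts in order: part~(i) carries the analytic content and is deduced from Pincus's principal function, while parts~(ii) and~(iii) then reduce to a linear-algebraic factorisation. For part~(i), note that $Z=(I-A)(I+A)^{-1}=-I+2(I+A)^{-1}$ is bounded because $A$ is maximally accretive (so $\snorm{(I+A)^{-1}}\le 1$), and correspondingly $Z^\dagger=-I+2(I+A^\dagger)^{-1}$. Hence $[Z,Z^\dagger]=4\,[(I+A)^{-1},(I+A^\dagger)^{-1}]\in\calL^1(H)$ by the almost-resolvent-commuting hypothesis (the choice of resolvent parameter being immaterial, exactly as in the proof of Corollary~\ref{crypto}(iii)), so $Z$ is almost normal. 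Writing $U=\tfrac12(Z+Z^\dagger)$ and $V=\tfrac{1}{2i}(Z-Z^\dagger)$, which are self-adjoint with $Z=U+iV$, one gets $[U,V]=\tfrac{1}{2i}[Z^\dagger,Z]\in\calL^1(H)$; Lemma~\ref{Principal}(i) applied to $Z=U+iV$ then produces the compactly supported principal function $P\in L^1(\Rb^2;dxdy;\Rb)$ and the associated multiplicative-commutator determinant identity. To pass from that identity to the trace formula (\ref{traceformula}) for polynomials $f(x,y),h(x,y)$, I would combine Pincus's identity (\ref{linPincus}), which gives $\trace([f(U,V),h(U,V)])=\log\det\bigl(e^{f(U,V)}e^{h(U,V)}e^{-f(U,V)}e^{-h(U,V)}\bigr)$ since $[f(U,V),h(U,V)]\in\calL^1(H)$, with the multiplicative form of the principal-function identity---obtained from Lemma~\ref{Principal}(i) by writing $e^{f},e^{h}$ through resolvents, equivalently by approximation on the compact support of $P$---which evaluates this determinant as $\exp\bigl(\tfrac{1}{2\pi i}\iint\tfrac{\partial(f,h)}{\partial(x,y)}\,P\,dxdy\bigr)$; equating and taking logarithms gives (\ref{traceformula}).

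For part~(ii), the crypto-integrality of $\calE=\alg\{I,Z,Z^\dagger\}$ involves no reference to $B,C$: the generating set is closed under $\dagger$, so $\calE$ is self-adjoint, and since $\calL^1(H)$ is an ideal and $[Z,Z^\dagger]\in\calL^1(H)$ by part~(i), the Leibniz rule $[a_1a_2,b]=a_1[a_2,b]+[a_1,b]a_2$ writes any commutator of noncommutative polynomials in $Z,Z^\dagger$ as a finite sum of words in $Z,Z^\dagger$ each carrying a single factor $[Z,Z^\dagger]$ or $[Z^\dagger,Z]$; hence $[\calE,\calE]\subseteq\calL^1(H)$ and $\calE$ is crypto-integral of dimension one. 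It remains to realise $[Z^\dagger,Z]$ as $2BC$: it is self-adjoint and trace class, so diagonalise it as $\sum_n\sigma_n(f_n\otimes f_n)$ with $(f_n)$ orthonormal and $\sum_n\vert\sigma_n\vert<\infty$, take $H_0=\ell^2$ with orthonormal basis $(e_n)$, and set $Be_n=\sgn(\sigma_n)\sqrt{\vert\sigma_n\vert/2}\,f_n$ and $Cx=\sum_n\sqrt{\vert\sigma_n\vert/2}\,\langle x,f_n\rangle\,e_n$; then $B\in\calL^2(H_0,H)$, $C\in\calL^2(H,H_0)$, $2BC=[Z^\dagger,Z]$, and $(-A,B,C)$---with the given generator $-A$---is a continuous time linear system with the asserted properties. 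Part~(iii) is the special case $\rank[Z^\dagger,Z]=1$: a rank-one self-adjoint operator is $c\,(g\otimes g)$ for some $c\in\Rb$, $g\in H$, so the diagonalisation collapses to one term and the same recipe yields $B\colon\Cb\to H$, $z\mapsto z\,\sgn(c)\sqrt{\vert c\vert/2}\,g$, and $C\colon H\to\Cb$, $x\mapsto\sqrt{\vert c\vert/2}\,\langle x,g\rangle$, so the input and output space is $\Cb$ and $2BC=[Z^\dagger,Z]$ (with $B=\pm C^\dagger$ according to the sign of $c$, and $B=C^\dagger$ exactly when $[Z^\dagger,Z]\ge 0$).

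The main obstacle is the last step of part~(i): upgrading the determinant identity of Lemma~\ref{Principal}(i)---essentially a statement about resolvent symbols---to the additive trace formula (\ref{traceformula}) for arbitrary polynomials. The resolvent case is a direct differentiation in the spectral parameters, but the extension to all of $\Cb[x,y]$ genuinely uses both Pincus's exponential identity and the fact that $\trace([\,\cdot\,,\,\cdot\,])$ is a cyclic $1$-cocycle (cf. Remark~\ref{HHcalculation}); this is precisely the Helton--Howe--Pincus trace formula, which one either invokes directly from \cite{HH0}, \cite{Pi}, \cite{CP} or re-derives via the cyclic cohomology of $\Cb[x,y]$.
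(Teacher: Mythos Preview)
Your proof is correct and follows essentially the same line as the paper's: you show $[Z,Z^\dagger]=4[(I+A)^{-1},(I+A^\dagger)^{-1}]\in\calL^1(H)$ to get almost normality and invoke the Carey--Pincus principal function for (i), then factor the trace-class self-adjoint commutator $[Z^\dagger,Z]$ as $2BC$ for (ii)--(iii), exactly as the paper does (though the paper merely asserts the factorisation while you construct it explicitly via the spectral decomposition). Your final paragraph is well judged: the paper's proof of (i) is elliptical in simply saying ``Lemma~\ref{Principal}(i) applies,'' whereas you correctly point out that passing from the resolvent determinant identity to the polynomial trace formula (\ref{traceformula}) is the Helton--Howe--Pincus trace formula and must be quoted from \cite{HH0,CP} rather than read off directly.
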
 

\begin{proof} (i) Under the canonical quotient map $\calL (H)\rightarrow \calL (H)/\calL^1(H)$, the images of $Z=(I-A)(I+A)^{-1}$ and $Z^\dagger =(I-A^\dagger )( I+A^\dagger)^{-1}$ give commuting elements of the algebra $\calL (H)/\calL^1 (H)$, so $Z$ is almost normal and as such has an essential spectrum in $\Cb$. Now $[Z^\dagger ,Z]=2i[U,V]\in \calL^1(H)$ is self-adjoint, so (i) of Lemma~\ref{Principal} applies.

(ii) We can choose $H_0$ and 
$B\in {\calL}^2(H_0, H)$ and $C\in \calL^2(H, H_0)$ such that $2BC=[Z^\dagger ,Z]$. Observe that ${\hbox{alg}}\{ I, Z, Z^\dagger\}$ is crypto-integral of dimension one, and contains $BC$.

(iii) If $[Z^\dagger ,Z]$ has rank one, then we can choose $B\in {\calL}(\Cb, H)=H$ and $C\in \calL (H,\Cb )=H'$ such that $2BC=[Z^\dagger, Z]$.
\end{proof}

Proposition~\ref{cryptotrace}(i) gives a formula for the cocycle $\varphi_1$ of Theorem \ref{Lyapunovthm}(ii) in a special case, where the principal function is defined on $\Cb$. As discussed in \cite[Theorem 5.10]{CP2}, this is not the typical situation. 
 
\section{Spectral curves}\label{S:Spectral}

Let $(-A,B,C)$ be a linear system as in Lemma~\ref{sectorial}, and consider the potential $u(x)=-4\lfloor A\rfloor_x$. As before, let $L=-{\frac{d^2}{dx^2}} +u$ be the corresponding Schr\"odinger operator. The main result in this section, Theorem~\ref{BurchnallChaundy}, gives conditions on $(-A,B,C)$ such that one can identify the algebra generated by a certain family of differential operators with an algebra of functions on a particular hyperelliptic curve. This is consistent with the notion of a multiplier curve, as in \cite{McK}.

We consider the spectral curve of the corresponding Schr\"odinger operator, and identify conditions under which the spectral curve is a hyperelliptic curve. In this case, the Schr\"odinger's equation is integrable, in the sense that one can integrate $L\psi =\lambda\psi$ by Liouville's operations. To make this precise, we first need to introduce the concept of a Liouville extension of a differential field of functions.\par
\indent 
Let $(\Kb ,\partial )$ be a differential field of complex functions that contains $\Cb$ as constants, and adjoin $h$ to form $\Kb (h)$, where either:\begin{enumerate}[(i)]
  \item $h=\int f$ for some $f\in \Kb$, so $\partial h=f$;
  \item $h=\exp \int f$, so $\partial h=fh$; or
  \item $h$ is algebraic over $\Kb$.
\end{enumerate} 
Then $\partial $ on $\Kb$ extends uniquely to $\partial$ on $\Kb (h)$ so $(\Kb (h) ,\partial )$ is a differential field; see \cite[p.~5]{vdPS}. 

\begin{defn} Let $\Kb_1\subseteq \Kb_2\subseteq\dots\subseteq \Kb_n$ be a sequence of differential fields with differential $\partial$ that contain the subfield $\Cb$ of constants and such that $\Kb_j$ arises from 
 $\Kb_{j-1}$ as $\Kb_j=\Kb_{j-1}(h_j)$ where $h_j$ is as in (i), (ii) or (iii) for $j=2, \dots ,n$. Then $(\Kb_j)_{j=1}^n$  is a Liouville tower, and $\Kb_n$ is said to be a Liouville extension of $\Kb_1$; see \cite[1.42]{vdPS}.  
\end{defn}

\begin{prop}\label{Drach} Suppose that $g_0(x;\zeta )=1/2+\lfloor A(\zeta I-A^2)^{-1}\rfloor_x$ and that $g_0(x, \zeta )^2$ is meromorphic as a function of $\zeta$. Then
\begin{enumerate}[(i)]
  \item the squared diagonal Green's function $G(x,x;\lambda )^2$ is meromorphic in $\lambda$; 
  \item the function $h(x)=G(x,x;\lambda )^2$ satisfies
\[ h^2{\frac{d^3h}{dx^3}}={\frac{3h}{2}}{\frac{dh}{dx}}{\frac{d^2h}{dx^2}}+4(u-\lambda )h^2{\frac{dh}{dx}}-{\frac{3}{4}}\Bigl( {\frac{dh}{dx}}\Bigr)^3+4{\frac{du}{dx}}h^3.\]
  \item Suppose that the potential $u = -4 \lfloor A \rfloor_x$ is meromorphic on some domain and let
     \[ \calU = \Cb \Bigl[\lambda, u, {\frac{du}{dx}}, {\frac{d^2u}{dx^2}}, \dots, h,{\frac{dh}{dx}},{\frac{d^2h}{dx^2}}\Bigr]. \] 
  Then $\calU$ is an integral domain. Further, $(\Kb , d/dx)$, the  field of fractions of $\calU$,  is a differential field, and the general solution of $L\psi =\lambda\psi$ is an element of some Liouville extension of $\Kb$.
\end{enumerate}
\end{prop}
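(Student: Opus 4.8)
The plan is to treat the three parts in turn, with Equation~(\ref{diagonalgreen}) and the characteristic‑matrix formula of Proposition~\ref{spectralprop}(i) as the principal inputs. For (i), I would expand $A(\zeta I-A^2)^{-1}=\sum_{k\ge 0}\zeta^{-k-1}A^{2k+1}$ as a geometric series and apply the bracket $\lfloor\,\cdot\,\rfloor_x$ term by term, so that $g_0(x;\zeta)=\tfrac12+\sum_{k\ge0}\lfloor A^{2k+1}\rfloor_x\,\zeta^{-k-1}$; setting $\zeta=-\lambda$ and comparing with~(\ref{diagonalgreen}) gives $G(x,x;\lambda)=(-\lambda)^{-1/2}g_0(x;-\lambda)$ for $\lambda<-\lambda_0$, hence $G(x,x;\lambda)^2=-\lambda^{-1}g_0(x;-\lambda)^2$, first on that half‑line and then on its domain of analytic continuation. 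Since $g_0(x;\zeta)^2$ is meromorphic in $\zeta$ by hypothesis, $G(x,x;\lambda)^2$ is meromorphic in $\lambda$.

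For (ii), I would write $P=\psi_+\psi_-$ and note that $W=\Wr(\psi_-,\psi_+)$ is constant in $x$. Using the shape of the characteristic matrix from Proposition~\ref{spectralprop}(i) --- its diagonal entries are $2P$ and $P''-2(u-\lambda)P$ and its off‑diagonal entry is $P'$ --- one records that $\det\Xi=2PP''-(P')^2-4(u-\lambda)P^2=-W^2$ is constant in $x$; dividing by $W^2$ turns this into $-2gg''+(g')^2+4(u-\lambda)g^2=1$ for the diagonal Green's function $g=G(x,x;\lambda)=P/W$. Substituting $h=g^2$ (so $(g')^2=(h')^2/4h$ and $gg''=h''/2-(h')^2/4h$) converts this to $hh''-\tfrac34(h')^2-4(u-\lambda)h^2=-h$; differentiating once, multiplying through by $h$, and using the last relation to eliminate the leftover term $hh'$ produces exactly the stated third‑order equation. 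The additive constant (equivalently $W$) drops out under differentiation, which is consistent with the resulting equation being homogeneous of degree three in $h$, so the normalization of $\psi_\pm$ is irrelevant.

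For (iii), the integral‑domain assertion is immediate once one notes that, by (i) together with the hypothesis on $u$, every generator of $\calU$ is a meromorphic function on a fixed connected domain in the variables $x$ and $\lambda$, so $\calU$ is a subring of a field. Its fraction field $\Kb$ is stable under $d/dx$ because $d\lambda/dx=0$, the $x$‑derivatives of $u$ of all orders already lie in $\calU$, $dh/dx=h'\in\calU$, $dh'/dx=h''\in\calU$, and --- crucially using part~(ii) --- $dh''/dx=h'''\in\Kb$; hence $(\Kb,d/dx)$ is a differential field with $\Cb$ among its constants. To integrate $L\psi=\lambda\psi$, i.e.\ $\psi''=(u-\lambda)\psi$, by Liouville operations over $\Kb$: since $h=P^2/W^2\in\Kb$ and $W\in\Cb$, the function $P$ is a square root of $W^2h$ and so is algebraic over $\Kb$, giving a Liouville extension $\Kb(P)$; the logarithmic derivatives $v_\pm=\psi_\pm'/\psi_\pm$ satisfy $v_++v_-=P'/P$ and $v_+-v_-=W/P$, hence lie in $\Kb(P)$, so $\psi_\pm=\exp\!\int v_\pm$ are obtained from $\Kb(P)$ by further exponential Liouville extensions (case~(ii) of the definition); since $W\neq0$ by Proposition~\ref{spectralprop}(i) the solutions $\psi_\pm$ are linearly independent, and the general solution of $L\psi=\lambda\psi$, being a $\Cb$‑linear combination of them, lies in a Liouville extension of $\Kb$.

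The bookkeeping in step~(ii) is the most calculation‑heavy part, but it is entirely routine. The point that genuinely needs care is in~(iii): one must verify that $\calU$ really embeds into a field of meromorphic functions --- that is, that $h=G(x,x;\lambda)^2$ is meromorphic in $x$ on the relevant domain, which I would deduce from the representation $h=-\lambda^{-1}g_0(x;-\lambda)^2$ with $g_0$ assembled from $(I+R_x)^{-1}$ and $e^{-xA}$, analytic in $x$ wherever $I+R_x$ is invertible (e.g.\ on the half‑line of Remark~\ref{BakerAkhiezer}) --- and one must confirm that the Wronskian constant is a genuine scalar in $\Cb$, which is exactly what Proposition~\ref{spectralprop}(i) supplies and what legitimizes treating $\Kb\subset\Kb(P)$ as an algebraic, hence Liouville, extension.
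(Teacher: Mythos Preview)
Your argument is correct and tracks the paper's proof closely. The only variation worth noting is in (ii): you start from the first integral $-2gg''+(g')^2+4(u-\lambda)g^2=1$ (the Wronskian identity via $\det\Xi$) and differentiate once after substituting $h=g^2$, whereas the paper begins from its $x$-derivative, the third-order equation $g'''=2u'g+4(u-\lambda)g'$ for $g=G(x,x;\lambda)$, and then passes to $h=g^2$; these are the same computation entered at adjacent steps. In (iii) your $\psi_\pm=\exp\int v_\pm$ with $v_\pm=\tfrac{P'}{2P}\pm\tfrac{W}{2P}$ is exactly the paper's $\psi=\sqrt{g}\,\exp\!\int(\mu/g)$ with $\mu=\pm\tfrac12$, just regrouped so that the algebraic step (adjoining $\sqrt{g}$) is absorbed into the exponential.
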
  

\begin{proof} (i)
  The proposition follows directly from \cite[Theorem 5.4]{BN}.

(ii) The diagonal Green's function is the product of solutions of $L \psi = \lambda \psi$,
hence $g(x)=G(x,x;\lambda )$ satisfies
\[ {\frac{d^3g}{dx^3}}=2{\frac{du}{dx}}g+ 4(u-\lambda ){\frac{dg}{dx}},\]
and the required differential equation follows by setting $h=g^2$ and considering $h{\frac{dh}{dx}}{\frac{d^2h}{dx^2}}$.

(iii) Since $u$ and $h$ are meromorphic, $\Cb [\lambda, u, {\frac{du}{dx}}, {\frac{d^2u}{dx^2}},\dots, h,{\frac{dh}{dx}},{\frac{d^2h}{dx^2}}]$ is an integral domain, which has a field of fractions $\Kb$. By (ii), $\Kb$ is a differential field, and we can adjoin 
$g$ with $g^2=h$ to form a differential field $\Kb [g]$, and likewise $\Kb [\sqrt{g}]$. Then for some $\mu\in \Cb$, we have 
\[ \mu^2= -2^{-1}g{\frac{d^2g}{dx^2}}+4^{-1}\Bigl({\frac{dg}{dx}}\Bigr)^2+g^2(u-\lambda ),\]
and one can verify that
\begin{equation}\label{generalsolution}
  \psi (x;\lambda )
    =\sqrt {g(x)}\exp \Bigl( \int_0^x {\frac{\mu}{g(y)}}dy \Bigr)
\end{equation}
gives a solution of $L\psi =\lambda \psi.$
\end{proof}

Proposition \ref{Drach} is related to Drach's notion of integrability as in \cite{Br}, but covers a far wider range of potentials than those usually regarded as integrable.
Indeed, we start from a domain $\Cb [\lambda, u,{\frac{du}{dx}}, {\frac{d^2u}{dx^2}}, \dots ,h,{\frac{dh}{dx}},{\frac{d^2h}{dx^2}}]$ which is not necessarily Noetherian
and the general solution (\ref{generalsolution}) is generally not meromorphic as a function of $x$; Gesztesy and Weikard discuss how these ideas are related \cite{GW}. In the next theorem, we address more standard notions of integrability, with classical geometrical interpretations. 

The diagonal Green's function contains much more information than simply the potential. Indeed, it acts as a generating function with coefficients that are differential polynomials in $u$ with universal coefficients which we will identify in the following KdV recursion.
Doubling the coefficients of the term in parentheses in (\ref{diagonalgreen}), we consider the functions $f_0=1$, $f_m=(-1)^m2\lfloor A^{2m-1}\rfloor_x$ for $m=1, 2, \dots$. The differential algebra generated by $u$ and its derivatives with respect to $x$ is $\calR=\Cb [u,{\frac{du}{dx}},{\frac{d^2u}{dx^2}},\dots ]$. Following \cite{GH}, we regard $\lambda,\zeta $ as variables, and define $F_n(\lambda )\in \calR [\lambda ]$ by
   \[ F_n(x;\lambda )=\sum_{j=0}^n f_{n-j}(x)\lambda^j .\]
These $\calR$-valued polynomials satisfy several identities consequent on the stationary KdV hierarchy. We let $Q_{2n+1}(\lambda )\in \calR [\lambda ]$ of degree $2n+1$ be the polynomial
  \begin{equation}\label{Q} 
   Q_{2n+1}(\lambda ) = {\frac{1}{2}} \Bigl( {\frac{\partial^2F_n(x;\lambda )}{\partial x^2}}\Bigr)F_n(x;\lambda )
        -{\frac{1}{4}} \Bigl( {\frac{\partial F_n(x;\lambda )}{\partial x}}\Bigr)^2-\bigl(u(x)-\lambda \bigr)F_n(x;\lambda )^2,
  \end{equation}
and $P_{2n+1}$ be the differential operator 
 of order $2n +1$ in $\partial/\partial x$ with coefficients in $\calR,$
  \begin{equation}\label{P} 
    P_{2n+1} = \sum_{j=1}^n \Bigl( f_{n-j}(x){\frac{\partial}{\partial x}}-{\frac{1}{2}}{\frac {\partial f_{n-j}(x)}{\partial x}}\Bigr) L^j.
  \end{equation}
 We now give the promised version of Burchnall--Chaundy's theorem which gives conditions for a pair of commuting differential operators to satisfy a polynomial equation and thereby determine an algebraic curve.
 
\begin{thm}\label{BurchnallChaundy} 
For $(-A,B,C)$ as in Lemma \ref{sectorial} and let $(f_j)$ be as in (\ref{P}) and suppose that there exists an integer $\ell \ge 1$ such that $\frac{\partial f_{\ell +1}}{\partial x} = 0$ but $\frac{\partial f_{\ell }} {\partial x} \ne 0$. Then 
\begin{enumerate}[(i)]
  \item the differential operators satisfy $Q_{2\ell +1}(L)=-P_{2\ell +1}^2$;
 \item The function $f(\lambda,\zeta) = \zeta^2-Q_{2\ell +1}(\lambda )$ is independent of $x$ and so $f(\lambda,\zeta) = 0$ determines a hyperelliptic curve $\Xb$ of genus $\ell$;
\item the commutative algebra of ordinary differential operators that is generated by $L$ and $P_{2\ell+1}$ is isomorphic to the algebra of regular functions on $\Xb$.
\end{enumerate}
\end{thm}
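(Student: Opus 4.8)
The plan is to extract the stationary Korteweg--de Vries recursion from the expansion (\ref{diagonalgreen}) of the diagonal Green's function, to use it together with the hypothesis $\partial f_{\ell+1}/\partial x=0$ in order to show that the polynomial $F_\ell(x;\lambda)$ solves, for each fixed $\lambda$, the third-order ``symmetric square'' equation satisfied by products of solutions of $L\psi=\lambda\psi$, and then to read off (i)--(iii) by elementary manipulations with $L$ and $P_{2\ell+1}$ acting on that two-dimensional solution space.

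First I would derive the recursion. Writing $2\sqrt{-\lambda}\,G(x,x;\lambda)=\sum_{m\ge0}f_m(x)\lambda^{-m}$ from (\ref{diagonalgreen}) with the coefficients doubled as in the text (so $f_0=1$ and $f_1=u/2$), and using that $G(x,x;\lambda)$, being a product of solutions of $L\psi=\lambda\psi$, satisfies $g'''=2u'g+4(u-\lambda)g'$ (as in the proof of Proposition~\ref{Drach}), one compares coefficients of $\lambda^{-m}$, the factor $\sqrt{-\lambda}$ being $x$-independent, to obtain
\[
  \frac{\partial f_{m+1}}{\partial x}=-\frac14\frac{\partial^{3}f_m}{\partial x^{3}}+u\,\frac{\partial f_m}{\partial x}+\frac12\frac{\partial u}{\partial x}\,f_m\qquad(m\ge0).
\]
Since $F_\ell$ is $\lambda^\ell$ times the truncation of $\sum_m f_m\lambda^{-m}$ to non-negative powers of $\lambda$, substituting $F_\ell=\lambda^\ell(\sum_m f_m\lambda^{-m}-\text{tail})$ into the same third-order equation and telescoping the tail with the recursion yields the key identity
\[
  \frac{\partial^{3}F_\ell}{\partial x^{3}}=2\frac{\partial u}{\partial x}\,F_\ell+4(u-\lambda)\frac{\partial F_\ell}{\partial x}-4\frac{\partial f_{\ell+1}}{\partial x}.
\]
Under the hypothesis the last term drops, so $F_\ell$ solves $h'''=2u'h+4(u-\lambda)h'$. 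As $\frac{d}{dx}\bigl[\tfrac12 h''h-\tfrac14(h')^{2}-(u-\lambda)h^{2}\bigr]=\tfrac12 h\bigl[h'''-2u'h-4(u-\lambda)h'\bigr]$, the quantity $Q_{2\ell+1}(\lambda)=\tfrac12 F_\ell''F_\ell-\tfrac14(F_\ell')^{2}-(u-\lambda)F_\ell^{2}$ of (\ref{Q}) is independent of $x$; comparing top powers of $\lambda$ (with $F_\ell=\lambda^\ell+O(\lambda^{\ell-1})$ since $f_0=1$) shows $Q_{2\ell+1}$ is a monic polynomial in $\lambda$ of degree $2\ell+1$. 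This is (ii): $f(\lambda,\zeta)=\zeta^{2}-Q_{2\ell+1}(\lambda)$ does not involve $x$ and $\{\zeta^{2}=Q_{2\ell+1}(\lambda)\}$ defines a hyperelliptic curve $\Xb$ whose smooth model has genus $\ell$, the roots of $Q_{2\ell+1}$ being the $2\ell+1$ band edges $\lambda_0,\alpha_j,\beta_j$ appearing in (\ref{gaps}) and hence simple.

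For (i), I would compute the action of $L$ and $P_{2\ell+1}$ on the solutions of $L\psi=\lambda\psi$. From (\ref{P}), replacing each $L^{j}$ by $\lambda^{j}$ shows $P_{2\ell+1}\psi=F_\ell\psi'-\tfrac12 F_\ell'\psi$ on such $\psi$. Differentiating once and using $\psi''=(u-\lambda)\psi$ gives $[L,P_{2\ell+1}]\psi=\tfrac12\bigl[F_\ell'''-4(u-\lambda)F_\ell'-2u'F_\ell\bigr]\psi=-2(\partial f_{\ell+1}/\partial x)\psi=0$; a differential operator of finite order annihilating this infinite family of linearly independent functions must vanish, so $L$ and $P_{2\ell+1}$ commute. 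Applying the action formula twice and using $\psi''=(u-\lambda)\psi$ again gives $P_{2\ell+1}^{2}\psi=\bigl[(u-\lambda)F_\ell^{2}-\tfrac12 F_\ell F_\ell''+\tfrac14(F_\ell')^{2}\bigr]\psi=-Q_{2\ell+1}(\lambda)\psi=-Q_{2\ell+1}(L)\psi$. Hence $P_{2\ell+1}^{2}+Q_{2\ell+1}(L)$ annihilates every solution of $L\psi=\lambda\psi$ for every $\lambda$, and since there are arbitrarily many linearly independent such functions, it is the zero operator, which is (i).

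For (iii), let $\calB$ be the algebra of differential operators generated by $L$ and $P_{2\ell+1}$, commutative by the above. By (i), $P_{2\ell+1}^{2}$ is a polynomial in $L$, so $\calB$ is spanned by $\{L^{a}P_{2\ell+1}^{b}:a\ge0,\ b\in\{0,1\}\}$ and the surjection $\Cb[\lambda,\zeta]\to\calB$, $\lambda\mapsto L$, $\zeta\mapsto P_{2\ell+1}$, factors through $\Cb[\lambda,\zeta]/(\zeta^{2}-Q_{2\ell+1}(\lambda))=\Cb[\Xb]$. Injectivity follows by an order count: $P_{2\ell+1}$ has order exactly $2\ell+1$ (leading coefficient $(-1)^{\ell}$), so the operators $L^{a}P_{2\ell+1}^{b}$ have pairwise distinct orders $2a$ and $2a+2\ell+1$, whence any nontrivial finite $\Cb$-combination of them has a surviving top-order term and is nonzero. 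Thus $\{L^{a}P_{2\ell+1}^{b}\}$ is a basis of $\calB$ matching the monomial basis of $\Cb[\Xb]$ and the map is an algebra isomorphism. I expect the main obstacle to be the bookkeeping in the first step --- in particular the telescoped identity for $\partial^{3}F_\ell/\partial x^{3}$ that isolates the single term $\partial f_{\ell+1}/\partial x$ --- after which the remaining steps are formal; a secondary point is that the genus is exactly $\ell$ rather than merely $\le\ell$, which rests on the simple-root property of $Q_{2\ell+1}$ and hence on the finite-gap spectral structure surrounding (\ref{gaps}).
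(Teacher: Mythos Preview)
Your argument is correct and, in fact, considerably more explicit than the paper's. The paper's proof is largely a chain of citations: it invokes \cite[Proposition 5.2]{BN} for the KdV recursion, \cite[Theorem 1.3 and p.~7]{GH} for the operator identity $Q_{2\ell+1}(L)=-P_{2\ell+1}^{2}$ and the $x$-independence of $Q_{2\ell+1}$, and then the classical Burchnall--Chaundy theory via \cite{BC,W,Mum2,PRZ} for (iii). You instead work everything out by hand: you extract the recursion directly from the third-order equation for the diagonal Green's function, verify the telescoped identity $F_\ell'''-4(u-\lambda)F_\ell'-2u'F_\ell=-4\,\partial_x f_{\ell+1}$, compute $[L,P_{2\ell+1}]$ and $P_{2\ell+1}^{2}$ on $L$-eigenfunctions, and finish (iii) with a clean order-parity count showing that $\{L^{a}P_{2\ell+1}^{b}:a\ge0,\,b\in\{0,1\}\}$ has pairwise distinct orders and is therefore linearly independent. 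This buys you a self-contained proof that does not rely on the Gesztesy--Holden machinery or on Schur's theorem about commutants; the paper's route buys a connection to the broader rank-one theory of $\calA_L$ and to the algebro-geometric picture in \cite{Mum2}.

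Two small points. First, the sum in (\ref{P}) as printed starts at $j=1$, whereas your formula $P_{2\ell+1}\psi=F_\ell\psi'-\tfrac12F_\ell'\psi$ corresponds to the sum starting at $j=0$; the latter is the standard Gesztesy--Holden normalisation and is what makes all your identities match, so you are implicitly correcting a typo in the paper. Second, you rightly flag that ``genus exactly $\ell$'' requires the $2\ell+1$ roots of $Q_{2\ell+1}$ to be simple; the paper does not prove this either, merely citing \cite{GH}, so your caveat is appropriate rather than a gap relative to the original.
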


\begin{proof} (i) In \cite[Proposition 5.2]{BN}, it is shown that the sequence $(f_m)_{m=0}^\infty$ lies in $\calR$ and satisfies the recursion relation of the stationary KdV hierarchy. By \cite[Remark 1.5]{GH}, there exists a solution of the stationary KdV hierarchy that terminates in the sense there is a solution $(f_m)_{m=0}^\infty $ with only finitely many nonzero $f_m$, $m=0,1, \dots ,\ell$; hence $L$ commutes with $P_{2\ell +1}$. Note that 
$\calR=\Cb [u,{\frac{\partial u}{\partial x}},{\frac{\partial^2u}{\partial x^2}},\dots , {\frac{\partial^{2\ell} u}{\partial x^{2\ell }}}]$, giving a Noetherian differential ring.
On substituting $L$ for $\lambda$ in $Q_{2\ell +1}(\lambda )$, one obtains a differential operator $Q_{2\ell +1}(L)$ of order $4\ell +2$. The result now follows as in the proof of \cite[Theorem 1.3]{GH}.

(ii) Let $f(\lambda, \zeta)=\zeta^2-Q_{2\ell +1}(\lambda)$, which independent of $x$ by the hypothesis on $f_{\ell+1}$ and \cite[p.~7]{GH}. Also, $f(\lambda, \zeta )$ is irreducible in $\Cb [\lambda, \zeta ]$.

(iii) Let $\calA_L$ be the set of differential operators that commute with $L$, so $\calA_L$ is a commutative algebra by a result of Schur. We have seen that 
$P_{2\ell +1}\in \calA_L$. This implies that the greatest common divisor of the orders of the operators in $\calA_L$ is one, and so $\calA_L$ has rank one \cite[p.~182]{W}. Let ${\hbox{Specm}}(\calA_L)$ be the space of maximal ideals of $\calA_L$ which is a subset of the space ${\hbox{Spec}}(\calA_L)$ of prime ideals. Burchnall and Chaundy \cite{BC} showed that ${\hbox{Specm}}(\calA_L )$ is an irreducible algebraic curve that can be completed by adding one smooth point at infinity; see \cite[p.~182]{W} and \cite[p.~132]{Mum2} for a modern presentation of this result. 

Then $\Cb [L, P_{2\ell +1}]$ is isomorphic to $\Cb [\lambda, \zeta]/(f(\lambda, \zeta ))$, namely the algebra generated by the coordinate functions on the hyperelliptic curve $\Xb$; see  \cite[Remark 2.8]{PRZ}.  One can show that $\Cb [L, P_{2\ell +1}]$ is a complex subalgebra of $\calA_L$ such that the  dimension of the quotient vector space $\calA_L/\Cb [L, P_{2\ell +1}]$ is finite; see \cite[section 2]{Mul}.
We interpret $\Xb$ as sheets covering $\Cb$. There is an inclusion $\Cb [L]\rightarrow \calA_L$, hence a surjective map of prime ideals ${\hbox{Spec}}(\calA_L)\rightarrow {\hbox{Spec}}(\Cb [L])$ given by $P\mapsto P\cap \Cb [L]$; see \cite[p.~223]{Sha}. Thus a spectral point $\lambda\in \Cb$ gives the prime ideal generated by $L - \lambda I$ in $\Cb [L]$ which is covered by a prime ideal of $\calA_L$.
\end{proof}

\begin{rem} In \cite[Theorem 3.1]{Br}, Brezhnev shows that if the system $(-A,B,C)$ satisfies Theorem~\ref{BurchnallChaundy} then the corresponding $L$ is integrable, in the sense that $L\psi=\lambda \psi$ can be solved explicitly by Liouville integration.
\end{rem}

The curve $\Xb$ is hyperelliptic, so there exists a meromorphic $\wp:\Xb \rightarrow\Cb_\infty$ that is typically $2:1$, with branch points at $\{ p\in \Xb: \wp'(p)=0\}$; see \cite[3.155]{Mum}. We introduce the real points on $\Xb$ by $\Xb_r=\{ (\lambda , \zeta )\in \Xb: \lambda ,\zeta \in \Rb\}$, which typically gives a disconnected subset which projects to $\{ \lambda \in \Rb: Q_{2\ell+1}(\lambda )>0\}$ via $(\lambda, \zeta )\mapsto \lambda$. 

Suppose that $u$ is real, and consider how $\Xb_r$ and $\sigma (L)$ are related. In \cite[Proposition 3.2]{BN} we provide the Baker--Akhiezer function $\psi_\lambda$ such that $L\psi_\lambda =\lambda\psi_\lambda$, so $\{ \lambda\in \Rb : \psi_\lambda \in L^\infty ((0, \infty ); \Cb )\}$ is the set of approximate eigenvalues, contained in the $L^2$ spectrum of $L$.
Then ${\hbox{int}}\{ \lambda\in \Rb: \psi_\lambda\in L^\infty ((0, \infty ): \Cb )\}$ is contained in the essential spectrum $\sigma_{ess}(L)$. The function $-G(x,x;\lambda )$ is holomorphic on $\Cb\setminus \sigma (L)$. For real potentials that also satisfy Theorem \ref{BurchnallChaundy}, we have $\wp :\Xb\rightarrow \Cb_\infty $ restricting to $\wp :\Xb_r\rightarrow \sigma_{ess}(L)$.  

Let $\Xb_0$ be an open subset of $\Xb$ such that the boundary $\partial \Xb_0$ of $\Xb_0$ is a finite union of simple analytic curves. Let $w_0\in\Xb_0$, and let $\nu_0$ be the harmonic measure on $\partial\Xb_0$ with respect to $w_0$. Then we introduce the complex unital algebra $\calR_0$ of meromorphic functions on $\Xb$ that are holomorphic on $\Xb_0\cup\partial\Xb_0$. We let $H=L^2(\nu_0;\Cb )$ and regard $\calR_0$ as an algebra of multiplication operators on $H$; then we let $H^2$ be the closure of $\{ f: f\in \calR_0\}$ in $H$, so $H^2$ is a Hilbert module over $\calR_0.$ Let $R:L^2\rightarrow H^2$ be the orthogonal projection; then we write $T_f\in\calL (H^2)$ for the operator $T_f:h\mapsto fh$ for $h\in H^2$ and $T_f^\dagger\in \calL (H^2)$ for the adjoint. We also introduce $\Gamma_f:L^2\ominus H^2\rightarrow H^2$ as $\Gamma_f=RM_f(I-R)$ where $M_f:L^2\rightarrow L^2$ is the multiplication operator $M_f:h\rightarrow fh$. For $f\in \calR_0$, this $M_f$ has the block form
\begin{equation} \label{Toeplitzblock2}M_f=\begin{bmatrix} T_{f}&\Gamma_{f}\\ 0& \tilde T_{f}\end{bmatrix}\qquad \begin{matrix} H^2\\ L^2\ominus H^2\end{matrix}\end{equation}
which may be contrasted with (\ref{Toeplitzblock}).

\begin{cor}\label{hyperprincipal}\begin{enumerate}[(i)]

\item Let $\calE_0$ be the $\ast$ algebra of operators on $H^2$ that is generated by $\{ T_f, T_f^\dagger: f\in\calR_0\}$ and $\calL^1(H^2)$; then there is an exact sequence of complex algebras and homomorphisms 
\begin{equation}0\longrightarrow \calL^1(H^2)\longrightarrow\calE_0\longrightarrow \calC\longrightarrow 0\end{equation}
where $\calC$ is a commutative and unital algebra.
\item For $f\in\calR_0$, let $T_f=X+iY$ where $X,Y\in \calL (H^2)$ are self-adjoint. Then there exists a nonnegative, compactly supported and integrable function $P_f:\Rb^2\rightarrow \Rb$ such that
\begin{equation}\trace \bigl([ p(X,Y), q(X,Y)]\bigr)={\frac{1}{2\pi i}}\iint_{\Rb^2}{\frac{\partial (p,q)}{\partial (x,y)}} P_f(x,y)\,dxdy\end{equation}
for all polynomials $p(x,y), q(x,y)\in \Cb [x,y]$. 
\item $\Gamma_f\in \calL^2(L^2\ominus H^2, H^2)$ for all $f\in \calR_0$.
\end{enumerate}
\end{cor}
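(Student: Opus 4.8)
The plan is to treat Corollary~\ref{hyperprincipal} as the analogue, for the Hardy space $H^2$ of the region $\Xb_0$ on the hyperelliptic curve $\Xb$, of Proposition~\ref{Toeplitzcocycle} and Theorem~\ref{Lyapunovthm}, the one genuinely new ingredient being the Hilbert--Schmidt estimate (iii). First I would record the structural facts that do all the work. Since $\calR_0$ is an algebra of functions and $H^2=\overline{\calR_0}$ in $L^2(\nu_0)$, each multiplication operator $M_f$ with $f\in\calR_0$ leaves $H^2$ invariant; hence $M_f$ has the block form (\ref{Toeplitzblock2}), $T_f=M_f|_{H^2}$, and $f\mapsto T_f$ is a \emph{unital algebra homomorphism} of the commutative algebra $\calR_0$, so the $T_f$ ($f\in\calR_0$) commute and $T_f^\dagger=T_{\bar f}$. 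Moreover $M_{\bar f}=M_f^\dagger$ leaves $L^2\ominus H^2$ invariant, so $RM_{\bar f}(I-R)=0$; together with the block computation behind Proposition~\ref{Toeplitzcocycle} (the semicommutator identity $T_{fg}-T_fT_g=\Gamma_f\Gamma_{\bar g}^\dagger$), this gives, for $f,g\in\calR_0$,
\begin{equation*}
  [T_f,T_g^\dagger]=-\Gamma_f\Gamma_g^\dagger,\qquad [T_f^\dagger,T_f]=\Gamma_f\Gamma_f^\dagger\ge 0,\qquad [T_f^\dagger,T_g^\dagger]=0.
\end{equation*}

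Granting (iii), part~(i) is then immediate: $\Gamma_f\Gamma_g^\dagger$ is a product of Hilbert--Schmidt operators, hence trace class, so every commutator of generators of $\calE_0$ lies in $\calL^1(H^2)$; therefore $[\calE_0,\calE_0]\subseteq\calL^1(H^2)$ and the quotient $\calC=\calE_0/\calL^1(H^2)$ is commutative, unital because $1\in\calR_0$ yields $T_1=I$, and the displayed exact sequence follows since $\calL^1(H^2)$ is an ideal of $\calL(H^2)$ contained in $\calE_0$. For part~(ii), write $Z=T_f=X+iY$ with $X=\Re T_f$, $Y=\Im T_f$ self-adjoint; by the above its self-commutator $[Z^\dagger,Z]=\Gamma_f\Gamma_f^\dagger$ is nonnegative and trace class, so $Z$ is almost normal (indeed subnormal, being $M_f|_{H^2}$). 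I would then invoke Lemma~\ref{Principal}(i) to produce a real, compactly supported, integrable $P_f$ on $\Rb^2$, supported on $\sigma(X)\times\sigma(Y)$, with the multiplicative-commutator determinant formula, and expand that formula exactly as in the proof of Proposition~\ref{cryptotrace}(i) to obtain the stated trace formula for all polynomials $p,q$. Nonnegativity of $P_f$ is the assertion that the principal function of the hyponormal --- in fact subnormal --- operator $T_f$ is nonnegative, which holds by the theory of \cite{CP},\cite{Pi}; concretely $P_f(w)$ counts with multiplicity the solutions of $f=w$ in $\Xb_0$, consistent with $\iint_{\Rb^2}P_f=\trace(\Gamma_f\Gamma_f^\dagger)=\Vert\Gamma_f\Vert^2_{\calL^2}$.

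The hard part is (iii). I would start from $\Vert\Gamma_f\Vert^2_{\calL^2}=\trace(\Gamma_f\Gamma_f^\dagger)=\trace(T_{|f|^2}-T_fT_f^\dagger)$ and prove that this trace is finite by establishing a Douglas--Dirichlet identity: up to a universal constant, $\Vert\Gamma_f\Vert^2_{\calL^2}$ equals the Dirichlet energy $\int_{\Xb_0}|\partial f|^2$ taken with respect to the conformal structure of $\Xb$, equivalently the Dirichlet seminorm of $f|_{\partial\Xb_0}$ in the sense of Definition~\ref{Dirichletalgebra}(iii). Since $\partial\Xb_0$ is a finite union of analytic curves and $f\in\calR_0$ has no poles on the compact set $\Xb_0\cup\partial\Xb_0$, $f$ is holomorphic on an open neighbourhood of $\overline{\Xb_0}$, so $|\partial f|$ is bounded there and, $\Xb_0$ having finite area, the Dirichlet integral is finite. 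The technical obstacle I anticipate is justifying the Dirichlet identity when the underlying measure is harmonic measure $\nu_0$ on a finitely connected region of a Riemann surface rather than arclength on $\partial\Db$; I would handle this by conformal invariance of the Dirichlet energy together with a partition of unity subordinate to a finite cover of $\partial\Xb_0$ by conformal coordinate charts, reducing on each chart to the planar estimate underlying Proposition~\ref{Toeplitzcocycle}(i) --- alternatively, observing that $f|_{\partial\Xb_0}$ is real-analytic and hence lies in the relevant Dirichlet algebra, and quoting the boundedness of $h\mapsto\Gamma_h$ into $\calL^2$ from that algebra as in the same proposition. Either route gives $\Gamma_f\in\calL^2(L^2\ominus H^2,H^2)$, which is (iii).
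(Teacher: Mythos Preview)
Your logical structure is sound and the commutator identities you derive from the block form are correct, but you have inverted the order of difficulty relative to the paper and thereby made (iii) the hard step when in fact it is the easiest. The paper proceeds (i)$\Rightarrow$(iii), not (iii)$\Rightarrow$(i): for (i) it simply invokes \cite[Theorem~5]{DY} (the multivariable Berger--Shaw theorem of Douglas--Yan) to get $[T_{f_0}^\dagger,T_{f_1}]\in\calL^1(H^2)$ directly, and then (iii) is immediate from the block identity $\Gamma_f\Gamma_f^\dagger=[T_f^\dagger,T_f]$ that you yourself wrote down --- a trace-class positive operator forces $\Gamma_f$ to be Hilbert--Schmidt, with $\Vert\Gamma_f\Vert_{\calL^2}^2=\trace[T_f^\dagger,T_f]=\pi^{-1}\iint P_f$. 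Your plan to establish (iii) first, by proving a Douglas--Dirichlet identity on $\partial\Xb_0$ via partitions of unity and local conformal charts, is essentially a sketch of a proof of the Douglas--Yan theorem itself; it can be made to work, but it is a substantial detour compared to quoting the result. The gain from your route would be a self-contained argument and the explicit identification of $\Vert\Gamma_f\Vert_{\calL^2}^2$ with a Dirichlet energy; the cost is that the harmonic-measure-versus-arclength issue you flag is real and not resolved by the alternative you suggest, since Proposition~\ref{Toeplitzcocycle} is stated only for $\partial\Db$. One point where your write-up is actually more careful than the paper's: you note that nonnegativity of $P_f$ in (ii) follows from hyponormality of $T_f$ (since $[T_f^\dagger,T_f]=\Gamma_f\Gamma_f^\dagger\ge 0$), which is needed because Lemma~\ref{Principal}(i) alone only gives $P$ real.
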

\begin{proof} (i) By (\ref{Toeplitzblock2}), the map $f\mapsto T_f$ is a unital algebra homomorphism $\calR_0\rightarrow \calL (H^2)$. By \cite[Theorem 5]{DY}, we have $[T_{f_0}^\dagger, T_{f_1}]\in \calL^1(H^2)$ for all $f_0,f_1\in\calR_0$. Then $[\calE_0,\calE_0]\subset \calL^1(H^2)$, so the quotient algebra $\calE_0/\calL^1(H^2)$ is a commutative and unital complex algebra.\par
(ii) In particular, for $f\in\calR_0$, the operator $T_f$ is almost normal with $T_f=X+iY$ with self-adjoint $X,Y\in \calL (H^2)$ such that $2i[X,Y]=[T_f^\dagger ,T_f]$. Thus from Lemma \ref{Principal}, we obtain a trace formula on the $\ast$ algebra generated by $T_f$.\par 
(iii) We have $M_fM_f^\dagger -M_f^\dagger M_f=0$, so from (\ref{Toeplitzblock2}), we have $\Gamma_f\Gamma_f^\dagger =T_f^\dagger T_f-T_fT_f^\dagger$ is trace class; hence $\Gamma_f$ is Hilbert-Schmidt. To evaluate the squared norm, we apply the trace formula with $p(x,y)=x-iy$ and $q(x,y)=x+iy$, 
\begin{align}\Vert \Gamma_f\Vert^2_{\calL^2}&=\trace \bigl( [T_f^\dagger ,T_f]\bigr)\nonumber\\
&={\frac{1}{\pi}}\iint_{\Rb^2} P_f(x,y) \, dxdy.\end{align}
Compare Proposition \ref{cryptotrace}.
\end {proof}

Computing the principal function $P_f$ is a challenge. Under the following circumstances, one can be more explicit about the elements of $\calR_0$.

\begin{ex} As in (\ref{gaps}), suppose that $\sigma_0 =[c_1, d_1]\cup\dots \cup [c_n, \infty )$ is a finite union of disjoint real intervals. Then there is a linear fractional transformation taking $\sigma_0\cup \{ \infty \}$ bijectively to $\sigma =\cup_{j=1}^n[a_j,b_j]$, where $[a_j,b_j]$ are disjoint real intervals for $j=1, \dots, n$, which are contained in a bounded open disc $\Db (0,r)$ centered at the origin. For the domain $\Db (0,r)\setminus \sigma$, one can construct the hydrodynamic Green's function by procedures described in detail in \cite{CM}, and the harmonic measure is given by its boundary derivative.  Let $\nu_\sigma$ be the harmonic measure of $\sigma$ at infinity in $\Cb\cup \{ \infty\}$, and suppose further that $\nu_\sigma ([a_j,b_j])$ is rational for all $j=1, \dots, n$. Then by \cite[Lemma 2.2]{To} there exists a holomorphic function $h: \Cb\setminus \sigma\rightarrow\Cb$ such that $\vert h(x)\vert =1$ for all $x\in\sigma$ and $h(x)\in (-\infty ,-1)\cup (1, \infty )$ for all $x\in \Rb\setminus \sigma$; indeed, we can choose a positive integer $N$ such that $N\nu_\Omega ([a_j,b_j])$ is an integer for $j=1, \dots, n$, and define 
   \begin{equation}
   h(z)=\exp N\Bigl( \int_\sigma\log (z-t) \nu_\sigma (dt)-\log {\hbox{cap}}(\sigma )\Bigr)
   \end{equation}
where ${\hbox{cap}} (\sigma )$ is the logarithmic capacity of $\sigma$. Then $g(z)=2^{-1}(h(z)+h(z)^{-1})$ is a polynomial such that  $h(z)^2-2g(z)h(z)+1=0$, and
$\sigma =\{ x\in \Rb : -1\leq g(x)\leq 1\}$; then $\{ (x, \pm \Im h(x)):x\in [a_j,b_j]\}$ describes an oval. Note that $\Xb=\{ (z,w): w^2-2g(z)w+1=0\}$ gives an algebraic curve which we identify with a compact Riemann surface and an algebraic function as in \cite[page 250]{FK}. As 
 in \cite[IV.11]{FK} and \cite[3.155]{Mum}, we suppose that $\wp$ is a meromorphic function on $\Xb$ such that $(\wp ,\wp')$ gives a point on the curve, and $\wp$ is holomorphic on the closure of $\Xb_0$, where $\Xb_0 =\{ \zeta \in \Xb: \wp (\zeta )\in \Db (0,r)\setminus \sigma \}$. Then we  choose $w_0\in \Xb_0$ and form the Hardy space $H^2$ for the harmonic measure $\nu_0$ of $\partial \Xb_0$ with respect to $w_0$. Then $Z=T_\wp$ and $W=T_{\wp'}$ are commuting linear operators.
\end{ex} 
\textbf{Acknowledgement} The first author thanks Jonathan Arazy for an illuminating discussion on this topic several years ago.


\end{document}